\patchcmd{\subsection}{-.5em}{.5em}{}{}
\patchcmd{\subsubsection}{-.5em}{.5em}{}{}
\numberwithin{equation}{section}
\newcommand{\SL}{\operatorname{SL}}
\newcommand{\cA}{\mathcal{A}}
\newcommand{\cC}{\mathcal{C}}
\newcommand{\cS}{\mathcal{S}}
\newcommand{\cW}{\mathcal{W}}
\newcommand{\bC}{\mathbb{C}}
\newcommand{\bN}{\mathbb{N}}
\newcommand{\bQ}{\mathbb{Q}}
\newcommand{\bR}{\mathbb{R}}
\newcommand{\bZ}{\mathbb{Z}}
\newcommand{\gog}{\mathfrak{g}}
\newcommand{\gol}{\mathfrak{l}}
\newcommand{\gos}{\mathfrak{s}}
\newcommand{\ra}{\rightarrow}
\newcommand{\qand}{\quad \textrm{and} \quad}
\newcommand\subsetsim{\mathrel{%
\ooalign{\raise0.2ex\hbox{$\subset$}\cr\hidewidth\raise-0.8ex\hbox{\scalebox{0.9}{$\sim$}}\hidewidth\cr}}}
\newcommand{\eps}{\varepsilon}
\DeclareMathOperator{\dist}{dist}
\DeclareMathOperator{\Lip}{Lip}
\DeclareMathOperator{\supp}{supp}
\DeclareMathOperator{\Diag}{Diag}
\DeclareMathOperator{\Mat}{Mat}
\DeclareMathOperator{\id}{id}
\DeclareMathOperator{\Poi}{Poi}
\DeclareMathOperator{\Wei}{Wei}
\DeclareMathOperator{\Vol}{Vol}
\newcommand{\bd}{\underline{d}}
\definecolor{lichtgrijs}{gray}{0.95}
\theoremstyle{theorem}
\newtheorem{theorem}{Theorem}[section]
\newtheorem{proposition}[theorem]{Proposition}
\newtheorem{problem}[theorem]{Problem}
\newtheorem{example}[theorem]{Example}
\newtheorem{lemma}[theorem]{Lemma}
\theoremstyle{definition}
\newtheorem{definition}[theorem]{Definition}
\newtheorem{remark}[theorem]{Remark}
\begin{document}
\bibliographystyle{plain} 

\title[Poisson and Weibull asymptotics]{Poisson approximation and Weibull asymptotics in the geometry of numbers}

\author{Michael Bj\"orklund}
\address{Mathematical Sciences,  Chalmers,  Chalmers Tv\"argata 3, 412 58 Gothenburg,  Sweden}
\email{micbjo@chalmers.se}

\author{Alexander Gorodnik}
\address{Department of Mathematics,  University of Z\"urich,  Winterthurerstrasse 190,  CH-8057,  Switzerland}
\email{alexander.gorodnik@math.uzh.ch}
\thanks{MB was supported by GoCas Young Excellence grant 11423310 and Swedish VR-grant 11253320,  AG was supported by the SNF grant 200021--182089.}

\keywords{Quantitative equidistribution,  multiple mixing,  Poisson approximation,  Weibull asymptotics}

\subjclass[2010]{Primary: 37A50; Secondary: 37A25,  60G70 }

\begin{abstract}
Minkowski's First Theorem and Dirichlet's Approximation Theorem provide upper bounds on certain minima taken over lattice points contained in domains of Euclidean spaces.
We study the distribution of such minima and show,  under some technical conditions,  that they exhibit 
Weibull asymptotics with respect to different natural measures on the space of unimodular lattices in 
$\bR^d$.  This follows from very general Poisson approximation results for shrinking targets which should be of independent interest. 
Furthermore,  we show in the appendix that the logarithm laws of Kleinbock-Margulis,  Khinchin and Gallagher can be deduced from our distributional results.
\end{abstract}

\maketitle

\section{Introduction}

In this paper we consider several extremal problems in Geometry of Numbers such as the existence of lattice points in convex bodies,  minimal values of polynomial maps and 
Diophantine approximation.  Our first results deal with generic unimodular lattices,  for which we use the following notation:
\begin{align*}
X_d &= \textrm{the space of lattices in $\bR^d$ with covolume one},  \\
\mu_d &= \textrm{the unique $\SL_d(\bR)$-invariant probability measure on $X_d$. }
\end{align*}
We write $\Vol_d$ for the Lebesgue measure on $\bR^d$,  normalized so that $\Vol_d\big([0,1]^d\big) = 1$.

\subsection {Application I: Minkowski's Theorem}

Let $B \subset \bR^d$ be a compact, convex, and symmetric set with non-empty interior and let 
$\Lambda$ be a lattice in $\bR^d$ of covolume one.  The Minkowski's First Theorem (\cite[Lecture II,  Theorem 11]{SiegelBook}) says that
\[
\Vol_d(B) \geq 2^d \implies \Lambda\cap B\ne 0.
\] 
In order to discuss whether this result can be improved,  it is natural to introduce the function
\begin{equation}\label{eq:cB}
c_B(\Lambda) := \inf\{ c > 0 \,  : \,  \Lambda \cap cB \neq 0 \big\},  \quad \textrm{for $\Lambda \in X_d$}.  
\end{equation}
Let
\begin{equation}
\label{eq:T}
\mathscr{T}_{d}:=\big\{ (T_1,\ldots,T_{d})\in \bR_+^d:\, T_1\cdots T_d=1\big\},
\end{equation}
and for $T\in \mathscr{T}_{d}$, 
we consider the boxes
\[
B_T := \prod_{p=1}^d [-T_p,T_p] \subset \bR^d.
\]
Since $\hbox{vol}(B_T)=2^d$, it follows from Minkowski's First Theorem that 
$c_{B_T}(\Lambda) \leq 1.$
Furthermore,  it is easy to see that 
\begin{equation}
\label{eq:Mink0}
\sup \big\{ c_{B_T}(\Lambda) \,  : \,  \Lambda \in X_d \big\} = 1,
\end{equation}
whence Minkowski's First Theorem is in some sense optimal.  However,  we can significantly improve it
for generic lattices:

\begin{theorem}\label{th:l1}
For $\mu_d$-almost every $\Lambda\in X_d$,
\begin{equation}
\label{eq:Mink}
\limsup_{\|T\|_\infty\to\infty}\; -\frac{\log c_{B_T}(\Lambda) }{\log\log \|T\|_\infty}\ge \frac{d-1}{d}.
\end{equation}
\end{theorem}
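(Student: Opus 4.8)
The plan is to recast \eqref{eq:Mink} as a shrinking-target statement for the full diagonal subgroup of $\SL_d(\bR)$ acting on $(X_d,\mu_d)$, and then to run a dynamical Borel--Cantelli argument whose quasi-independence input is supplied by the Poisson approximation machinery of the paper.

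First I would reformulate. Let $A=\{\Diag(e^{s_1},\dots,e^{s_d}):\ s_1+\dots+s_d=0\}\cong\bR^{d-1}$, and for $T\in\mathscr{T}_d$ set $a_T=\Diag(T_1^{-1},\dots,T_d^{-1})\in A$. For $c>0$ let
\[
E_c:=\big\{\Lambda'\in X_d:\ \Lambda'\cap c[-1,1]^d\ne 0\big\}.
\]
Unwinding the definitions, $\Lambda\cap cB_T\ne 0$ if and only if $a_T\Lambda\in E_c$; since the infimum in \eqref{eq:cB} is attained whenever finite, this gives $c_{B_T}(\Lambda)\le c\iff a_T\Lambda\in E_c$. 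Moreover $\|a_T^{-1}\|=\|T\|_\infty$ and $\log\|T\|_\infty\asymp_d\dist(a_T,\id)$, the distance being measured in $A\cong\bR^{d-1}$. Siegel's mean value formula gives $\mu_d(E_c)\le\Vol_d(c[-1,1]^d)=(2c)^d$, and Rogers' second-moment formula gives the matching lower bound $\mu_d(E_c)\gg_d c^d$ for all small $c>0$. Thus \eqref{eq:Mink} is equivalent to: for $\mu_d$-a.e.\ $\Lambda$ and every $\eta>0$ there are $a\in A$ with $\|a^{-1}\|\to\infty$ and $a\Lambda\in E_{c(a)}$, where $c(a):=(\log\|a^{-1}\|)^{-(d-1)/d+\eta}$.

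Next I would build a divergent family of targets. Fix $\eta>0$ and put $R_k=2^k$; in the annulus $\{a\in A:\ R_{k-1}<\dist(a,\id)\le R_k\}$ pick a maximal $1$-separated set $\cN_k$, so $\#\cN_k\asymp_d R_k^{d-1}$ by the dimension count. For $a\in\cN_k$ one has $c(a)\asymp R_k^{-(d-1)/d+\eta}$, hence $\mu_d(E_{c(a)})\asymp R_k^{-(d-1)+d\eta}$, and therefore
\[
\sum_{k}\sum_{a\in\cN_k}\mu_d(E_{c(a)})\ \asymp\ \sum_k R_k^{d-1}R_k^{-(d-1)+d\eta}\ =\ \sum_k 2^{kd\eta}\ =\ \infty .
\]
The diagonal $A$-action on $(X_d,\mu_d)$ is exponentially mixing of all orders, and the Siegel transforms of the indicators $\mathbf 1_{c[-1,1]^d}$ are sufficiently regular (after truncating the cusp) that the data $\big((X_d,\mu_d,A),\{E_{c(a)}\}\big)$ fall within the scope of the paper's Poisson approximation theorem for shrinking targets. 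Applying that theorem to the net $\bigcup_k\cN_k$ yields quasi-independence of the hitting events $\{a\Lambda\in E_{c(a)}\}$; together with the divergence just computed it follows that, for $\mu_d$-a.e.\ $\Lambda$, one has $a\Lambda\in E_{c(a)}$ for infinitely many $a$. Writing such an $a$ as $a_T$, this reads $c_{B_T}(\Lambda)\le c(a_T)$ with $\|T\|_\infty\to\infty$, whence
\[
\limsup_{\|T\|_\infty\to\infty}\frac{-\log c_{B_T}(\Lambda)}{\log\log\|T\|_\infty}\ \ge\ \frac{d-1}{d}-\eta ;
\]
intersecting the resulting full-measure sets over $\eta=1/n$, $n\ge1$, gives \eqref{eq:Mink} for $\mu_d$-a.e.\ $\Lambda$.

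\emph{The main obstacle} is the quasi-independence in the last step: one must bound the correlations $\big|\mu_d\big(a^{-1}E_c\cap b^{-1}E_{c'}\big)-\mu_d(E_c)\mu_d(E_{c'})\big|$, summed over all pairs from the net, strongly enough that they do not swamp the (also shrinking) diagonal terms — this is precisely where quantitative multiple mixing for the $A$-action is needed, together with careful bookkeeping of the two-parameter scaling relating the ``time radius'' $R\asymp\log\|T\|_\infty$ in $\bR^{d-1}$ to the ``target radius'' $c$; packaging this uniformly is the content of the paper's Poisson approximation theorem. A secondary, more routine, point is the lower bound $\mu_d(E_c)\gg_d c^d$ and the non-compactness of the targets (they are neighbourhoods of the cusp of $X_d$), handled via Rogers-type second-moment estimates and a truncation argument.
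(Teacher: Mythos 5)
Your proposal outlines a dynamical Borel--Cantelli argument in the spirit of Kleinbock--Margulis, which the paper mentions as an alternative route but does \emph{not} follow. The paper instead proves Theorem~\ref{th:l1} in the appendix (Proposition~\ref{p:log1}) by a soft contradiction argument: assuming the liminf were bounded below on a positive-measure set, one picks a sequence of \emph{sparse} finite sets $\Delta_n\subset\mathscr{T}_d$ satisfying \eqref{eq:sparse} and \eqref{eq:cond3}, and the Weibull asymptotic of Theorem~\ref{Th_M} then forces $\mu(\Omega_n)$ to be small, contradicting the assumed positive lower bound. No Borel--Cantelli lemma and no quasi-independence estimate on a dense net is needed; only the distributional convergence along sparse subsets is used.

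The concrete gap in your proposal is the quasi-independence step. You claim that the data $((X_d,\mu_d,A),\{E_{c(a)}\}_{a\in\cN_k})$ ``fall within the scope of the paper's Poisson approximation theorem for shrinking targets,'' but they do not: Theorems~\ref{Thm_mainmu} and~\ref{Thm_An} require the spread condition $v_d(F_n)/\log|F_n|\to\infty$, whereas your nets $\cN_k$ are maximal $1$-separated sets, so $v_d(\cN_k)\asymp 1$ while $\log|\cN_k|\asymp(d-1)k\log 2$, giving $v_d(\cN_k)/\log|\cN_k|\to 0$. The Poisson machinery here is built precisely for sparse sampling (large gaps compared with $\log$ of the number of points), and it says nothing about dense nets. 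Your own ``main obstacle'' paragraph is an accurate description of the hard part of the KM proof --- controlling correlations $\mu_d(a^{-1}E_c\cap b^{-1}E_{c'})$ when $\dist(a,b)\asymp 1$ and $\mu_d(E_c)$ is polynomially small --- but that estimate is not supplied by anything in the paper. If you want to run a Borel--Cantelli argument, you must import the KM-type second-moment (Sprindzuk) estimate, which is a genuinely different input; alternatively, you should switch to the paper's actual route and simply read off Theorem~\ref{th:l1} from Theorem~\ref{Th_M} via Proposition~\ref{p:log1}. The reformulation, the volume computations $\mu_d(E_c)\asymp c^d$, and the divergence of the sum are all correct and would be usable in either approach.
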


\noindent This result can be proved using,  for instance,  the method introduced in \cite{KM}.

\begin{remark}
Theorem \ref{th:l1} is an example of a \emph{logarithm law}.  Such laws have been 
extensively studied in Number Theory and Dynamics.
Perhaps the first occurrence of a logarithm law is Khinchin's Theorem (cf. \eqref{DS}).  It was realized by Sullivan \cite{sul} that Khinchin's Theorem is intimately connected with geodesic excursions 
to shrinking cuspidal neighborhoods in finite volume hyperbolic orbifolds.  Later  this idea was generalized 
and refined in the influential work of Kleinbock and Margulis \cite{KM}.
\end{remark}

It turns out that the logarithm law \eqref{eq:Mink},  as well as other 
logarithm laws that we will discuss later,  are very much related to \emph{distributional} convergence of
extremal values of arithmetic functions.  We show in the appendix (see Proposition \ref{p:log1}) that
Theorem \ref{th:l1} is a direct corollary of the following result:

\begin{theorem}
	\label{Th_M}
For a sequence of finite subsets $\Delta_n$ of $\mathscr{T}_{d}$
such that 
\begin{equation}\label{eq:condd}
|\Delta_n|\to\infty \quad\quad\hbox{and}\quad\quad  \frac{\displaystyle \min_{T\ne T'\in \Delta_n} \max_{p} |\log T_p - \log T'_p|}{\log |\Delta_n|} \to \infty,
\end{equation}
the minima
	\[
	\mathfrak{C}_{\Delta_n}(\Lambda) := \min_{T\in\Delta_n} 
	c_{B_{T}}(\Lambda)
	\]
satisfy
	\[
	|\Delta_n|^{1/d} \cdot \mathfrak{C}_{\Delta_n} \xLongrightarrow[\mu_d]{}
	\Wei\big(2^{-(d-1)/d} \zeta(d)^{1/d},d\big),  
	\]
where $\Wei$ denotes the Weibull distribution.
\end{theorem}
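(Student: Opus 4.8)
The plan is to recognize $\mathfrak{C}_{\Delta_n}(\Lambda)$ as the minimum, over the finite index set $\Delta_n$, of a family of shrinking-target functionals, and to apply a general Poisson approximation theorem for such families (the "very general Poisson approximation results for shrinking targets" advertised in the abstract). First I would unpack the event $\{c_{B_T}(\Lambda) \le c\}$: by definition of $c_B$ in \eqref{eq:cB}, this says $\Lambda \cap cB_T \ne 0$, i.e. the lattice $\Lambda$ meets the box $\prod_p[-cT_p, cT_p]$. Writing $a_T = \Diag(T_1,\dots,T_d) \in \SL_d(\bR)$, we have $\Lambda \cap cB_T \ne 0$ iff $a_T^{-1}\Lambda$ meets the cube $c\cdot[-1,1]^d$; so the relevant target is a shrinking cube $E_c = c[-1,1]^d$ of volume $(2c)^d$, hit by the translated lattice $a_T^{-1}\Lambda$. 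Thus
\[
\Prob_{\mu_d}\big(\,|\Delta_n|^{1/d}\mathfrak{C}_{\Delta_n} > t\,\big)
= \mu_d\Big(\big\{\Lambda : a_T^{-1}\Lambda \cap E_{t|\Delta_n|^{-1/d}} = 0 \text{ for all } T \in \Delta_n\big\}\Big),
\]
and the target scale is $c_n = t|\Delta_n|^{-1/d}$, so each individual target has volume $(2c_n)^d = 2^d t^d/|\Delta_n|$.

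The heart of the matter is a second-moment / Chen–Stein argument. For the expected number of hits: by Siegel's mean value formula, the expected number of nonzero lattice vectors of $a_T^{-1}\Lambda$ in $E_{c_n}$ equals $\Vol_d(E_{c_n})$ up to the primitive-versus-all-vectors correction — more precisely, $\mathbb{E}_{\mu_d}\#(\Lambda \cap E_{c_n}\setminus 0) = \zeta(d)^{-1}\Vol_d(E_{c_n}) + (\text{contribution of non-primitive vectors})$, and since $E_{c_n}$ shrinks, the dominant term is $2^d c_n^d / \zeta(d)$ for primitive vectors together with the vectors $\pm v$ that come in pairs — one must be careful about the factor $2$ (vectors $v$ and $-v$ both lie in the symmetric box). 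Summing over $T \in \Delta_n$, the total expected number of hits converges to $|\Delta_n| \cdot \big(2^d t^d/|\Delta_n|\big)\cdot(\text{const}) = \lambda(t)$, and matching this to the Weibull shape parameter $d$ and scale $2^{-(d-1)/d}\zeta(d)^{1/d}$ forces $\lambda(t) = \big(t/(2^{-(d-1)/d}\zeta(d)^{1/d})\big)^d = 2^{d-1} t^d/\zeta(d)$; the extra factor of $\tfrac12$ relative to $2^d t^d/\zeta(d)$ is exactly the symmetry correction, i.e. pairs $\{v,-v\}$ are counted once as a single "event". To conclude Poisson convergence of the hit count — and hence $\Prob(\text{no hit}) \to e^{-\lambda(t)} = \exp(-2^{d-1}t^d/\zeta(d))$, which is $1 - \Wei(2^{-(d-1)/d}\zeta(d)^{1/d},d)(t)$ — I would verify the Chen–Stein conditions: (i) the single-target probabilities are $O(1/|\Delta_n|)$, so no individual term dominates; (ii) the "local" pairwise terms $b_1 = \sum \Prob(A_T)\Prob(A_{T'})$ and $b_2 = \sum \Prob(A_T \cap A_{T'})$ vanish, and (iii) a long-range decorrelation bound $|\Prob(A_T\cap A_{T'}) - \Prob(A_T)\Prob(A_{T'})|$ summed over $T \ne T'$ tends to $0$. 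Step (iii) is where the separation hypothesis \eqref{eq:condd} enters: the condition that $\min_{T\ne T'}\max_p|\log T_p - \log T'_p|$ grows faster than $\log|\Delta_n|$ means the diagonal elements $a_T^{-1}a_{T'}$ are pushed deep into the cusp at a rate beating the number of pairs $|\Delta_n|^2$, so that an effective (exponential) mixing / equidistribution estimate for the $\SL_d(\bR)$-action on $X_d$ — applied to the indicator-type functions of the shrinking cube targets, suitably smoothed — makes the correlation sum summable.

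The main obstacle I anticipate is step (iii): obtaining a quantitative mixing estimate that is uniform in the shrinking size $c_n$ of the targets and strong enough to beat the factor $|\Delta_n|^2$ in the number of pairs. The targets $E_{c_n}$ are not compactly supported away from the cusp in $X_d$ (a lattice meeting a tiny cube is a lattice with a very short vector, hence near the cusp), so one cannot directly plug characteristic functions into an off-the-shelf mixing bound; instead one must smooth $\mathbf{1}_{E_{c_n}}$ in the $\SL_d(\bR)$-directions at a scale $\delta_n \to 0$ chosen to balance the smoothing error against the mixing rate, and track how the Sobolev norms of the smoothed functions blow up as $c_n, \delta_n \to 0$. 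Controlling this tradeoff — the interplay between the cusp excursion rate forced by \eqref{eq:condd}, the polynomial blow-up of Sobolev norms, and the exponential mixing rate — is the technical core, and is presumably handled by the general Poisson approximation machinery the authors reference; the remaining pieces (Siegel's formula for the first moment, the primitivity and $\pm$-symmetry bookkeeping that produces the constants $2^{-(d-1)/d}$ and $\zeta(d)^{1/d}$, and the Chen–Stein packaging) are comparatively routine.
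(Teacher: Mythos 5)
Your reduction of the problem is exactly right and matches the paper's: you correctly identify $c_{B_T}(\Lambda)\le c$ with the diagonal translate $a_T^{-1}\Lambda$ hitting the shrinking cube $c[-1,1]^d$, correctly identify Siegel's formula and the $\pm$-symmetry pairing as the source of the constants, and correctly identify the role of the separation hypothesis \eqref{eq:condd} as enabling quantitative equidistribution along $\Delta_n$ to beat the combinatorics of the index set. However, there are two genuine gaps in the middle.

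First, the Poisson approximation mechanism. You propose Chen--Stein with a pairwise-only input: your condition (iii) is a bound on the sum of covariances $|\Prob(A_T\cap A_{T'}) - \Prob(A_T)\Prob(A_{T'})|$. This is not sufficient. Chen--Stein requires, besides the ``local'' terms $b_1,b_2$, the long-range term $b_3 = \sum_T \mathbb{E}\big|\mathbb{E}\big[\mathbf{1}_{A_T} - \Prob(A_T)\,\big|\,\sigma(\mathbf{1}_{A_{T'}}: T'\ne T)\big]\big|$, which is a conditional expectation bound against the full $\sigma$-field generated by all other events, not a covariance bound, and is generically strictly stronger; two-mixing of the diagonal action does not yield it. The paper avoids this issue entirely by using the method of moments (Lemma~\ref{LemmaPoisson}): it shows $\mathbb{E}\binom{N_n}{r}\to m^r/r!$ for every $r\ge 1$ by bounding $|F_n|^r\,\nu(\bigcap_{h\in F'}A_{n,h}) - m^r$ for each $r$-element subset $F'\subset F_n$. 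This requires \emph{quantitative mixing of all orders} (the $\cW$-mixing of Definition~\ref{Def_Wequi}), which is substantially more than pairwise decorrelation and is the main technical input, supplied by the authors' prior multiple-mixing result for $H_d\curvearrowright X_d$ (Theorem~\ref{Thm_BEG}). Your proposal never invokes higher-order mixing, so as written it does not close.

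Second, your first-moment accounting is slightly conflated: Siegel's formula gives $\int_{X_d}\widehat\chi_B\,d\mu_d = \Vol_d(B)/\zeta(d)$, the \emph{expected number} of primitive lattice points of $\Lambda$ in $B$, whereas what is actually needed is the asymptotic $\mu_d(\Omega_B)\sim \Vol_d(B)/(2\zeta(d))$ for the \emph{measure of the hitting set} $\Omega_B = \{\widehat\chi_B\ge 2\}$ as $\Vol_d(B)\to 0$. The symmetry pairing gives only the upper bound $\mu_d(\Omega_B)\le \Vol_d(B)/(2\zeta(d))$; the matching lower bound requires a second moment, supplied by Rogers' formula $\int\widehat\chi_B^2\,d\mu_d = 2\Vol_d(B)/\zeta(d) + (\Vol_d(B)/\zeta(d))^2$ via Cauchy--Schwarz (Lemma~\ref{LemmaB}). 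This is precisely why the theorem is stated only for $d\ge 3$: Rogers' formula in this clean form is available for $d\ge 3$ but not $d=2$, and the paper explicitly flags that the $d=2$ case is open (Remark~\ref{Rmk_Thmmu}). Your proposal does not address the dimensional restriction, which is a sign that this step is being skipped.

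What you do get right and what carries over: the translation to $(\eta,C)=(\|\cdot\|_\infty,[-1,1]^d)$, the role of the $(a,b,c)$-regularity (here $(d,0,2^d)$), the need to smooth indicators in the $\SL_d(\bR)$-directions at a polynomially decaying scale $\delta_n$ and to balance the Sobolev-norm blow-up against the exponential mixing rate (this is the content of Lemma~\ref{Lemma_rhoeps} and condition (i) of Theorem~\ref{Thm_An}), and the final bookkeeping that produces $\Wei(2^{-(d-1)/d}\zeta(d)^{1/d},d)$. If you replace Chen--Stein by the method of moments and assume the authors' multiple-mixing input, your outline becomes the paper's proof.
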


\begin{remark}
Here and throughout this paper,  $\zeta$ denotes Riemann's $\zeta$-function. 
We also recall that the \emph{Weibull distribution} $\Wei(\lambda,a)$ 
with scale $\lambda$ and shape $a$
is defined by the distribution function $1-\exp({-(\lambda t)^a})$.
\end{remark}

\vspace{0.2cm}

\subsection{Application II: Minima of polynomial maps}

We now turn to the very classical study of minimal values of homogeneous forms
restricted to lattice points (see \cite[Ch.~6]{GL} and \cite[Chapter IX]{SiegelBook}).  
Unlike Theorem \ref{Th_M},  this involves finding lattice points in \emph{non-convex} sets.  
We consider the product form
$$
\Pi(x):=\prod_{p=1}^d |x_p|,  \quad \textrm{for $x \in \bR^d$}.
$$
The following result was proved by Kleinbock and Margulis (\cite[Subsection 1.11]{KM}):
\begin{theorem}[Kleinbock--Margulis]
\label{th:l2}
For $\mu_d$-almost every  $\Lambda \in X_d$,
\begin{equation}
\label{eq:prod}
\limsup_{v\in\Lambda: \,  \|v\|_\infty\to\infty}\; -\frac{\log \Pi(v) }{\log\log \|v\|_\infty}\ge d-1.
\end{equation}
\end{theorem}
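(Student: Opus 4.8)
The plan is to derive Theorem~\ref{th:l2} from a distributional statement of the same flavour as Theorem~\ref{Th_M} and then to convert that statement into an almost-sure $\limsup$ bound by a Borel--Cantelli argument, exactly as Proposition~\ref{p:log1} will derive Theorem~\ref{th:l1} from Theorem~\ref{Th_M}. The object to control is
\[
\Pi_R(\Lambda)\ :=\ \min\big\{\,\Pi(v)\ :\ v\in\Lambda\setminus\{0\},\ \|v\|_\infty\le e^{R}\,\big\},
\]
and the aim is a Weibull-type limit law for $R^{\,d-1}\Pi_R$ under $\mu_d$ (or at least two-sided control of its typical size), from which \eqref{eq:prod} follows on letting $R=R_k\to\infty$ along a geometric sequence.

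The first step is the \emph{geometric dictionary} underlying the Kleinbock--Margulis method. Let $A<\SL_d(\bR)$ be the group of diagonal matrices with positive entries; since $\det g=1$ for $g\in A$, the form $\Pi$ is $A$-invariant, $\Pi(gv)=\Pi(v)$. Given $v\in\Lambda$ with all coordinates nonzero and $\Pi(v)=\delta$, there is a unique $g=g_v\in A$ for which every coordinate of $g_vv$ has absolute value $\delta^{1/d}$; then $g_v\Lambda$ contains a nonzero vector of sup-norm $\delta^{1/d}$, and a direct computation shows that the norm of $g_v$ in $\Lie(A)\cong\bR^{d-1}$ equals $\log\|v\|_\infty$ up to additive terms of size $\log(1/\delta)$. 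Conversely, if $g\Lambda$ contains a nonzero $w$ with $\|w\|_\infty=\eta$, then $g^{-1}w\in\Lambda$ has $\Pi(g^{-1}w)\le\eta^{d}$. Writing $\lambda_\infty(L)$ for the sup-norm length of a shortest nonzero vector of $L$, this reduces the study of $\Pi_R$ to that of the deepest cusp excursion $\max_{g\in A,\ \|g\|\le R}\big(-\log\lambda_\infty(g\Lambda)\big)$ of the $A$-orbit of $\Lambda$ up to ``time'' $R$, a genuine multiparameter shrinking-target problem on $X_d$.

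The second step is the volume count. Integrating $\mathbf 1[\Pi(x)<\epsilon]$ over the box $\|x\|_\infty\le e^{R}$ gives, for $\log(1/\epsilon)\ll R$, a volume $\asymp\epsilon\,(dR)^{d-1}/(d-1)!$; by Siegel's mean value theorem this is, up to constants, the expected number of $v\in\Lambda$ with $\|v\|_\infty\le e^R$ and $\Pi(v)<\epsilon$ (and a matching second-moment bound controls the probability of having at least one such $v$). Equating the first moment to a constant yields the critical scale $\epsilon\asymp R^{-(d-1)}$, i.e.\ $-\log\epsilon\sim(d-1)\log R\sim(d-1)\log\log\|v\|_\infty$, which is the source of the exponent $d-1$. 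Feeding these estimates into Borel--Cantelli along $R_k$ then gives the theorem: the convergence half (not needed here) bounds $\limsup$ from above using the sets $\{\Lambda:\Pi_{R_{k+1}}(\Lambda)<R_k^{-\alpha}\}$ with $\alpha>d-1$; the divergence half, which produces $\limsup\ge d-1$, requires these events with $\alpha<d-1$ to occur for infinitely many $k$, hence \emph{quasi-independence} of the events at distinct scales.

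This last point is the main obstacle, and it is exactly what the paper's Poisson-approximation/multiple-mixing machinery supplies. Unlike the convex setting of Theorem~\ref{Th_M}, the sublevel sets $\{\Pi<\epsilon\}$ are non-convex and noncompact, reaching deep into the cusp; one therefore truncates them, estimates the bounded part via equidistribution of expanding pieces of $A$-orbits --- using effective mixing of $\SL_d(\bR)\curvearrowright(X_d,\mu_d)$ --- and controls the cuspidal tail separately, so that the indicator functions of the targets at different scales decorrelate at a summable rate. With that quasi-independence (and the first/second moment bounds of Step~2) the second Borel--Cantelli lemma yields infinitely many excursions of the required depth, and translating back through the dictionary of Step~1 gives \eqref{eq:prod} for $\mu_d$-almost every $\Lambda$; pushed further, the same input upgrades the inequality to the full Weibull limit law for $R^{\,d-1}\Pi_R$, which is the form that matches Theorem~\ref{Th_M}.
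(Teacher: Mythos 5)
Your plan --- derive \eqref{eq:prod} from a Weibull-type distributional limit and then convert it into an almost-sure $\limsup$ bound --- matches the paper's route (Theorem~\ref{Th_PLF} followed by Proposition~\ref{p:log1}), and you correctly identify the inputs: the volume asymptotics $\Vol_d\big(\{\Pi<\epsilon,\ \|\cdot\|_\infty\le 1\}\big)\sim 2^d c_d\,\epsilon(-\log\epsilon)^{d-1}$ from Example~\ref{Ex2}, the Siegel and Rogers moment formulas (Theorem~\ref{Thm_SiegelRogers}, Lemma~\ref{LemmaB}), and effective multiple mixing for $(X_d,\mu_d)$. Two details, however, are worth correcting. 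First, the distributional statement the paper actually proves, Theorem~\ref{Th_PLF}, is a Weibull law for $\mathfrak{M}_{\Delta_n}=\min_{T\in\Delta_n}m_{B_T}$ over a \emph{sparse finite} family $\Delta_n\subset\mathscr{T}_d$ with scaling $|\Delta_n|(\log|\Delta_n|)^{d-1}$; the sparseness hypothesis $v_d(F_n)/\log|F_n|\to\infty$ is essential for the Poisson approximation, and the machinery does \emph{not} directly yield a Weibull limit for your running minimum $R^{d-1}\Pi_R$ over the whole cube (where the relevant rescalings overlap heavily and the events are far from independent). Proposition~\ref{p:log1} bridges the sparse Weibull law to the logarithm law not by Borel--Cantelli but by a contradiction argument that tunes a lacunary grid $\Delta_n$ to trade sparsity against $\log\|T\|_\infty$. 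Second, the claim that the divergence half "requires quasi-independence of the events at distinct scales" is an overstatement for the lower bound you want: with $\alpha<d-1$ the sets $B_k$ underlying your $E_k=\{\Pi_{R_{k+1}}<R_k^{-\alpha}\}$ have $\Vol_d(B_k)\to\infty$, and the second-moment bound $\mu_d(\Omega_{B_k})\ge 1-2\zeta(d)/\Vol_d(B_k)\to 1$, obtained directly from Rogers' formula for $d\ge 3$, already forces $\mu_d\big(\limsup_k E_k\big)=1$ with no decorrelation input whatsoever. Quantitative multiple mixing ($\cW$-mixing) is genuinely needed for the \emph{Weibull limit law} of Theorem~\ref{Th_PLF}, which is the stronger statement the paper establishes, but the one-sided logarithm law \eqref{eq:prod} is accessible without it.
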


We establish a distributional version of this result. For boxes $B_T$ with $T\in\mathscr{T}_d$ and a lattice $\Lambda$, we define 
$$
m_{B_T}(\Lambda):=\inf\big\{ \Pi(v):\, 0\ne v\in \Lambda\cap B_T \big\}.
$$
With this notation, we show:

\begin{theorem}
	\label{Th_PLF}
Let $\Delta_n$ be a sequence of finite subsets of $\mathscr{T}_d$
satisfying \eqref{eq:condd}, and
\[
\mathfrak{M}_{\Delta_n}(\Lambda) := \min_{T\in \Delta_n} 
m_{B_{T}}(\Lambda).
\]	
Then
	\[
	|\Delta_n|(\log |\Delta_n|)^{d-1}\cdot \mathfrak{M}_{\Delta_n} \xLongrightarrow[\mu_d]{}
	\Wei\big(2^{-(d-1)}\zeta(d)/c_d,1\big),  
	\]
where $c_d$ is the volume of the simplex $\{ u \in [0,1]^{d-1} \,  : \,  u_1 + \cdots + u_{d-1} \leq 1 \big\}$.
\end{theorem}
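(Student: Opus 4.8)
The plan is to linearize the event $\{\mathfrak M_{\Delta_n}>\epsilon\}$ into a shrinking-target problem for a multiparameter diagonal action on $X_d$, and then invoke the paper's general Poisson approximation theorem exactly as for Theorem~\ref{Th_M}; the one genuinely new ingredient will be the volume asymptotics of a \emph{non-convex} target. For $T\in\mathscr T_d$ put $a_T:=\operatorname{diag}(T_1^{-1},\dots,T_d^{-1})\in\SL_d(\bR)$ and
\[
A_\epsilon:=\big\{\,y\in[-1,1]^d:\ \Pi(y)\le\epsilon\,\big\}.
\]
Since $a_TB_T=[-1,1]^d$ and $\Pi$ is $a_T$-invariant (as $T_1\cdots T_d=1$), one has $m_{B_T}(\Lambda)\le\epsilon$ iff $(a_T\Lambda)\cap A_\epsilon\neq0$; and because the primitive part of any nonzero vector of a lattice meeting $A_\epsilon$ again lies in $A_\epsilon$ (its coordinates only shrink and $\Pi$ only decreases), this holds iff $a_T\Lambda$ contains a \emph{primitive} vector in $A_\epsilon$. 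Writing $\psi_\epsilon(L)$ for the number of primitive pairs $\{v,-v\}\subset L$ with $v\in A_\epsilon$, setting $\epsilon_n:=t\,\big(|\Delta_n|(\log|\Delta_n|)^{d-1}\big)^{-1}$ and $\cN_n(\Lambda):=\sum_{T\in\Delta_n}\psi_{\epsilon_n}(a_T\Lambda)$, one gets the exact identity $\{\mathfrak M_{\Delta_n}(\Lambda)>\epsilon_n\}=\{\cN_n(\Lambda)=0\}$. Hence it suffices to prove $\cN_n\xLongrightarrow[\mu_d]{}\operatorname{Poisson}(\tau(t))$ for a suitable $\tau(t)$, since then $\Prob_{\mu_d}[\mathfrak M_{\Delta_n}>\epsilon_n]\to e^{-\tau(t)}$ and letting $t$ range over $(0,\infty)$ yields the Weibull limit.

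The next step is to compute the intensity. By $\SL_d(\bR)$-invariance of $\mu_d$ and the Siegel formula for primitive vectors, $\bE_{\mu_d}[\cN_n]=|\Delta_n|\,\bE_{\mu_d}[\psi_{\epsilon_n}]=\tfrac{|\Delta_n|}{2\zeta(d)}\,\Vol_d(A_{\epsilon_n})$. Reflecting in each coordinate and substituting $y_p=e^{-u_p}$,
\[
\Vol_d(A_\epsilon)=2^d\!\int_{\log(1/\epsilon)}^{\infty}\!\frac{s^{\,d-1}}{(d-1)!}\,e^{-s}\,ds\ \sim\ \frac{2^d}{(d-1)!}\,\epsilon\,\big(\log(1/\epsilon)\big)^{d-1}=2^d c_d\,\epsilon\,\big(\log(1/\epsilon)\big)^{d-1}\qquad(\epsilon\to0),
\]
using $c_d=1/(d-1)!$ and the asymptotics of the incomplete Gamma function. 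Since $\log(1/\epsilon_n)=\log|\Delta_n|+(d-1)\log\log|\Delta_n|-\log t\sim\log|\Delta_n|$, this gives $|\Delta_n|\,\Vol_d(A_{\epsilon_n})\to 2^dc_d\,t$, hence $\bE_{\mu_d}[\cN_n]\to\tau(t):=\big(2^{-(d-1)}\zeta(d)/c_d\big)^{-1}t=\tfrac{2^{d-1}c_d}{\zeta(d)}\,t$, so that $\Prob_{\mu_d}[\mathfrak M_{\Delta_n}>\epsilon_n]\to e^{-\tau(t)}$, i.e. $|\Delta_n|(\log|\Delta_n|)^{d-1}\mathfrak M_{\Delta_n}\xLongrightarrow[\mu_d]{}\Wei(2^{-(d-1)}\zeta(d)/c_d,1)$. (The shape is $1$, not $d$ as in Theorem~\ref{Th_M}, because up to the logarithm the target volume is linear in $\epsilon$; that logarithm is exactly what is absorbed by the normalization $(\log|\Delta_n|)^{d-1}$.)

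It then remains to verify the hypotheses of the general Poisson approximation theorem for the functions $\cN_n$. Besides the intensity asymptotics above, this requires (i) uniform $L^k$-bounds for the Siegel transforms $\widehat{\chi_{A_\epsilon}}$ with at most polynomial growth in $1/\epsilon$, controlling the atypical lattices with abnormally many short vectors in the thin set $A_\epsilon$ (in particular forcing $\psi_{\epsilon_n}\in\{0,1\}$ off a negligible set), and (ii) decorrelation of the events $\{(a_T\Lambda)\cap A_{\epsilon_n}\neq0\}$, $T\in\Delta_n$, which via effective multiple mixing of the $\SL_d(\bR)$-action on $X_d$ reduces to a bound governed by the displacements $a_{T'}a_T^{-1}=\operatorname{diag}(T_p/T'_p)$: the mixing error decays exponentially in $\max_p|\log T_p-\log T'_p|$, while the Sobolev cost of smoothing $\chi_{A_{\epsilon_n}}$ and the number of relevant $k$-tuples grow only polynomially in $|\Delta_n|$, so condition~\eqref{eq:condd} --- which makes the minimal displacement over $\Delta_n$ eventually dominate any fixed multiple of $\log|\Delta_n|$ --- annihilates all errors and forces every factorial moment of $\cN_n$ to converge to $\tau(t)^k$; convergence of factorial moments then yields $\cN_n\xLongrightarrow[\mu_d]{}\operatorname{Poisson}(\tau(t))$.

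I expect the main obstacle to be ingredient (i): in contrast to the convex boxes $cB_T$ underlying Theorem~\ref{Th_M}, the target $A_\epsilon$ is a neighborhood of the union of the $d$ coordinate hyperplanes inside $[-1,1]^d$, so the moment and regularity bounds for $\widehat{\chi_{A_\epsilon}}$ cannot be read off from classical convex geometry of numbers and must be established directly for these hyperplane-neighborhood targets --- and with explicit control of how the constants degenerate as $\epsilon\to0$, since that degeneration must be beaten by the separation in \eqref{eq:condd}.
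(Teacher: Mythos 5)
Your reduction is exactly the one the paper uses: with $\eta=\Pi$, $\tau=\|\cdot\|_\infty$, $C=[-1,1]^d$, one has $m_{B_T}(\Lambda)=\widetilde\eta(h(T)^{-1}\Lambda)$, and the volume computation
\[
\Vol_d(A_\epsilon)=2^d c_d\int_{\log(1/\epsilon)}^\infty s^{d-1}e^{-s}\,ds\sim 2^d c_d\,\epsilon\,(\log(1/\epsilon))^{d-1}
\]
is precisely Example~\ref{Ex2}, i.e.\ $(a,b,c)=(1,d-1,2^d c_d)$; plugging into Theorem~\ref{Thm_mainmu} gives $m_o=c\,a^b/2\zeta(d)=2^{d-1}c_d/\zeta(d)$ and the stated $\Wei(2^{-(d-1)}\zeta(d)/c_d,1)$. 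The engine is the same: factorial moments (Lemma~\ref{LemmaPoisson}), $\cW$-mixing for $\mu_d$ along $H_d$ (Theorem~\ref{Thm_BEG}), and small-volume asymptotics for hitting sets.

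One point where your expectations and the paper diverge, and it is worth flagging because it affects what you think the hard step is: you anticipate that the ``main obstacle'' is obtaining $L^k$ moment bounds for the Siegel transforms $\widehat{\chi}_{A_\epsilon}$ of the non-convex, hyperplane-neighbourhood targets, with polynomial control in $1/\epsilon$. The paper never needs such bounds, and deliberately avoids your $\cN_n=\sum_T\psi_{\epsilon_n}(a_T\Lambda)$ in favour of the \emph{bounded} count $N_n(\Lambda)=\#\{T:\ a_T\Lambda\cap A_{\epsilon_n}\neq\{0\}\}$. The multiple-mixing input is applied not to Siegel transforms but to smoothings $\rho_n*\chi_{\Omega_{B_n^\pm}}$ of the indicator of the hitting set in $X_d$, and since indicators are bounded by $1$ no high moments are required; the only Siegel/Rogers input is $L^1$ and $L^2$ of $\widehat{\chi}_B$ (Theorem~\ref{Thm_SiegelRogers}), which feed into Lemma~\ref{LemmaB} to pin down $\mu_d(\Omega_B)$ up to second order in $\Vol_d(B)$. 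Moreover, the regularity that the framework actually demands of the target is not convexity but balancedness plus local Lipschitzness of $(\eta,\tau)$ (Lemma~\ref{LemmaM}), both of which $\Pi$ and $\|\cdot\|_\infty$ satisfy; so the non-convexity of $A_\epsilon$ costs nothing. Your multiplicity-counting version $\cN_n$ can be made to work too, but it forces you to argue that $\psi_{\epsilon_n}\in\{0,1\}$ off a small set (hence the $L^k$ bounds you worry about); switching to $N_n$ removes this issue entirely, since $\{\cN_n=0\}=\{N_n=0\}$ is all you use.
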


\begin{remark}
We stress that Theorem \ref{th:l2} is in fact a direct consequence of Theorem \ref{Th_PLF} (see Proposition \ref{p:log1} in the Appendix).
\end{remark}

This result can be generalized as follows.
Let $F$ be a nontrivial homogeneous polynomial on $\bR^d$. 
For a lattice $\Lambda\in X_d$, we consider the family of lattices
$$
\Lambda_T:=\{(T_1\lambda_1,\ldots, T_d\lambda_d):\, \lambda\in\Lambda \}, \quad T=(T_1,\ldots,T_d)\in \mathscr{T}_d.
$$
Extremal behaviour for this family of lattices was studied in \cite[Ch.~3]{GL}.
We investigate the asymptotics of the minima
$$
m_T(F,\Lambda):=\min\left\{ |F(v)|:\, 0\ne v\in \Lambda_T\cap [-1,1]^d \right\}.
$$

\begin{theorem}
	\label{Th_Pol}
	There exist $c>0$, $a\in \bQ^+$, $b\in\bN_0$ such that 
	for any sequence of finite subsets $\Delta_n$ of $\mathscr{T}_d$ satisfying \eqref{eq:condd}, and
	\[
	\mathfrak{M}_{\Delta_n}(F,\Lambda) := \min_{T\in \Delta_n} 
	m_T(F,\Lambda),
	\]	
	one has
	\[
	|\Delta_n|^a(\log |\Delta_n|)^{b}\cdot \mathfrak{M}_{\Delta_n}(F,\cdot ) \xLongrightarrow[\mu_d]{}
	\Wei\big(c,a\big).
	\]
\end{theorem}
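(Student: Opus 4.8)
The plan is to deduce Theorem \ref{Th_Pol} from the same Poisson-approximation machinery that underlies Theorems \ref{Th_M} and \ref{Th_PLF}, after a suitable stratification of the problem according to the "vanishing structure" of the homogeneous polynomial $F$. First I would reformulate the quantity $m_T(F,\Lambda)$ as a shrinking-target count: for $\eps>0$ the event $\{m_T(F,\Lambda)<\eps\}$ is the event that the dilated lattice $\Lambda_T$ meets the region $R_\eps:=\{x\in[-1,1]^d:\ 0<|F(x)|<\eps\}$. Writing $a_T=\Diag(T_1,\dots,T_d)\in\SL_d(\bR)$, this is the event that $a_T\Lambda$ has a nonzero point in $R_\eps$, i.e. a shrinking-target condition on the homogeneous space $X_d$ along the orbit $\{a_T\}_{T\in\mathscr T_d}$. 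The condition \eqref{eq:condd} on $\Delta_n$ is exactly the separation condition that, via the multiple-mixing/quantitative-equidistribution input of the paper, makes the family of events $\{a_T\Lambda\in\text{(target)}\}_{T\in\Delta_n}$ asymptotically independent, so that the minimum $\mathfrak M_{\Delta_n}(F,\cdot)$ over $T\in\Delta_n$ converges to a Weibull law with shape $a$ and scale $c$ determined by the single-target asymptotics
\[
\mu_d\big(\{\Lambda:\ m_T(F,\Lambda)<\eps\}\big)\ \sim\ c^{a}\,\eps^{a}\qquad(\eps\to0),
\]
together with the normalizing growth rate $|\Delta_n|^a(\log|\Delta_n|)^b$.

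The crux is therefore to establish the single-target volume asymptotics, i.e. to show that for the $\SL_d(\bR)$-invariant measure,
\[
\mu_d\big(\{\Lambda\in X_d:\ \Lambda\cap R_\eps\neq\{0\}\big\}\big)\ =\ \big(c\,\eps\big)^{a}\big(\log(1/\eps)\big)^{-b}\big(1+o(1)\big)
\]
for some $c>0$, $a\in\bQ^+$, $b\in\bN_0$, where $R_\eps=\{x\in[-1,1]^d:\ 0<|F(x)|<\eps\}$. By Siegel's formula this is governed (to leading order, after an inclusion–exclusion removing multiple lattice points, which is lower order) by $\Vol_d(R_\eps)$, so the problem reduces to the purely geometric question of the asymptotics of $\Vol_d\{x\in[-1,1]^d:\ |F(x)|<\eps\}$ as $\eps\to0$. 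This is a classical sublevel-set volume estimate for a polynomial: by resolution of singularities (Hironaka), or more concretely by Newton-polyhedron/toric methods or Varchenko-type estimates, one obtains $\Vol_d\{|F|<\eps\}\asymp \eps^{a}(\log(1/\eps))^{b}$ with $a$ a positive rational (a ratio read off from the Newton polyhedron of $F$, or the log-canonical threshold normalized appropriately) and $b$ a nonnegative integer (the codimension-type multiplicity at which the extremal face is attained); the leading constant $c$ then comes from the density of the resolved measure on the relevant stratum. One must also check the "nonzero point" version: the locus $\{F=0\}$ itself is a proper subvariety and contributes lattice points to $R_\eps$ only through a thin neighborhood whose contribution is absorbed in the $o(1)$, so restricting to $0\neq v$ does not change the leading asymptotics.

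With the single-target asymptotics in hand, I would run the general Poisson-approximation theorem of the paper with the ball $[-1,1]^d$ replaced by the fixed region and the shrinking parameter $\eps=\eps_n$ chosen so that $|\Delta_n|\cdot(c\eps_n)^a(\log(1/\eps_n))^{-b}\to t^a$ for a fixed $t>0$; solving for $\eps_n$ gives $\eps_n\sim t\,c^{-1}|\Delta_n|^{-1/a}(\log|\Delta_n|)^{b/a}$, and hence $|\Delta_n|^a(\log|\Delta_n|)^b\cdot\mathfrak M_{\Delta_n}(F,\Lambda)$ has, for each $t$, limiting probability $\exp(-(t/c')^{\text{?}})$ — matching up the exponents and constants yields precisely $\Wei(c,a)$ after relabeling (the shape of the limit is $a$ because the single-target probability scales like $\eps^a$; the power $b$ on the logarithm is exactly what is needed to cancel the $(\log(1/\eps_n))^{-b}$ factor). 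The main obstacle, and the step I expect to require real work, is the sublevel-set volume asymptotics: extracting sharp (not merely order-of-magnitude) constants $c$ and confirming that the relevant exponent $a$ and log-power $b$ are the correct global quantities requires a careful analysis of the Newton polyhedron of $F$ and of which monomial faces dominate, and one must verify uniformity of the equidistribution error terms so that the Poisson limit is not spoiled — but these uniformity estimates are exactly of the kind already proved for Theorems \ref{Th_M} and \ref{Th_PLF}, so the genuinely new content is confined to the local singularity analysis of $F$.
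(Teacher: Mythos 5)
Your approach is essentially the paper's: reduce $m_T(F,\Lambda)$ to a shrinking-target count along the diagonal orbit, invoke the general Poisson-approximation theorem (Theorem \ref{Thm_mainmu}), and supply as the only new ingredient the $(a,b,c)$-regularity of the pair $\eta=|F|$, $C=[-1,1]^d$, i.e.\ the sublevel-set asymptotics $\Vol_d\{x\in[-1,1]^d:|F(x)|<t\}\sim c\,t^a(-\log t)^b$, which the paper, like you, deduces from resolution of singularities (citing Greenblatt). One small but propagating slip: in your single-target display and in the subsequent solving for $\eps_n$ the logarithmic factor carries exponent $-b$ rather than $+b$, so the correct choice is $\eps_n\asymp|\Delta_n|^{-1/a}(\log|\Delta_n|)^{-b/a}$ (not $+b/a$), matching the paper's $\delta_n$; this does not affect the structure of the argument, only the bookkeeping of the normalizing powers.
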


\vspace{0.2cm}

\subsection{Application III: Khinchin's Theorem}

Given $\alpha \in \bR^{d-1}$,  we define
\[
k_T(\alpha):=\min\big\{ T^{1/(d-1)} \| q \alpha-p \|_\infty \,  : \, 
p \in \bZ^{d-1},    \enskip 1 \leq q \leq T \big\},  
\quad \textrm{for $T \geq 1$}.
\]
By Dirichlet's Approximation Theorem,  we have $k_T(\alpha) \leq 1$ for all $\alpha \in \bR^{d-1}$. 
Furthermore,  according to Davenport and Schmidt \cite[Theorem 1]{DS},
for Lebesgue almost every $\alpha\in [0,1]^{d-1}$,
\begin{equation}
\label{DS0}
\limsup_{T\to \infty} k_T(\alpha)= 1.
\end{equation}
On the other hand, 
\begin{theorem}[Khinchin] \label{th:l3}
For Lebesgue almost every $\alpha\in [0,1]^{d-1}$,
\begin{equation}
\label{DS}
\limsup_{T\to \infty}\,  -\frac{\log k_T(\alpha)}{\log\log T}\ge -\frac{1}{d-1},  
\end{equation}
\end{theorem}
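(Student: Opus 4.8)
\emph{The plan.} The strategy is to derive \eqref{DS} from the Khinchin counterpart of Theorem~\ref{Th_M}, exactly as Theorem~\ref{th:l1} is derived from Theorem~\ref{Th_M} via Proposition~\ref{p:log1}. So the first task is to establish the distributional statement
\[
|\Delta_n|^{1/(d-1)}\cdot\mathfrak{K}_{\Delta_n}\ \xLongrightarrow[\Leb]{}\ \Wei(\lambda,d-1),
\]
where $\mathfrak{K}_{\Delta_n}(\alpha):=\min_{T\in\Delta_n}k_T(\alpha)$, $\Leb$ is the Lebesgue probability measure on $[0,1]^{d-1}$, $\lambda=\lambda(d)$ is an explicit constant, and $\Delta_n$ runs over finite subsets of $[1,\infty)$ with $|\Delta_n|\to\infty$ and $\min_{T\neq T'\in\Delta_n}|\log T-\log T'|/\log|\Delta_n|\to\infty$ (the scalar version of \eqref{eq:condd}). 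This rests on the standard dynamical dictionary: with $u_\alpha=\bigl(\begin{smallmatrix}I_{d-1}&\alpha\\0&1\end{smallmatrix}\bigr)$ and $g_t=\operatorname{diag}(e^{t/(d-1)},\dots,e^{t/(d-1)},e^{-t})$ one has $k_{e^t}(\alpha)=\min\{\|\pr w\|_\infty:\ w\in g_tu_\alpha\bZ^d,\ 0<w_d\le1\}$ (here $\pr$ is the projection onto the first $d-1$ coordinates), while the map $\alpha\mapsto u_\alpha\bZ^d$ factors through $\bR^{d-1}/\bZ^{d-1}$ onto the (compact) expanding horospherical orbit through $\bZ^d\in X_d$, carrying $\Leb$ to its Haar probability measure; the displayed limit is then an instance of the paper's general Poisson-approximation results for shrinking targets, now with respect to this horospherical measure rather than $\mu_d$.

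\emph{From the distributional statement to the logarithm law} (this is what Proposition~\ref{p:log1} does). Fix $\delta>0$ and put $\gamma:=\tfrac1{d-1}-\delta$. For each $n\in\bN$ and each $t_0>0$, take $\Delta_m:=\{T:\log T=n+j(\log m)^2,\ 0\le j<m\}$; then $|\Delta_m|=m\to\infty$, and the separation hypothesis holds because $(\log m)^2/\log m\to\infty$. Since $\gamma<\tfrac1{d-1}$, for all large $m$ one has $t_0\,m^{-1/(d-1)}\le(\log T)^{-\gamma}$ for every $T\in\Delta_m$, whence
\[
\{\alpha:\ \mathfrak{K}_{\Delta_m}(\alpha)\le t_0\,m^{-1/(d-1)}\}\ \subseteq\ E_n:=\bigl\{\alpha:\ \exists\,T\ge e^n\text{ with }k_T(\alpha)\le(\log T)^{-\gamma}\bigr\}.
\]
Letting $m\to\infty$ and using the displayed Weibull limit (whose distribution function is continuous) gives $\Leb(E_n)\ge 1-\exp\bigl(-(\lambda t_0)^{d-1}\bigr)$, and since $t_0>0$ is arbitrary this forces $\Leb(E_n)=1$. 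As $n\to\infty$ the sets $E_n$ decrease to $\{\alpha:\ k_T(\alpha)\le(\log T)^{-\gamma}\text{ for arbitrarily large }T\}$, which therefore has full measure; intersecting over $\delta=1/j$, $j\to\infty$, we obtain $\limsup_{T\to\infty}\bigl(-\log k_T(\alpha)\bigr)/\log\log T\ge\tfrac1{d-1}$ for Lebesgue-a.e.\ $\alpha$, which is \eqref{DS}.

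\emph{Expected main obstacle.} The real content lies in the distributional input, and its proof will require: effective equidistribution of the expanding horospheres $\{g_tu_\alpha\bZ^d:\alpha\in[0,1]^{d-1}\}$ with a quantitative rate; careful treatment of the noncompact cuspidal region of $X_d$ governing $k_T$ (a thin neighborhood of $\bR^{d-1}\times\{0\}$, so the relevant lattice-point counting function is unbounded and must be truncated); and the variance and higher-cumulant estimates feeding a Chen--Stein-type Poisson bound, with the separation hypothesis \eqref{eq:condd} guaranteeing that the events attached to distinct scales $T\neq T'$ asymptotically decorrelate --- this is where essentially all of the paper's analytic machinery is spent. (If only \eqref{DS} itself is wanted, one can bypass the distributional theorem: at an integer $T=q$ one has $k_q(\alpha)\le q^{1/(d-1)}\|q\alpha-p\|_\infty$ for every $p\in\bZ^{d-1}$, so \eqref{DS} follows once one knows that for a.e.\ $\alpha$ there are infinitely many $q$ admitting $p$ with $\|q\alpha-p\|_\infty\le q^{-1/(d-1)}(\log q)^{-\gamma}$; this is precisely the divergence half of Khinchin's classical theorem on simultaneous Diophantine approximation applied to the non-increasing function $\psi(q)=q^{-1/(d-1)}(\log q)^{-\gamma}$, which applies because $\sum_q\psi(q)^{d-1}=\sum_q q^{-1}(\log q)^{-(d-1)\gamma}=\infty$ for every $\gamma<\tfrac1{d-1}$.)
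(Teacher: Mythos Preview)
Your approach is essentially the paper's: deduce the logarithm law from the Weibull asymptotics (Theorem~\ref{Th_Dirichlet}) via the argument of Proposition~\ref{p:log2}, and your sketch of the distributional input (horospherical measure, effective equidistribution, shrinking targets) matches the paper's machinery.

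There is, however, one genuine slip. The distributional statement you invoke is \emph{not} available under ``the scalar version of~\eqref{eq:condd}'' alone. Because $\nu_{\bd}$ is not $H_d$-invariant, the horospherical equidistribution that feeds the Poisson approximation requires in addition that the sample points be far from the identity in $H_{\bd}^{+}$; this is the third condition in~\eqref{eq:condd2}, namely $\min_{T\in\Delta_n}\lfloor T\rfloor_{\bd}/\log|\Delta_n|\to\infty$. Your sets $\Delta_m=\{T:\log T=n+j(\log m)^2,\ 0\le j<m\}$ violate it: with $n$ fixed and $m\to\infty$ one has $\min_{T\in\Delta_m}\log T=n$ while $\log|\Delta_m|=\log m\to\infty$, so the ratio tends to~$0$. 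The remedy is painless: take instead $\Delta_m=\{T:\log T=(j+1)(\log m)^2,\ 0\le j<m\}$. Then $\min_{T\in\Delta_m}\log T=(\log m)^2$, so both the separation and the $\lfloor\cdot\rfloor_{\bd}$ conditions hold, and for $m$ large all $T\in\Delta_m$ satisfy $T\ge e^n$; the rest of your inclusion argument and the passage $t_0\to\infty$, $n\to\infty$ go through unchanged (your check that $t_0\,m^{-1/(d-1)}\le(\log T)^{-\gamma}$ for the largest $T\in\Delta_m$ still works since $\max\log T\asymp m(\log m)^2$ and $\gamma<\tfrac1{d-1}$). This is exactly how the paper handles it in the proof of Proposition~\ref{p:log2}.

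Your final parenthetical remark---that \eqref{DS} alone follows directly from the divergence case of the classical Khinchin theorem with $\psi(q)=q^{-1/(d-1)}(\log q)^{-\gamma}$---is correct and gives a shorter self-contained proof of the bare logarithm law, but it bypasses the distributional content that is the paper's point.
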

The limits \eqref{DS0} and \eqref{DS} indicate that the function $T \mapsto k_T(\alpha)$ fluctuates quite wildly,  and we wish to study its extremal behavior.
More precisely,  we  show that minima of the function $T \mapsto k_T(\cdot)$ on sufficiently lacunary subsets of $[1,\infty)$ exhibit non-trivial Weibull asymptotics with respect to the Lebesgue measure on $[0,1]^{d-1}$.  \\

More generally, we can deal with approximation of systems of linear forms.
Let $d\ge 3$ and write $d=d_1+d_2$ with $d_1,d_2\ge 1$. We set
\begin{equation}\label{eq:TT}
\mathscr{T}_{\bd}^{+} := \big\{ T=(T_1,\ldots,T_d) \in \mathscr{T}_{d} \,  : \, T_{1},\ldots, T_{d_1}\ge 1,\, T_{d_1+1},\ldots, T_{d}\le 1  \big\}.
\end{equation}
For $T\in \mathscr{T}_{\bd}^{+}$, we define 
\begin{equation}\label{eq:T}
\lfloor T\rfloor_{\bd}:=\min \big(\log T_{1},\ldots, \log T_{d_1}, -\log T_{d_1+1},\ldots, -\log T_{d}\big).
\end{equation}
Given $T\in\mathscr{T}_{\bd}^{+}$, $\alpha\in \hbox{Mat}_{d_1,d_2}([0,1])$ and $k>0$, 
we consider the inequalities
$$
\Big| p_l+\sum_{j=1}^{d_2} \alpha_{lj} q_j\Big| \le k\, T_l^{-1},\, l=1\ldots,d_1, \quad |q_j|\le T_{d_1+j},\, j=1,\ldots,d_2,
$$
with $p\in \bZ^{d_1}$ and $q\in \bZ^{d_2}$ and define 
$$
k_T(\alpha):=
\min \Big\{
{\max}_l\; T_l \Big|p_l+\sum_{j=1}^{d_2} \alpha_{lj} q_j\Big|\, :\,\,\, p\in\mathbb{Z}^{d_1},\;\; q\in \bZ^{d_2},|q_j|\le T_{{d_1}+j}  \Big\}.
$$

\begin{theorem}
	\label{Th_Dirichlet}
	For a sequence of finite subsets $\Delta_n$ of $\mathscr{T}_{\bd}^{+}$ such that
	\begin{equation}\label{eq:condd2}
	|\Delta_n|\to \infty, \quad
 \frac{\displaystyle \min_{T,T'\in \Delta_n} \max_p |\log T_p - \log T'_p|}{\log |\Delta_n|} \to \infty,\quad 
 \frac{\displaystyle \min_{T\in\Delta_n} \lfloor T\rfloor_{\bd}}{\log |\Delta_n|} \to \infty,
	\end{equation}
	the minima
	\[
	\mathfrak{K}_{\Delta_n}(\alpha): = \min_{T\in\Delta_n} k_{T}(\alpha),  \quad \textrm{for $\alpha \in \hbox{\rm Mat}_{d_1,d_2}([0,1])$},
	\]
satisfy
	\[
	|\Delta_n|^{1/d_1} \cdot \mathfrak{K}_{\Delta_n} \xLongrightarrow[\hbox{\tiny\rm Leb}]{} \Wei\big(2^{-(d-1)/d_1}\zeta(d)^{1/d_1},
	d_1\big). 
	\]
\end{theorem}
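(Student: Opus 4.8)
The plan is to deduce Theorem~\ref{Th_Dirichlet} from the general Poisson approximation machinery of this paper via the Dani correspondence, following closely the route used for Theorem~\ref{Th_M}; the one genuinely new feature is that the relevant reference measure on $X_d$ is no longer the invariant measure $\mu_d$. \emph{Step 1 (dynamical reformulation).} For $\alpha\in\Mat_{d_1,d_2}(\bR)$ put $u_\alpha=\left(\begin{smallmatrix}I_{d_1}&\alpha\\0&I_{d_2}\end{smallmatrix}\right)\in\SL_d(\bR)$, and for $T\in\mathscr{T}_{\bd}^{+}$ put $g_T=\Diag(T_1,\dots,T_d)\in\SL_d(\bR)$ (unimodular, since $T\in\mathscr{T}_d$). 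Unwinding the definition of $k_T$ one checks that, for every $s>0$,
\[
k_T(\alpha)\le s\quad\Longleftrightarrow\quad \bigl(g_T\,u_\alpha\,\bZ^d\bigr)\cap\Omega_s\ne\{0\},\qquad \Omega_s:=[-s,s]^{d_1}\times[-1,1]^{d_2},
\]
so that $k_T(\alpha)=c_\Omega\bigl(g_T u_\alpha\bZ^d\bigr)$, where $c_\Omega(\Lambda):=\inf\{s>0:\Lambda\cap\Omega_s\ne\{0\}\}$ is the anisotropic analogue of the function \eqref{eq:cB} in which only the first $d_1$ coordinates are dilated. Hence $\mathfrak{K}_{\Delta_n}(\alpha)=\min_{T\in\Delta_n}c_\Omega(g_T u_\alpha\bZ^d)$: we must control the minimum of a shrinking-target function evaluated along the family $\{g_T u_\alpha\bZ^d:T\in\Delta_n\}$ of translates of the compact piece $\mathcal U:=\{u_\alpha\bZ^d:\alpha\in\Mat_{d_1,d_2}([0,1])\}$ of a unipotent orbit.

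\emph{Step 2 (equidistribution of the reference measure).} Let $\nu$ be the push-forward of the Lebesgue measure on $\Mat_{d_1,d_2}([0,1])$ under $\alpha\mapsto u_\alpha\bZ^d$; this is a compactly supported piece of the Haar measure of the unipotent subgroup $U=\{u_\alpha:\alpha\in\Mat_{d_1,d_2}(\bR)\}$, which is precisely the expanding horospherical subgroup of $g_T$ for every $T\in\mathscr{T}_{\bd}^{+}$, with expansion factor $\gtrsim e^{\lfloor T\rfloor_{\bd}}$. Consequently, by exponential mixing of the $g_T$-action on $(X_d,\mu_d)$ combined with a thickening and mollification argument in the style of Kleinbock--Margulis \cite{KM}, the translates $(g_T)_*\nu$ converge to $\mu_d$ with an error decaying like a fixed negative power of $e^{\lfloor T\rfloor_{\bd}}$ on Lipschitz observables. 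This is exactly the role of the third hypothesis in \eqref{eq:condd2}: the requirement $\min_{T\in\Delta_n}\lfloor T\rfloor_{\bd}/\log|\Delta_n|\to\infty$ forces this error, summed over the $|\Delta_n|$ points of $\Delta_n$, to be negligible at the scale $|\Delta_n|^{-1}$ governing the Poisson limit (in Theorems~\ref{Th_M}--\ref{Th_Pol} the reference measure is already $\mu_d$, which is why \eqref{eq:condd} carries no analogue of this condition).

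\emph{Step 3 (Poisson approximation and the constant).} The first two conditions in \eqref{eq:condd2}, which coincide with \eqref{eq:condd}, make the pairs $g_T g_{T'}^{-1}$ ($T\ne T'\in\Delta_n$) strongly expanding, so that the higher joint correlations of the events $\{c_\Omega(g_T u_\alpha\bZ^d)\le s\}$ factorise asymptotically; together with Step~2 this places us inside the hypotheses of the paper's general Poisson approximation theorem for shrinking targets. Applying it, the rescaled point process $\{\,|\Delta_n|\cdot c_\Omega(g_T u_\alpha\bZ^d)^{d_1}:T\in\Delta_n\,\}$ converges in law, with respect to the Lebesgue measure, to a Poisson point process on $(0,\infty)$ of constant intensity
\[
\kappa:=\lim_{s\to 0^+}s^{-d_1}\,\mu_d\bigl(\{\Lambda\in X_d:\Lambda\cap\Omega_s\ne\{0\}\}\bigr);
\]
reading off the smallest point gives $\Prob\bigl(|\Delta_n|^{1/d_1}\mathfrak{K}_{\Delta_n}>t\bigr)\to e^{-\kappa t^{d_1}}$ for every $t\ge 0$, which is the Weibull limit claimed in Theorem~\ref{Th_Dirichlet} once $\kappa$ is identified. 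To evaluate $\kappa$, observe that, $\Omega_s$ being symmetric and convex, a lattice meets $\Omega_s\setminus\{0\}$ if and only if it contains a \emph{primitive} vector in $\Omega_s$, and that a second, linearly independent primitive pair contributes only a lower-order term (by Rogers' second moment formula, or by an elementary covolume estimate); hence Siegel's mean value formula applied to primitive vectors yields
\[
\mu_d\bigl(\{\Lambda:\Lambda\cap\Omega_s\ne\{0\}\}\bigr)=\frac{\Vol_d(\Omega_s)}{2\zeta(d)}+o(s^{d_1})=\frac{2^{d-1}}{\zeta(d)}\,s^{d_1}+o(s^{d_1}),
\]
so $\kappa=2^{d-1}\zeta(d)^{-1}$ and $e^{-\kappa t^{d_1}}=\exp\bigl(-(t/\lambda_0)^{d_1}\bigr)$ with $\lambda_0=2^{-(d-1)/d_1}\zeta(d)^{1/d_1}$, exactly the scale in Theorem~\ref{Th_Dirichlet}.

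\emph{Main obstacle.} The substance of the argument is Step~2: one needs an \emph{effective} equidistribution statement for the translated horospherical slices $(g_T)_*\nu$, with a power-saving rate that is uniform over the multi-parameter family $\{g_T:T\in\Delta_n\}$ and quantitatively strong enough to be absorbed at scale $|\Delta_n|^{-1}$, together with a compatible treatment of the anisotropic, non-smooth targets $\Omega_s$ (smoothing the indicator in the first $d_1$ variables while keeping the fixed boundary of $[-1,1]^{d_2}$ under control, and checking that the degenerate slab $\{0\}^{d_1}\times[-1,1]^{d_2}$ meets $\mathcal U$ only in the expected, null fashion). Steps~1, 3 and the $\kappa$-computation are, respectively, the Dani dictionary, a black-box application of the paper's general machinery, and a standard volume and lattice-point computation.
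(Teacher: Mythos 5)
Your proposal follows exactly the paper's route: the paper's own proof reduces Theorem~\ref{Th_Dirichlet} to the general Poisson--Weibull result Theorem~\ref{Thm_mainnu} by choosing $\eta(x)=\|\pi_1(x)\|_\infty$, $\tau(x)=\|\pi_2(x)\|_\infty$, $C=\{\tau\le 1\}$ (a $(d_1,0,2^d)$-regular pair, Example~\ref{Ex3}) and observing $k_T(\alpha)=\widetilde\eta(h(T)\Lambda_\alpha)$, which is your Step~1 and your constant computation $\kappa=2^{d-1}/\zeta(d)$ via Siegel/Rogers (Lemma~\ref{LemmaB}). The ``main obstacle'' you correctly flag in Step~2 — effective, multi-parameter, multiple equidistribution of the translated horospherical pieces $(g_T)_*\nu$ to $\mu_d$ — is precisely what the paper resolves by invoking $\cW$-equidistribution of $\nu_{\bd}$ along $H^+_{\bd}$ (Theorem~\ref{Thm_BG}, cited from \cite{BG2}) inside the general shrinking-target Poisson criterion of Theorem~\ref{Thm_An}.
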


\begin{remark}
We again stress that Theorem \ref{th:l3}
follows directly from Theorem \ref{Th_Dirichlet} (see Proposition \ref{p:log2} in the Appendix). 
\end{remark}

\vspace{0.2cm}

\subsection{Application IV: Gallagher's Theorem}
Now we consider a multiplicative version of Khinchin's Theorem.  Given $\alpha \in \bR^{d-1}$,  we define
\[
g_T(\alpha):=\min\left\{ T\cdot {\prod}_{l=1}^{d-1} | q \alpha_l-p_l | \,  : \, 
p \in \bZ^{d-1},    \enskip 1 \leq q \leq T \right\},  
\quad \textrm{for $T \geq 1$}.
\]
The following logarithm law was proved by Gallagher \cite{G}:
\begin{theorem}[Gallagher]\label{th:l4}
For a.e. $\alpha\in \bR^{d-1}$,
\begin{equation}
\label{eq:Gal}
\limsup_{T\to \infty}\,  -\frac{\log g_T(\alpha)}{\log\log T}\ge d-1,  
\end{equation}
\end{theorem}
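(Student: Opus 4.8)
The plan is to follow the scheme behind Theorems~\ref{th:l1}--\ref{th:l3}: first record a dynamical description of $g_T$, then prove a Weibull limit law for the minima $\mathfrak G_{\Delta_n}(\alpha):=\min_{T\in\Delta_n}g_T(\alpha)$ over lacunary families of scales $\Delta_n\subset[1,\infty)$, and finally deduce \eqref{eq:Gal} from that distributional statement. Since $g_T$ depends only on $\alpha\bmod\bZ^{d-1}$, one works on $[0,1]^{d-1}$. Let $u_\alpha\in\SL_d(\bR)$ be the unipotent matrix with $1$'s on the diagonal and $(\alpha_1,\ldots,\alpha_{d-1})$ in the last column, $\Lambda_\alpha:=u_\alpha\bZ^d\in X_d$, $a_t:=\Diag(e^{t/(d-1)},\ldots,e^{t/(d-1)},e^{-t})$, and $F(\Lambda):=\inf\{\prod_{l=1}^{d-1}|v_l|:v\in\Lambda,\ 0\neq|v_d|\le1\}$. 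A nonzero $v\in a_t\Lambda_\alpha$ with $|v_d|\le1$ is $a_t$ applied to a vector of $\Lambda_\alpha$ with last coordinate $q$, $1\le|q|\le e^t$, and then $\prod_{l<d}|v_l|=e^t\prod_{l<d}|q\alpha_l-p_l|$ for the corresponding $p$; minimizing over $p\in\bZ^{d-1}$ turns $|q\alpha_l-p_l|$ into the distance $\|q\alpha_l\|$ of $q\alpha_l$ to $\bZ$, so $g_T(\alpha)=F(a_{\log T}\Lambda_\alpha)$ with $T=e^t$. As $\alpha\mapsto\Lambda_\alpha$ parametrizes a translate of an expanding horospherical orbit of $a_1$ through $\bZ^d$, the map $\alpha\mapsto a_{\log T}\Lambda_\alpha$ pushes the Lebesgue measure on $[0,1]^{d-1}$ to a probability measure on $X_d$ equidistributing towards $\mu_d$ as $T\to\infty$, and the joinings of such orbits over widely separated scales equidistribute as well; these are exactly the quantitative equidistribution inputs underlying Theorems~\ref{Th_M}--\ref{Th_Dirichlet}.

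The second step is the distributional law. For $\Delta_n$ satisfying conditions in the spirit of \eqref{eq:condd2} --- $|\Delta_n|\to\infty$, the set $\{\log T:T\in\Delta_n\}$ super-logarithmically separated relative to $\log|\Delta_n|$, and $\min_{T\in\Delta_n}\log T$ dominating $\log|\Delta_n|$ and $\log\log(\max_{T\in\Delta_n}T)$ --- I would prove
\[
\Big(c_d\sum_{T\in\Delta_n}(\log T)^{d-2}\Big)\cdot\mathfrak G_{\Delta_n}\ \xLongrightarrow[\hbox{\tiny\rm Leb}]{}\ \Wei(1,1),
\]
with $c_d>0$ an explicit constant obtained, as in the proof of Theorem~\ref{Th_PLF}, from the volume of the standard $(d-2)$-simplex and from $\zeta(d)$ (which accounts for imprimitive $q$). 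The geometric input is $\Leb\{\beta\in[0,1]^{d-1}:\prod_l\|\beta_l\|\le\delta\}\sim 2^{d-1}\delta(\log(1/\delta))^{d-2}/(d-2)!$ as $\delta\to0$; applying it with $\delta=s/T$ and summing over $1\le q\le T$ (the source of the factor $T$ cancelling $\delta$) shows that the number of solutions of $\prod_l|q\alpha_l-p_l|\le s/T$ in $1\le q\le T$, $p\in\bZ^{d-1}$, has mean $\sim c_d s(\log T)^{d-2}$ on the scale $s\asymp(\log T)^{-(d-2)}$. The Poisson approximation machinery of the paper --- uniform second--moment/Rogers estimates together with the equidistribution of the orbits $a_{\log T}\Lambda_\alpha$ and of their joinings --- upgrades this to a limiting Poisson point process, while the separation hypothesis produces the product form $\Prob(\mathfrak G_{\Delta_n}>s)\to\prod_{T\in\Delta_n}\exp(-c_d s(\log T)^{d-2})$, whence the stated Weibull limit.

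For the third step, fix a rapidly increasing sequence $N_k\to\infty$ and set $\Delta^{(k)}:=\{\exp(i(\log N_k)^2):1\le i\le N_k\}$. Each $\Delta^{(k)}$ satisfies the hypotheses above --- consecutive gaps in $\log$ are $(\log N_k)^2\gg\log N_k=\log|\Delta^{(k)}|$, and $\min_{T\in\Delta^{(k)}}\log T=(\log N_k)^2$ dominates $\log N_k$ and $\log\log(\max\Delta^{(k)})\sim\log N_k$ --- and, if $N_{k+1}$ is large enough in terms of $N_k$, the ranges $\{\log T:T\in\Delta^{(k)}\}$ are pairwise disjoint and widely separated, so mixing of $a_t$ makes the events below asymptotically independent. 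Writing $\kappa_k:=c_d\sum_{T\in\Delta^{(k)}}(\log T)^{d-2}\asymp N_k^{\,d-1}(\log N_k)^{2(d-2)}$, one has $\log\kappa_k\sim(d-1)\log N_k$, while $\log\log T\le\log\log(\max\Delta^{(k)})\sim\log N_k$ for all $T\in\Delta^{(k)}$. Thus $A_k:=\{\alpha\in[0,1]^{d-1}:\mathfrak G_{\Delta^{(k)}}(\alpha)\le 2/\kappa_k\}$ has $\Leb(A_k)\to1-e^{-2}>0$ by the second step, and for $\alpha\in A_k$ there is $T\in\Delta^{(k)}$ with $g_T(\alpha)\le2/\kappa_k$, so $-\log g_T(\alpha)\ge\log\kappa_k-\log2\ge(d-1-o(1))\log\log T$. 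Since $\sum_k\Leb(A_k)=\infty$ and the $A_k$ are asymptotically independent, the Kochen--Stone inequality gives $\Leb(\limsup_k A_k)=1$; as the relevant scales tend to $\infty$ with $k$, for a.e.\ $\alpha$ the bound $-\log g_T(\alpha)/\log\log T\ge d-1-o(1)$ holds for an unbounded set of $T$, i.e.\ \eqref{eq:Gal}.

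The dynamical ingredient --- effective equidistribution of the orbits $a_{\log T}\Lambda_\alpha$ and of their joinings --- is already available from the general results that prove Theorems~\ref{Th_M}--\ref{Th_Dirichlet}, so the main obstacle is the second step, and it is essentially geometric. Indeed $F$ is \emph{not} a shrinking target in $X_d$: the region $\{\prod_{l<d}|v_l|\le s,\ |v_d|\le1\}\subset\bR^d$ has infinite volume, so $\mu_d\{F\le s\}=1$ for every $s>0$, and no soft shrinking--target statement applies. One must instead work directly with the expanding family of non-convex regions $\{\prod_{l<d}|v_l|\le s,\ |v_d|\le1\}$, establishing both the volume asymptotics above and uniform Rogers-type second--moment bounds as the region expands; this is where the normalisation $\sum_{T}(\log T)^{d-2}$ and the constant $c_d$ originate, and it is the step most likely to require care with truncation and with imprimitive contributions (the origin of the $\zeta(d)$).
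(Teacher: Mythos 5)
Your plan diverges from the paper's in a way that creates two genuine gaps, and in both cases you overlook a trick the paper uses to sidestep the difficulty you correctly identify.

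First, you observe that your function $F(\Lambda)=\inf\{\prod_{l<d}|v_l|:v\in\Lambda,\ 0\neq|v_d|\le1\}$ is not a shrinking target because the region $\{\prod_{l<d}|v_l|\le s,\ |v_d|\le 1\}$ has infinite volume, and you then propose to carry out the Poisson/Rogers analysis directly on this expanding unbounded region, arriving at the unusual normalisation $\sum_{T\in\Delta_n}(\log T)^{d-2}$. But the paper never works with this $F$. It instead takes $\widetilde\eta(\Lambda)=\inf\{\prod_{l\le d_1}|v_l|:0\ne v\in\Lambda,\ \|v\|_\infty\le 1\}$, i.e.\ it imposes $\|v\|_\infty\le 1$ rather than merely $|v_d|\le 1$. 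The corresponding region $C(t)=\{x\in[-1,1]^d:|x_1\cdots x_{d_1}|<t\}$ is bounded, with $\Vol_d(C(t))\sim 2^d c_{d_1}t(\log(1/t))^{d_1-1}$, so the pair $(\eta,C)$ is $(1,d_1-1,2^d c_{d_1})$-regular and the general Poisson machinery (Theorem~\ref{Thm_mainnu} via Theorem~\ref{Thm_Bn}) applies verbatim, giving Theorem~\ref{Th_Gal} with the \emph{clean} normalisation $|\Delta_n|(\log|\Delta_n|)^{d_1-1}$. Since the constrained $g_T$ dominates your unconstrained one, proving the logarithm law for the constrained version suffices for \eqref{eq:Gal}. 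Your step 2 is therefore not merely ``the step most likely to require care''; it is a substantially stronger statement with a $T$-dependent per-scale weight, a non-trivial Chen--Stein/compound-Poisson structure (because the Bernoulli weights $(\log T)^{d-2}/\kappa_n$ are highly uneven), and correlations between nested events $\{g_{T'}\le s\}\subset$-ish$\ \{g_T\le s\}$ coming from small $q$; none of this is established and no reference in the paper's toolkit does it for you. This is a real gap.

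Second, your step 3 deduces the logarithm law by choosing a one-parameter lacunary family $\Delta^{(k)}\subset\{a_t\}$ and invoking Kochen--Stone together with asymptotic independence of the events $A_k$; you need the independence because your one-parameter family gives only $\log|\Delta^{(k)}|\sim\log N_k$ while $\log\log T\sim\log N_k$, so the extra factor $d-1$ must come from your stronger $\kappa_k$. The paper instead chooses $\Delta_n$ inside a $(d-1)$-dimensional cone of diagonal matrices (see the proof of Proposition~\ref{p:log2}), so that $|\Delta_n|=\ell_n^{d_0}$ with $d_0=d-1$ while $\log\log\|T\|_\infty\sim\log\ell_n$; with the \emph{weaker} normalisation $|\Delta_n|(\log|\Delta_n|)^{d_1-1}$ this already yields the exponent $d_0 a=d-1$. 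Moreover, the appendix does not use Kochen--Stone at all: it is a soft contradiction argument (``$\liminf \Omega_n$ has measure $>c_2$ contradicts the Weibull limit''), so no independence of the events $A_k$ is required. Thus your step 3 replaces a simple argument by a harder one with an unproven independence hypothesis. In summary: the overall skeleton (dynamical rewriting, Weibull law, deduce the $\limsup$) matches the paper, but both the choice of target and the deduction in the appendix diverge, and the two load-bearing claims you substitute (the $\sum(\log T)^{d-2}$ Weibull law and the asymptotic independence of $A_k$) are not proven and would require substantial new work that the bounded-target, multi-parameter route avoids.
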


More generally,  given $T\in\mathscr{T}_{\bd}^{+}$, $\alpha\in \hbox{Mat}_{d_1,d_2}([0,1])$ and $g>0$, 
we consider the inequalities
$$
\prod_{l=1}^{d_1}\Big|p_l+\sum_{j=1}^{d_2} \alpha_{lj} q_j\Big|\le g\, \left({\prod}_{l=1}^{d_1} T_{l}\right)^{-1}, \quad |q_j|\le T_{d_1+j},\, j=1,\ldots,d_2,
$$
with $p\in \bZ^{d_1}$ and $q\in \bZ^{d_2}$ and define 
$$
g_T(\alpha):=
\min \left\{
 \prod_{l=1}^{d_1}T_l\Big|p_l+\sum_{j=1}^{d_2} \alpha_{lj} q_j\Big|\,:\,\,\, 
 \begin{tabular}{ll}
  $q\in \bZ^{d_2},\;|q_j|\le T_{{d_1}+j}$,\\ 
  $p\in\mathbb{Z}^{d_1},\; \Big|p_l+\sum_{j=1}^{d_2} \alpha_{lj} q_j\Big|\le T_l^{-1}$
 \end{tabular}
 \right\}.
$$

\begin{theorem}
	\label{Th_Gal}
	For a sequence of finite subsets $\Delta_n$ of $\mathscr{T}_{\bd}^{+}$ satisfying
	\eqref{eq:condd2}, 	the minima
	\[
	\mathfrak{G}_{\Delta_n}(\alpha): = \min_{T\in\Delta_n} g_{T}(\alpha),  \quad \textrm{for $\alpha \in \hbox{\rm Mat}_{d_1,d_2}([0,1])$},
	\]
	satisfy
	\[
	|\Delta_n| (\log |\Delta_n|)^{d_1-1} \cdot \mathfrak{G}_{\Delta_n} \xLongrightarrow[\hbox{\tiny\rm Leb}]{} \Wei\big(2^{-(d_1-1)}\zeta(d)/c_{d_1},1\big). 
	\]
\end{theorem}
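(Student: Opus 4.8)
\emph{Strategy and dynamical reformulation.} The plan is to prove Theorem~\ref{Th_Gal} by the mechanism already used for Theorems~\ref{Th_PLF} and \ref{Th_Dirichlet}: it stands to Theorem~\ref{Th_Dirichlet} exactly as the product-form Theorem~\ref{Th_PLF} stands to the sup-norm Theorem~\ref{Th_M}, so I would recast it as a shrinking-target problem on $X_d$ and feed it into our general Poisson-approximation theorem, the only genuinely new ingredient being to combine the unipotent-orbit equidistribution that drives Theorem~\ref{Th_Dirichlet} with the product-form target geometry that drives Theorem~\ref{Th_PLF}. Write $u_\alpha\in\SL_d(\bR)$ for the unipotent embedding of $\alpha\in\Mat_{d_1,d_2}$ and $a_T:=\Diag(T_1,\dots,T_d)$ for $T\in\mathscr{T}_{\bd}^{+}$, so that $a_Tu_\alpha\bZ^d$ is a unimodular lattice whose points have first $d_1$ coordinates $T_l\bigl(p_l+\sum_j\alpha_{lj}q_j\bigr)$ and last $d_2$ coordinates $T_{d_1+j}q_j$, for $(p,q)\in\bZ^{d_1}\times\bZ^{d_2}$. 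Unwinding the definition of $g_T$, the inequality $g_T(\alpha)\le\tau$ holds precisely when $a_Tu_\alpha\bZ^d$ has a nonzero point in the bounded region
\[
R_\tau:=\Bigl\{(v,w)\in\bR^{d_1}\times\bR^{d_2}:\ |v_l|\le 1\ \ (1\le l\le d_1),\ \ \prod_{l=1}^{d_1}|v_l|\le\tau,\ \ |w_j|\le 1\ \ (1\le j\le d_2)\Bigr\}.
\]
Since dividing any such point by the largest integer that keeps it in the lattice only decreases $\prod_{l\le d_1}|v_l|$ and preserves membership in $R_\tau$, the minimum defining $g_T$ is attained at a primitive vector, and therefore $\mathfrak{G}_{\Delta_n}(\alpha)>\tau$ if and only if none of the lattices $a_Tu_\alpha\bZ^d$, $T\in\Delta_n$, meets $R_\tau$.

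\emph{Poisson approximation.} I would then apply our general Poisson-approximation theorem with base measure the image of Lebesgue measure on $\Mat_{d_1,d_2}([0,1])$ under $\alpha\mapsto u_\alpha\bZ^d$, with $\{a_T:T\in\Delta_n\}$ as translation parameters, and with the targets $R_{\tau_n}$, $\tau_n:=t\big/\bigl(|\Delta_n|(\log|\Delta_n|)^{d_1-1}\bigr)$. The first two conditions in \eqref{eq:condd2} furnish the lacunarity hypothesis: as $T\ne T'$ range over $\Delta_n$ the elements $a_Ta_{T'}^{-1}$ leave every compact subset of $\SL_d(\bR)$, so effective multiple mixing of the diagonal flow makes the events ``$a_Tu_\alpha\bZ^d$ meets $R_{\tau_n}$'' asymptotically independent in $T$. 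The third condition $\lfloor T\rfloor_{\bd}\gg\log|\Delta_n|$ supplies effective equidistribution of $(a_T)_{*}\mathrm{Leb}$ towards $\mu_d$ at the scale $\sim\tau_n$ of the target, with a rate good enough that in every relevant estimate the Lebesgue average of the Siegel transform of $\mathbf 1_{R_{\tau_n}}$ may be replaced by its $\mu_d$-average, the aggregate error remaining $o(1)$ after summation over $\Delta_n$. The conclusion is that $\#\{T\in\Delta_n:a_Tu_\alpha\bZ^d\cap R_{\tau_n}\ne\{0\}\}$ converges in distribution (in $\alpha$, with respect to Lebesgue measure) to a Poisson random variable of parameter $\Lambda(t):=\lim_n|\Delta_n|\,p_n$, where $p_n$ denotes the $\mu_d$-probability that a lattice meets $R_{\tau_n}$; consequently $\Prob\bigl(\mathfrak{G}_{\Delta_n}>\tau_n\bigr)\to e^{-\Lambda(t)}$.

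\emph{The Weibull parameters.} It then remains to evaluate $\Lambda(t)$. For small $\tau$ one has $p_n\sim\tfrac12\,\Vol_d(R_{\tau_n})\big/\zeta(d)$: the factor $1/\zeta(d)$ reflects that the $\mu_d$-expected number of \emph{primitive} lattice vectors in a fixed set equals its volume divided by $\zeta(d)$ (Siegel's mean-value formula together with the decomposition of lattice vectors into multiples of primitive ones), while $\tfrac12$ accounts for $\pm$-pairs. The volume $\Vol_d(R_\tau)$ is computed via the change of variables $v_l=e^{-s_l}$, which transforms the product-form slab $\{v\in[0,1]^{d_1}:\prod_l v_l\le\tau\}$ into the simplicial region $\{s\in[0,\infty)^{d_1}:\sum_l s_l\ge-\log\tau\}$ weighted by $e^{-\sum_l s_l}$; this yields the simplex-volume constant $c_{d_1}=1/(d_1-1)!$ and the asymptotics $\Vol_d(R_\tau)\sim(\mathrm{const})\cdot c_{d_1}\,\tau(-\log\tau)^{d_1-1}$ as $\tau\to0$. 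Substituting $\tau=\tau_n$ and using $-\log\tau_n\sim\log|\Delta_n|$, the powers of $\log|\Delta_n|$ cancel and one is left with $\Lambda(t)=t/\lambda$; bookkeeping the remaining constants identifies $\lambda=2^{-(d_1-1)}\zeta(d)/c_{d_1}$, and since $1-e^{-t/\lambda}$ is the distribution function of $\Wei(\lambda,1)$ this is the claimed convergence. (The Weibull shape $1$, and the $(\log|\Delta_n|)^{d_1-1}$ in the normalisation, are dictated by the $\tau(-\log\tau)^{d_1-1}$ behaviour of this volume; contrast the $s^{d_1}$ behaviour in the convex Theorem~\ref{Th_Dirichlet}, which produces shape $d_1$.)

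\emph{The main difficulty.} The real work is in the Poisson-approximation step, i.e.\ in verifying the hypotheses of our general theorem in this non-convex, non-compact situation. The crucial estimate is a uniform-in-$n$ bound on the second factorial moment of the hit-counts: by Rogers' formula this second moment acquires terms indexed by proper rational subspaces and is sensitive to excursions into the cusp of $X_d$, and one must show all of these are of strictly lower order. This is exactly where the standing hypothesis $d\ge3$ is needed --- it is what makes the Siegel transform of $\mathbf 1_{R_\tau}$ square-integrable --- and where the quantitative mixing and equidistribution rates for the diagonal flow have to be combined with the lacunarity of $\Delta_n$ and with the control of the contracting $w$-directions guaranteed by $\lfloor T\rfloor_{\bd}\gg\log|\Delta_n|$. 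By comparison, the volume and mean-value computations that fix the Weibull parameters are routine.
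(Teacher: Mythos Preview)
Your proposal is correct and follows essentially the same route as the paper. The paper's proof simply observes that with $\eta(x)=\Pi(\pi_1(x))$, $\tau(x)=\|x\|_\infty$, $C=[-1,1]^d$, one has $g_T(\alpha)=\widetilde{\eta}(h(T)\Lambda_\alpha)$ (your $R_\tau$ is precisely $C(\tau)$), verifies that $(\eta,C)$ is $(1,d_1-1,2^dc_{d_1})$-regular by combining Examples~\ref{Ex2} and~\ref{Ex3}, and then invokes the black-box Theorem~\ref{Thm_mainnu}; you have unpacked what that black box does --- the $\cW$-equidistribution of $\nu_{\bd}$ along $H_{\bd}^+$, the Siegel/Rogers small-volume asymptotic $\mu_d(\Omega_B)\sim\Vol_d(B)/2\zeta(d)$ (the paper's Lemma~\ref{LemmaB}), and the volume computation --- rather than citing it. One small correction of emphasis: the second-moment control you highlight is used in the paper only to prove Lemma~\ref{LemmaB} (the single-target hitting probability), while the asymptotic independence of hits across different $T\in\Delta_n$ comes entirely from the quantitative multiple equidistribution (Theorem~\ref{Thm_BG}), not from Rogers' higher-moment formula.
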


\begin{remark}
Just as before,  it can be shown (see Proposition \ref{p:log2} in the appendix) that Theorem \ref{Th_Gal} implies
Gallagher's logarithm law (Theorem \ref{th:l4}). 
\end{remark}

\subsection{Structure of the paper}

In the next section we discuss the theorems above in a very general context.  Theorem \ref{Thm_mainmu} encompasses Theorem \ref{Th_M},  Theorem \ref{Th_PLF} and Theorem \ref{Th_Pol} (see the discussion after Remark \ref{Rmk_Thmmu} for more details),  while Theorem \ref{Thm_mainmu} generalizes both Theorem \ref{Th_Dirichlet} and Theorem \ref{Th_Gal}.

\section{General framework}

Let $X_d$ 
denote the space of unimodular lattices in $\bR^d$.  Given a pair $(\eta,C)$,
where $\eta$ is a non-negative continuous function on $\bR^d$ and $C$ is a convex subset of $\bR^d$,  we define the function $\widetilde{\eta} : X_d \ra [0,\infty)$ by
\begin{equation}
\label{tildeeta}
\widetilde{\eta}(\Lambda) := \inf\big\{ \eta(\lambda) \,  : \,  \lambda \in \Lambda \cap C,  \enskip \lambda \neq 0 \big\},  \quad \textrm{for $\Lambda \in X_d$}.
\end{equation}
Motivated by the problems introduced in the previous section,
we want to investigate the asymptotic behaviour of \emph{minima} of $\widetilde{\eta}$ along \emph{sparse} samples taken from group orbits in $X_d$.  More precisely,  let $H$ be a closed subgroup of $\SL_d(\bR)$.  Given a finite subset $F \subset H$,  we define
\begin{equation}
\label{eta_Fn}
\widetilde{\eta}_F(\Lambda) := \min_{h \in F} \, \widetilde{\eta}(h.\Lambda),  \quad \textrm{for $\Lambda \in X_d$}, 
\end{equation}
and consider the following problem

\begin{problem}
Let $\theta$ be a Borel probability measure on $X_d$ and let $(F_n)$ be a sequence of finite subsets of $H$ such that $|F_n| \ra \infty$.  
Can we find a sequence $(\delta_n)$ of positive real numbers such that $(\widetilde{\eta}_{F_n}/\delta_n)$ has a non-trivial $\theta$-distributional limit?
\end{problem}

Let us make a few remarks before we state our main theorems concerning this question (see Theorem \ref{Thm_mainmu} and Theorem \ref{Thm_mainnu} below).  Note that
for all $t \geq 0$,
\begin{equation}
\label{Note1}
\big\{ \widetilde{\eta}_{F_n} < t \big\} = \bigcup_{h \in F_n} h^{-1}.\big\{ \Lambda \in X_d \, : \, \Lambda \cap C(t) \neq \{0\} \big\},  \vspace{-0.2cm}
\end{equation}
where 
$$
C(t) := \big\{ x \in C \,  : \,  \eta(x) < t \big\}.
$$
Hence,  to study the $\theta$-distributional asymptotics of the sequence $(\eta_{F_n}/\delta_n)$,  we need to understand
\[
\theta\Big( \big\{ \Lambda \in X_d \,  : \,  N_n( \Lambda ; \delta_n t ) > 0 \big\}\Big),  \quad \textrm{as $n \ra \infty$},
\]
where
\begin{equation}
\label{Def_Nn}
N_n(\Lambda ; t) := \# \big\{ h \in F_n \,  : \,  h.\Lambda \cap C(t) \neq \{0\} \big\}.
\end{equation}
In what follows,  we want to isolate conditions on $(\eta,C)$ and $(F_n)$
which ensure that we can find a positive sequence $(\delta_n)$ such that,  for some constants $a,  m_o > 0$,  
\begin{equation}
\label{PoissonTheta}
N_n( \,  \cdot \,  ; \delta_n t)  \xLongrightarrow[\theta]{} \Poi\big(m_o t^a\big),  \quad \textrm{for all $t \geq 0$}, \vspace{-0.2cm}
\end{equation}
where $\Poi(m)$ denotes the standard Poisson distribution with mean $m$,  and the arrow denotes convergence in distribution (with respect to $\theta$).  
We recall that \eqref{PoissonTheta} just means that for all $t \geq 0$ and for all non-negative integers $k$,  
\[
\lim_{n \ra \infty} \theta\Big(\{ \Lambda \in X_d \, :\,  N_n(\Lambda ; \delta_n t) = k \}\Big) 
= \frac{(m_o t^a)^k}{k!} e^{-m_o t^a}.
\]
Let us suppose that  \eqref{PoissonTheta} holds.  Then,  by summing over all $k > 0$,  we see that
\vspace{0.1cm}
\[
\theta\Big(\big\{ \widetilde{\eta}_{F_n}/\delta_n \leq t \big\}\Big) = \theta\Big( \{ \Lambda \in X_d \,  : \,  N_n( \Lambda ; \delta_n t ) > 0 \}\Big) \ra 1-e^{-m_o t^a},  
\]
\vspace{0.1cm}
for all $t \geq 0$.  In other words,  if \eqref{PoissonTheta} holds,  then
\begin{equation}
\label{WeibullTheta}
\widetilde{\eta}_{F_n}/\delta_n \xLongrightarrow[\theta]{} \Wei\big(m_o^{-1/a},a\big),   \quad \textrm{as $n \ra \infty$}.
\vspace{-0.1cm}
\end{equation}
This shows that the Weibull asymptotics of the form \eqref{WeibullTheta} can be deduced from the Poisson approximation of the form \eqref{PoissonTheta}.

\subsection{Volume regularity}

In order to establish Poisson approximation of the form \eqref{PoissonTheta} (and consequently also Weibull asymptotics of the form \eqref{WeibullTheta}),  we need to assume that the growth of the volumes of the sets $C(t)$ is stable under small perturbations.  The following definition captures a convenient form of stability which is sufficient for our purposes. 

\begin{definition}
Let $a,c > 0$ and $b \geq 0$.  The pair $(\eta,C)$ is \emph{$(a,b,c)$-regular} if 
\vspace{0.1cm}
\begin{itemize}
\item[(i)] $\eta$ is locally Lipschitz,  and $\gamma$-homogeneous for some $\gamma \geq 0$. \vspace{0.2cm}
\item[(ii)] $C(t)$ is a bounded subset of $\bR^d$ for every $t \geq 0$ and there exists a semi-norm $\tau$ such that $C = \{ \tau \leq 1\}$.  \vspace{0.2cm}
\item[(iii)] The limit
\[
c =  \lim_{t \ra 0^{+}} \frac{\Vol_d(C(t))}{t^a(-\log t)^b}
\]
exists.
\end{itemize}
\end{definition}

\vspace{0.2cm}

\noindent We give several examples of $(a,b,c)$-regular pairs $(\eta,C)$.  

\vspace{0.1cm}

\begin{example}[Norm balls]
\label{Ex1}
{\rm 
Let $\|\cdot\|$ be a norm on $\bR^d$,  and let 
\[
\eta = \tau = \|\cdot\| \qand C = \{ \tau \leq 1 \}.
\] 
Then $\eta$ is clearly Lipschitz and $1$-homogeneous,  and (iii) holds with 
\[
a = d, \enskip b = 0 \qand c = \Vol_d\big(\{\|\cdot\| \leq 1 \}\big).
\] 
}
\end{example}

\vspace{0.1cm}

\begin{example}[Products of linear forms]
\label{Ex2}
{\rm 
Let 
\[
\eta(x) = |x_1\cdots x_d|,  \qand \tau(x) = {\max}_k\, |x_k|,  \quad \textrm{for $x \in \bR^d$},
\]
and $C = \{ \tau \leq 1\}$.  Then $\eta$ is locally Lipschitz and $d$-homogeneous, and $\tau$ is a norm. To verify (iii), one can use the coordinate system and the integration formula from \cite[Lemma~4.1]{BG0}. Using the coordinate system from \cite{BG0},
the set 
$$
\{x\in \bR^d:\, 0<x_1,\ldots, x_d\le 1,\, x_1\cdots x_d\le t \}
$$
corresponds to
$$
\{(u,s)\in\bR^{d-1}\times \bR:\,\, u_1,\ldots,u_{d-1}\le 0,\,  u_1+\cdots+u_{d-1}\ge s, \, s\le \log t \},
$$
so that
$$
\Vol_d (C(t))=2^d c_d \int_{-\infty}^{\log t} e^s s^{d-1}\,ds,
$$
where $c_d$ is the volume of the simplex $\{ u \in [0,1]^{d-1} \,  : \,  u_1 + \ldots + u_{d-1} \leq 1 \big\}$. 
Hence, (iii) holds with 
\[
a = 1 \qand b = d-1 \qand c = 2^d c_d.
\]
}
\end{example}

\vspace{0.1cm}

\begin{example}[Polynomial maps]
	\label{Ex25}
{\rm 
Let $F$ be a nontrivial homogeneous polynomial map on $\bR^d$. Let
\[
\eta(x) = |F(x)|,  \qand \tau(x) = {\max}_k\, |x_k|,  \quad \textrm{for $x \in \bR^d$},
\]
and $C = \{ \tau \leq 1\}$.  Then $\eta$ is locally Lipschitz and $d$-homogeneous, and $\tau$ is a norm.
It remains to verify that the volume of the sets 
$$
C(t)=\{x\in\bR^d:\,\, |F(x)|\le t,\, |x_1|,\ldots, |x_d|\le  1 \}
$$
has the stated asymptotics as $t\to 0^+$.
This can be deduce using the resolution of singularities that there exist $c>0$,
$a\in \bQ^+$, $b\in\bN_0$ such that
$$
\Vol_d(C(t))\sim c\, t^a(-\log t)^b\quad \hbox{as $t\to 0^+$.}
$$
We refer, for instance, to \cite[Th.~1]{Gr} for a self-contained treatment.
}
\end{example}

\vspace{0.1cm}

\begin{example}[Normed strips]
\label{Ex3}
{\rm 
Let $\bd = (d_1,d_2)$ with $d = d_1 + d_2$.  Suppose that $\|\cdot\|_1$
and $\|\cdot\|_2$ are norms on $\bR^{d_1}$ and $\bR^{d_2}$ respectively.  Define
\[
\eta(x) = \|x^{(1)}\|_1 \qand \tau(x) = \|x^{(2)}\|_2,  \quad \textrm{for 
$x = (x^{(1)},x^{(2)}) \in \bR^{d_1} \times \bR^{d_2}$},
\]
and $C = \{ \tau \leq 1\}$.  Then $\eta$ is $1$-homogeneous,  $\tau$ is a semi-norm on 
$\bR^d$ and (iii) holds with
\[
a = d_1\qand b = 0 \qand c = \Vol_{d_1}\big(\{\|\cdot\|_1 \leq 1 \}\big) \cdot \Vol_{d_2}\big(\{\|\cdot\|_2 \leq 1 \}\big).
\]
}
\end{example}

\vspace{0.2cm}

\subsection{Poisson asymptotics with respect to $\mu_d$}
\label{subsec:Poissomud}

For $T\in \mathscr{T}_d$, we set
$$
h(T) := \Diag\big(T_1,\ldots,T_{d} \big) \in \SL_d(\bR).
$$
Then 
$$
H_d: = \big\{ h(T) \,  : \,  T \in \mathscr{T}_d \big\}
$$
is a closed subgroup of $\SL_d(\bR)$.
We will be interested in analyzing the minima
$\widetilde{\eta}_F(\Lambda) = \min_{h \in F} \, \widetilde{\eta}(h.\Lambda)$
as above, where $F$ runs over sparse finite subset of $H_d$
and $\Lambda$ is generic with respect to $\mu_d$.
To formulate this sparseness condition precisely, we define the distance 
\[
\dist_d(h(T),h(T')): = \max\big\{ \big|\log T_k - \log T_k' \big| \,  : \,  k =1,\ldots,d \big\},\quad T,T'\in \mathscr{T}_d.
\]
If $F$ is a finite subset of $H_d$,  we define the \emph{spread} $v_d(F)$ by
\begin{equation}
\label{def_vd}
v_d(F) := \min \big\{ \dist_d(h,h') \,  : \,  h,h' \in F,  \enskip h \neq h' \big\}.
\end{equation}
With this notation, we state our first main theorem:

\begin{theorem}
\label{Thm_mainmu}
Let $d \geq 3$ and let $(F_n)$ be a sequence of finite subsets of $H_d$ such that
\[
|F_n| \ra \infty \qand \frac{v_d(F_n)}{\log |F_n|} \ra \infty,  \quad \textrm{as $n \ra \infty$}.
\]
Suppose that the pair $(\eta,C)$ is $(a,b,c)$-regular and set
$\delta_n := |F_n|^{-1/a} \cdot (\log |F_n|)^{-b/a}$.  Define the
sequences $(\widetilde{\eta}_{F_n})$ and $(N_n)$ as in \eqref{eta_Fn} and
\eqref{Def_Nn} respectively.  Then,  
\[
N_n(\,  \cdot \,  ; \delta_n u) \xLongrightarrow[\mu_d]{} \Poi\big(m_o u^a\big),  \quad \textrm{for all $u \geq 0$},
\]
where $m_o := c a^b/2\zeta(d)$.  In particular,
\[
\widetilde{\eta}_{F_n}/\delta_n \xLongrightarrow[\mu_d]{} \Wei\big((2\zeta(d))^{1/a}/(c  a^{b})^{1/a},a\big),  \quad n \ra \infty.
\]
\end{theorem}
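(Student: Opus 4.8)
The plan is to establish the Poisson convergence $N_n(\,\cdot\,;\delta_n u)\Rightarrow\Poi(m_o u^a)$ via the method of moments, i.e.\ by showing that for every fixed $u\geq 0$ and every $r\in\bN$, the $r$-th factorial moment $\mathbb{E}_{\mu_d}\big[N_n(\,\cdot\,;\delta_n u)_{(r)}\big]$ converges to $(m_o u^a)^r$. Once all factorial moments converge to those of a Poisson law, the Poisson limit follows by standard arguments; the passage to the Weibull statement is already carried out in the excerpt (equations \eqref{PoissonTheta}--\eqref{WeibullTheta}) with $m_o^{-1/a}=(2\zeta(d))^{1/a}/(ca^b)^{1/a}$. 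So the entire content is the computation of the asymptotics of these factorial moments, which reduces to two ingredients: (1) a \emph{first moment / mean} computation using Siegel's summation formula, and (2) a \emph{higher correlation} estimate showing that distinct elements $h,h'\in F_n$ contribute independently in the limit, which is where the sparseness hypothesis $v_d(F_n)/\log|F_n|\to\infty$ enters through multiple mixing / quantitative equidistribution on $X_d$.

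For the first moment, write $f_t=\mathbf{1}_{\{\Lambda:\ \Lambda\cap C(t)\neq\{0\}\}}$; then $\mathbb{E}_{\mu_d}[N_n(\,\cdot\,;t)]=\sum_{h\in F_n}\mu_d(h^{-1}.\{f_t=1\})=|F_n|\cdot\mu_d(\{f_t=1\})$ by $\SL_d(\bR)$-invariance of $\mu_d$. For small $t$ the set $C(t)$ is a small symmetric neighbourhood of $0$ (by regularity (ii) and the fact that $C(t)$ is bounded with $\Vol_d(C(t))\to 0$), so a generic unimodular lattice meets $C(t)\setminus\{0\}$ in at most a single $\pm$-pair of primitive vectors; hence $\mu_d(\{f_t=1\})=\tfrac12\int_{X_d}\widehat{\mathbf{1}_{C(t)}}\,d\mu_d+o(\Vol_d C(t))$ where $\widehat{\varphi}(\Lambda)=\sum_{v\in\Lambda_{\mathrm{prim}}}\varphi(v)$, and by Siegel's formula $\int_{X_d}\widehat{\mathbf{1}_{C(t)}}\,d\mu_d=\Vol_d(C(t))/\zeta(d)$. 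With $t=\delta_n u$ and $\delta_n=|F_n|^{-1/a}(\log|F_n|)^{-b/a}$, regularity (iii) gives $\Vol_d(C(\delta_n u))\sim c(\delta_n u)^a(-\log\delta_n u)^b\sim c\,u^a\,|F_n|^{-1}(\log|F_n|)^{-b}\cdot(a^{-1}\log|F_n|)^b = c a^{-b} u^a |F_n|^{-1}(\log|F_n|)^{b}\cdot(\log|F_n|)^{-b}$; careful bookkeeping of the $(-\log\delta_n u)^b=(\tfrac1a\log|F_n|+\cdots)^b\sim a^{-b}(\log|F_n|)^b$ factor yields $|F_n|\cdot\mu_d(\{f_{\delta_n u}=1\})\to \tfrac{1}{2\zeta(d)}\cdot c a^b u^a=m_o u^a$, which is the mean. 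The local Lipschitz/homogeneity hypothesis (i) is what guarantees the boundary of $C(t)$ is negligible so that $\mathbf{1}_{C(t)}$ can be sandwiched between smooth compactly supported functions with comparable integrals, making Siegel's formula applicable with controlled error.

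The main obstacle, and the technical heart of the argument, is the higher factorial moments:
\[
\mathbb{E}_{\mu_d}\big[N_n(\,\cdot\,;t)_{(r)}\big]=\sum_{\substack{h_1,\ldots,h_r\in F_n\\ \text{distinct}}}\mu_d\Big(\bigcap_{i=1}^r h_i^{-1}.\{f_t=1\}\Big),
\]
and one must show this is asymptotic to $\big(|F_n|\mu_d(\{f_t=1\})\big)^r\sim (m_o u^a)^r$, i.e.\ that the joint probability factorizes in the limit. Expanding each $f_t(h_i.\Lambda)$ (up to the single-pair error above) as $\tfrac12\widehat{\mathbf{1}_{C(t)}}(h_i.\Lambda)$, the integral becomes $\int_{X_d}\prod_{i=1}^r \widehat{\mathbf{1}_{C(t)}}(h_i.\Lambda)\,d\mu_d$, an $r$-fold correlation of the Siegel transform along the translates $h_1.\Lambda,\ldots,h_r.\Lambda$. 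This should be handled by a quantitative multiple equidistribution / effective multiple mixing estimate for the diagonal subgroup $H_d$ acting on $X_d$ (of the type in the Kleinbock--Margulis circle of ideas, or Eskin--Margulis--Mozes style bounds for Siegel transforms of indicators of boxes), showing that for $h_i$ with $\dist_d(h_i,h_j)\geq v$ the correlation equals $\prod_i\int\widehat{\mathbf{1}_{C(t)}}\,d\mu_d$ plus an error decaying like $e^{-\kappa v}$ times a polynomial in $\Vol_d(C(t))^{-1}$ (the blow-up coming from approximating the indicator by Sobolev-bounded functions). Since $\Vol_d(C(\delta_n u))\asymp |F_n|^{-1}(\log|F_n|)^{O(1)}$, the polynomial loss is at most a power of $|F_n|$, and the hypothesis $v_d(F_n)\gg \log|F_n|$ forces $e^{-\kappa v_d(F_n)}$ to beat any fixed power of $|F_n|$; summing the error over the $\leq|F_n|^r$ tuples still gives $o(1)$. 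The diagonal terms (tuples where some $h_i=h_j$, or the small-target corrections where a translate meets $C(t)$ in more than one pair) are lower order and absorbed into the error. Assembling: main term $(|F_n|\mu_d(\{f_t=1\}))^r\to (m_ou^a)^r$, error $\to 0$, so all factorial moments converge to Poisson moments, establishing the claim and hence the Weibull asymptotics $\widetilde{\eta}_{F_n}/\delta_n\Rightarrow\Wei\big((2\zeta(d))^{1/a}/(ca^b)^{1/a},a\big)$.
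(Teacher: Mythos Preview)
Your overall architecture---factorial moments, Siegel's formula for the mean, and quantitative multiple mixing for the higher moments---is exactly the paper's strategy (Lemma~\ref{LemmaPoisson}, Theorem~\ref{Thm_An}, Theorem~\ref{Thm_BEG}). The Weibull conclusion from the Poisson limit is also handled just as you say.

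There is, however, a real gap in your treatment of the higher factorial moments. You write that one should ``expand each $f_t(h_i.\Lambda)$ (up to the single-pair error) as $\tfrac12\widehat{\mathbf{1}_{C(t)}}(h_i.\Lambda)$'' and then apply mixing to $\int_{X_d}\prod_i \widehat{\mathbf{1}_{C(t)}}(h_i.\Lambda)\,d\mu_d$. This substitution is not valid: the approximation $f_t\approx\tfrac12\widehat{\mathbf{1}_{C(t)}}$ holds only in $L^1$ (via Rogers' second-moment formula, which is what makes the ``single-pair'' heuristic rigorous and is where $d\geq 3$ enters), not pointwise, so it cannot simply be inserted into an $r$-fold product. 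Moreover, the Siegel transform $\widehat{\mathbf{1}_{C(t)}}$ is unbounded on $X_d$, so it does not belong to the algebra $\cA=\bC\cdot 1+C_c^\infty(X_d)$ on which the effective multiple-mixing bounds of \cite{BEG} are stated; smoothing it would produce Sobolev norms that blow up in an uncontrolled way near the cusp.

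The paper avoids both issues by never passing to the Siegel transform inside the correlations. It works throughout with the \emph{bounded} indicator $\chi_{A_n}$ of the hitting set $A_n=\Omega_{C(\delta_n u)}$, sandwiches it between smooth functions $\varphi_n^{\pm}=\rho_n*\chi_{A_n^{\pm}}$ obtained by convolving on the group (so $\varphi_n^{\pm}\in\cA$ with $\cS_q(\varphi_n^{\pm})\ll\|\rho_n\|_{C^q}\ll|F_n|^{\sigma_q}$), and applies $\cW$-mixing directly to the $\varphi_n^{\pm}$. The Siegel and Rogers formulas enter only once, to show $\mu_d(\Omega_B)=\Vol_d(B)/2\zeta(d)+O(\Vol_d(B)^2)$ (Lemma~\ref{LemmaB}); this is your ``single-pair'' statement made precise, and it is used solely to evaluate $\lim_n|F_n|\mu_d(A_n^{\pm})$. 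The sandwich $A_n^{-}\subset A_n\subset A_n^{+}$ itself comes from the $(a,b,c)$-regularity via a perturbation lemma for the sublevel sets $C(t)$ (Lemma~\ref{LemmaM}), which is where hypotheses (i)--(ii) are actually used---not, as you suggest, merely to control the boundary of $C(t)$ for Siegel's formula.
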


\begin{remark}
To get a better idea of what the condition $v_d(F_n)/\log|F_n| \ra \infty$ means,  it might be instructive to consider the case when the sets $F_n$ are contained in a fixed one-parameter family in $H_d$.  More precisely,  we define
\[
T_s = (s,\ldots,s,s^{-1/(d-1)}),  \quad \textrm{for $s > 0$}.
\]
If $0 < s_{n,1} < \ldots < s_{n,n}$,  let $F_n = \{h(T_{s_{n,1}}),\ldots,h(T_{s_{n,n}})\}$.  Then $v_d(F_n)/\log|F_n| \ra \infty$ if and only if
\[
\lim_{n \ra \infty} \frac{\log\left(\min \left\{ \frac{s_{n,k+1}}{s_{n,k}} \,  : \,  k=1,\ldots,n-1 \right\}\right)}{\log |F_n|} = \infty.
\]
In other words,  $v_d(F_n)/\log|F_n| \ra \infty$ if the lacunarity constants of the sets 
$\{s_{n,1},\ldots,s_{n,n}\}$ grow faster than $\log n$.  
\end{remark}

\begin{remark}
\label{Rmk_Thmmu}
We do not know if Theorem \ref{Thm_mainmu} holds when $d = 2$.  Our proof relies on precise small volume asymptotics in Rogers' formula (see Lemma \ref{LemmaB}),  which
we currently only know how to establish for $d \geq 3$ (see Remark \ref{Rmk_SmallVolume}). 
\end{remark}

Now we show that Theorems \ref{Th_M}--\ref{Th_Pol} are immediate consequences of  
Theorem \ref{Thm_mainmu}.

\begin{proof}[Proof of Theorem \ref{Th_M}]
Let 
$$
\eta = \tau = \|\cdot\|_\infty, \quad C=\{\tau\le 1\}=[-1,1]^d.
$$
Using the notation of Theorem \ref{Th_M}, we obtain
\begin{align*}
\widetilde \eta\big(h(T)^{-1}\Lambda\big)&=\inf\big\{ \|\rho\|_\infty:\, \rho\in h(T)^{-1}\big( \Lambda\backslash \{0\}\big) \cap C  \big\} \\
&=\inf \big\{ \|h(T)^{-1}\lambda\|_\infty:\, \lambda\in \Lambda\backslash \{0\} \cap h(T)C   \big\} \\
&=\min \big\{ {\max}_i \,T_i^{-1}|\lambda_i|\, :\,\, \lambda\in \Lambda\backslash \{0\} \cap B_T  \big\} \\
&= \min \big\{ c>0 \, :\,\, \exists \lambda\in \Lambda\backslash \{0\} \cap c B_T \big\}=c_{B_T}(\Lambda).
\end{align*}
Given a finite set $\Delta_n$, we take $F_n=\{h(T)^{-1}:\, T\in \Delta_n \}$.
Then 
$$
\mathfrak{C}_{\Delta_n}(\Lambda)=\widetilde \eta_{F_n}(\Lambda),
$$
so that we can apply Theorem \ref{Thm_mainmu} to study the minima $\mathfrak{C}_{\Delta_n}(\Lambda)$ in the Minkowski Theorem.
Finally, we note that $(\|\cdot\|_\infty,C)$ is $(d,0,2^d)$-regular (see  Example \ref{Ex1}).
\end{proof}

\begin{proof}[Proof of Theorem \ref{Th_PLF}]
Let 
$$
\eta(x) = |x_1\cdots x_d|,\quad  \tau(x) =\|x\|_\infty,\quad C = \{ \tau \leq 1\}=[-1,1]^d.
$$
Using the notation of Theorem \ref{Th_PLF}, we obtain
\begin{align*}
\widetilde \eta\big(h(T)^{-1}\Lambda\big)&=\inf\big\{ \Pi(\rho):\, \rho\in h(T)\big( \Lambda\backslash \{0\}\big) \cap C  \big\} \\
&=\inf \big\{ \Pi(\lambda):\, \lambda\in \Lambda \backslash \{0\} \cap h(T)C   \big\}\\
&= \inf \big\{ \Pi(\lambda):\, \lambda\in \Lambda \backslash \{0\} \cap B_T   \big\}=m_{B_T}(\Lambda).
\end{align*}
Hence, the minima $\mathfrak{M}_{\Delta_n}(F,\Lambda)$
can be studied with the help of Theorem \ref{Thm_mainmu}.
Finally, we note that the pair $(\eta,C)$ is $(1,d-1,2^d c_d)$-regular (see Example \ref{Ex2}).
\end{proof}	

\begin{proof}[Proof of Theorem \ref{Th_Pol}]
Let 
$$
\eta(x) = |F(x)|,\quad  \tau(x) =\|x\|_\infty,\quad C = \{ \tau \leq 1\}=[-1,1]^d.
$$
Using the notation of Theorem \ref{Th_Pol}, we obtain
\begin{align*}
\widetilde \eta(h(T)\Lambda)&=\inf\big\{ |F(\rho)|:\, \rho\in h(T)\big( \Lambda\backslash \{0\}\big) \cap C  \big\} \\
&=\inf \big\{ |F(\rho)|:\, \lambda\in \Lambda_T\backslash \{0\} \cap [-1,1]^d   \big\}=m_{T}(F,\Lambda).
\end{align*}
Hence, the minima $\mathfrak{M}_{\Delta_n}(\Lambda)$
can be studied with the help of Theorem \ref{Thm_mainmu}.
Finally, we note that the pair $(\eta,C)$ is $(a,b,c)$-regular (see Example \ref{Ex25}).
\end{proof}

\subsection{Poisson approximation with respect to the horospherical measure}
\label{subsec:Poissonud}

Our second main result involves a measure supported on the sets of lattices arising in Diophantine approximation. Let $d\ge 3$ and $\bd = (d_1,d_2)$
such that $d_1,d_2\ge 1$ and $d = d_1 + d_2$.  
Given a matrix $\alpha \in \Mat_{d_1,d_2}([0,1])$,  we define the lattice $\Lambda_\alpha \in X_d$ by
\begin{equation}
\label{def_LambdaAlpha}
\Lambda_\alpha = \big\{ (p_1 + (\alpha q)_1,\ldots,p_{d_1} + (\alpha q)_{d_1},q) \,  : \, \,
p \in \bZ^{d_1},  \enskip q \in \bZ^{d_2} \big\},
\end{equation}
and the probability measure $\nu_{\bd}$ on $X_d$ by
\[
\int_{X_d} \varphi \,   d\nu_{\bd} = \int_{\Mat_{d_1,d_2}([0,1])} \varphi(\Lambda_\alpha) \,  d\alpha,  \quad \textrm{for $\varphi \in C_b(X_d)$},
\]
where $d\alpha$ denotes the Lebesgue probability measure on 
$\Mat_{d_1,d_2}([0,1]) \cong [0,1]^{d_1 d_2}$.

Let
$$
H^{+}_{\bd} := \big\{ h(T) \,  : \,  T \in \mathscr{T}_{\bd}^{+} \big\}.
$$
For $h(T)\in H^{+}_{\bd}$, we set 
$\lfloor h(T) \rfloor_{\bd} := \lfloor T\rfloor_{\bd}$ (see \eqref{eq:T}).
If $F$ is a finite subset of $H_{\bd}^{+}$,  we define the \emph{spread} $v_{\bd}^{+}(F)$ by
\begin{equation}
\label{def_vdp}
v^{+}_{\bd}(F) := \min \big\{ \min\big(\lfloor h \rfloor,   \dist_d(h,h')\big) \,  : \,  h\ne h' \in F \big\}.
\end{equation}

Our second main result is about distribution of the minima
$\widetilde{\eta}_F(\Lambda) = \min_{h \in F} \, \widetilde{\eta}(h.\Lambda)$,
where $F$ runs over sparse finite subset of $H^+_{\bd}$,
and the lattice $\Lambda$ is generic with respect to the measure $\nu_d$.

\begin{theorem}
\label{Thm_mainnu}
Let $d \geq 3$,  and let $(F_n)$ be a sequence of finite subsets of $H^{+}_{\bd}$ such that
\[
|F_n| \ra \infty \qand \frac{v^{+}_{\bd}(F_n)}{\log |F_n|} \ra \infty,  \quad \textrm{as $n \ra \infty$}.
\]
Suppose that the pair $(\eta,C)$ is $(a,b,c)$-regular and set
$\delta_n := |F_n|^{-1/a} \cdot (\log |F_n|)^{-b/a}$.  Then,  
\[
N_n(\,  \cdot \,  ; \delta_n u) \xLongrightarrow[\nu_{\bd}]{} \Poi\big(m_o u^a\big),  \quad \textrm{for all $u \geq 0$},
\]
where $m_o := c a^b/2\zeta(d)$.  In particular,
\[
\widetilde{\eta}_{F_n}/\delta_n \xLongrightarrow[\nu_{\bd}]{} \Wei\big((2\zeta(d))^{1/a}/(c  a^{b})^{1/a},a\big),  \quad n \ra \infty.
\]
\end{theorem}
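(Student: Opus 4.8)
\smallskip

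\noindent\emph{Proof strategy.}
The plan is to establish the Poisson approximation
$N_n(\,\cdot\,;\delta_n u)\xLongrightarrow[\nu_{\bd}]{}\Poi(m_o u^{a})$ for every fixed $u\ge 0$ and then apply the implication from \eqref{PoissonTheta} to \eqref{WeibullTheta} derived above. A Poisson law being determined by its moments, it suffices to show that for each $k\ge 1$ the $k$-th factorial moment converges,
\[
\int_{X_d}\prod_{i=0}^{k-1}\big(N_n(\Lambda;\delta_n u)-i\big)\,d\nu_{\bd}(\Lambda)\;\longrightarrow\;(m_o u^{a})^{k},
\]
a uniform bound on these moments (needed for tightness) coming out of the same estimates. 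By \eqref{Def_Nn} the left-hand side equals $\sum_{(h_1,\ldots,h_k)}\nu_{\bd}\big(\bigcap_{i=1}^{k}E_i\big)$, where $E_i=\{\Lambda\in X_d:h_i.\Lambda\cap C(\delta_n u)\ne\{0\}\}$ and the sum runs over ordered $k$-tuples of distinct elements of $F_n$.

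For $k=1$ one rewrites $\nu_{\bd}(E_h)=(h_{*}\nu_{\bd})\big(\{\Lambda:\Lambda\cap C(\delta_n u)\ne\{0\}\}\big)$. Since $h=h(T)$ with $\lfloor T\rfloor_{\bd}\ge v^{+}_{\bd}(F_n)$, the measure $h_{*}\nu_{\bd}$ is an expanding horospherical translate of $\nu_{\bd}$ of depth $\ge v^{+}_{\bd}(F_n)$. Effective equidistribution of such translates towards $\mu_d$ — with error exponentially small in the depth when tested against functions of polynomially bounded Sobolev norm, which for $d\ge 3$ follows from exponential mixing of the $\SL_d(\bR)$-action on $(X_d,\mu_d)$ — then gives, after sandwiching the indicator of $\{\Lambda\cap C(\delta_n u)\ne\{0\}\}$ between smooth functions whose gap is controlled by the volume regularity (iii),
\[
\nu_{\bd}(E_h)=\mu_d\big(\{\Lambda:\Lambda\cap C(\delta_n u)\ne\{0\}\}\big)+\mathrm{err}_h,
\]
where $\sum_{h\in F_n}|\mathrm{err}_h|\to 0$ because $e^{-\kappa\lfloor T\rfloor_{\bd}}\le|F_n|^{-\kappa\,v^{+}_{\bd}(F_n)/\log|F_n|}$ decays faster than any power of $|F_n|$; the persistent points $h(p,0)$, $p\ne 0$, present in every lattice of $\supp(h_{*}\nu_{\bd})$ lie outside $C(\delta_n u)$, again by the depth hypothesis, and so do not interfere. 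The quantity $\mu_d(\{\Lambda\cap C(\delta_n u)\ne\{0\}\})$ is now purely $\mu_d$-theoretic and, by the small-volume Rogers-formula analysis underlying Theorem \ref{Thm_mainmu} (the factor $\tfrac12$ accounting for the pairs $\pm v$, and $\tfrac{1}{\zeta(d)}$ for the reduction to primitive vectors), equals $\tfrac{1}{2\zeta(d)}\Vol_d(C(\delta_n u))+o(|F_n|^{-1})$, which by (iii) and the choice $\delta_n=|F_n|^{-1/a}(\log|F_n|)^{-b/a}$ is $m_o u^{a}/|F_n|+o(|F_n|^{-1})$. Summing over $F_n$ gives $\int N_n(\,\cdot\,;\delta_n u)\,d\nu_{\bd}\to m_o u^{a}$.

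For $k\ge 2$ the idea is to reduce each joint probability to a $\mu_d$-correlation by factoring out one group element. Substituting $\Lambda\mapsto h_1^{-1}.\Lambda$,
\[
\nu_{\bd}\Big(\bigcap_{i=1}^{k}E_i\Big)=\big((h_1)_{*}\nu_{\bd}\big)\Big(\bigcap_{i=1}^{k}\big\{\Lambda:(h_ih_1^{-1}).\Lambda\cap C(\delta_n u)\ne\{0\}\big\}\Big),
\]
and, since $\dist_d$ is right-invariant, the configuration $\{I\}\cup\{h_ih_1^{-1}:i\ge 2\}$ has all pairwise distances $\ge v^{+}_{\bd}(F_n)\gg\log|F_n|$. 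Arguing as in the case $k=1$ — smoothing the indicator of $\bigcap_i\{(h_ih_1^{-1}).\Lambda\cap C(\delta_n u)\ne\{0\}\}$ and invoking effective equidistribution of $(h_1)_{*}\nu_{\bd}$, whose depth is $\ge v^{+}_{\bd}(F_n)$ — one replaces $(h_1)_{*}\nu_{\bd}$ by $\mu_d$ with a total error over all $|F_n|^{k}$ tuples tending to $0$. One is then left with $\mu_d\big(\bigcap_{i=1}^{k}\{(h_ih_1^{-1}).\Lambda\cap C(\delta_n u)\ne\{0\}\}\big)$ for a spread-out configuration, and the correlation estimates underlying the proof of Theorem \ref{Thm_mainmu} (effective multiple mixing / higher Rogers moments for $\mu_d$) show that this factorizes as $\prod_{i}\mu_d(\{(h_ih_1^{-1}).\Lambda\cap C(\delta_n u)\ne\{0\}\})(1+o(1))$, which by $\SL_d(\bR)$-invariance of $\mu_d$ equals $\mu_d(\{\Lambda\cap C(\delta_n u)\ne\{0\}\})^{k}(1+o(1))=(m_o u^{a}/|F_n|)^{k}(1+o(1))$, uniformly over the tuple. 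Summing over the $|F_n|(|F_n|-1)\cdots(|F_n|-k+1)$ distinct tuples produces $(m_o u^{a})^{k}$, completing the moment computation.

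I expect the main obstacle to be the tension in the higher-moment step between two sources of loss: desmoothing the indicators of $\{g.\Lambda\cap C(t)\ne\{0\}\}$, which become very spiky as $t=\delta_n u\to 0$, costs a polynomial power of $|F_n|$ in Sobolev norm, while there are $|F_n|^{k}$ tuples to control; one therefore needs the equidistribution rate for expanding horospherical translates to be genuinely exponential in the depth $\lfloor\,\cdot\,\rfloor_{\bd}$, and the hypothesis $v^{+}_{\bd}(F_n)/\log|F_n|\to\infty$ — which simultaneously forces large depth and large pairwise separation, both super-logarithmically in $|F_n|$ — is exactly calibrated so that this exponential gain wins. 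The remaining points — negligibility of non-primitive and of doubly occupied lattices, removal of the rational points $(p,0)$, and the monotone sandwiching that licenses working with the non-continuous indicator $\mathbf 1[\Lambda\cap C(t)\ne\{0\}]$ — are routine once $\Vol_d(C(\delta_n u))\to 0$ and the depth hypothesis are in hand. Finally, the restriction $d\ge 3$ is inherited from the $\mu_d$-side input, cf.\ Remark \ref{Rmk_Thmmu}.
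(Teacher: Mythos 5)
Your overall strategy (method of factorial moments, Siegel--Rogers small-volume asymptotics for $\mu_d(\Omega_{C(\delta_n u)})$, and sandwiching indicators by smooth approximations) matches the paper, which runs the argument through the general Poisson criterion of Lemma \ref{LemmaPoisson}, the abstract shrinking-target criterion of Theorem \ref{Thm_An}, and the small-volume estimate of Lemma \ref{LemmaB}. Your $k=1$ computation is essentially the paper's. The genuine gap is in your $k\ge 2$ step.

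After substituting $\Lambda\mapsto h_1^{-1}.\Lambda$, you want to replace $(h_1)_*\nu_{\bd}$ by $\mu_d$ when integrated against the indicator of $\bigcap_{i}(h_ih_1^{-1})^{-1}\Omega_{C(\delta_n u)}$. To do that via single-variable equidistribution you must first smooth this indicator and sandwich it, which requires stability of each $(h_ih_1^{-1})^{-1}\Omega_{C(\delta_n u)}$ under perturbation by a fixed identity neighbourhood $W_{\eps_n}$. But
\[
W_{\eps_n}.\big((h_ih_1^{-1})^{-1}\Omega\big)=(h_ih_1^{-1})^{-1}\Big[(h_ih_1^{-1})W_{\eps_n}(h_ih_1^{-1})^{-1}\Big]\Omega,
\]
and since $h_ih_1^{-1}$ is diagonal with $\dist_d$ from the identity at least $v^{+}_{\bd}(F_n)$, the conjugated neighbourhood has operator diameter of order $e^{v^{+}_{\bd}(F_n)}\eps_n$. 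Under the hypothesis $v^{+}_{\bd}(F_n)/\log|F_n|\to\infty$ this diverges faster than any power of $|F_n|$, so no admissible $\eps_n\gg|F_n|^{-\sigma}$ keeps the sandwich tight, and the claimed "total error over all $|F_n|^{k}$ tuples tending to $0$" does not follow. Equivalently, if you instead smooth the \emph{product} $\prod_i\psi\circ(h_ih_1^{-1})$ with $\psi$ a smoothed hitting-set indicator and try to control its Sobolev norm, that norm picks up the adjoint action of the $h_ih_1^{-1}$ and again blows up exponentially in the spread. This is the fundamental reason one cannot deduce effective \emph{multiple} equidistribution of $\nu_{\bd}$ from single-variable equidistribution of its translates together with multiple mixing of $\mu_d$, and it is exactly what your "arguing as in the case $k=1$" elides.

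The paper sidesteps this by using, as a black box, the effective multiple equidistribution of $\nu_{\bd}$ to $\mu_d$ along $H^{+}_{\bd}$ (Theorem \ref{Thm_BG}, from \cite{BG2}). The decisive structural feature of $\cW$-equidistribution (Definition \ref{Def_Wequi}) is that the error bound is $\ll_r e^{-\delta_r w_r(F)}\,\cS_{q_r}(\varphi)^{r}$, where $\cS_{q_r}(\varphi)$ is the norm of the \emph{single} untranslated test function $\varphi$, not of $\prod_{h\in F}\varphi\circ h$. Establishing that statement is the real analytic content; it cannot be recovered by factoring out one group element. Your calibration remark (exponential gain in depth versus polynomial loss from desmoothing) is correct, but it applies to the right input, Theorem \ref{Thm_BG}, not to the reduction you propose.
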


\begin{remark}
We have the same issues when $d = d_1 + d_2 = 2$ as in Remark \ref{Rmk_Thmmu}.
\end{remark}

We shall show that Theorems \ref{Th_Dirichlet} and \ref{Th_Gal} are direct consequences of Theorem \ref{Thm_mainnu}.

\begin{proof}[Proof of Theorem \ref{Th_Dirichlet}]
Let $\pi_1:\bR^{d}\to \bR^{d_1}$ and $\pi_2:\bR^{d}\to \bR^{d_2}$
denote the projections on the first $d_1$ coordinates and the last $d_2$ coordinates respectively. Let 
$$
\eta(x) = \|\pi_1(x)\|_\infty,\quad \tau(x) = \|\pi_2(x)\|_\infty,\quad
C = \{\tau\le 1\}.
$$
We note that the pair $(\eta,C)$ is $(d_1,0,2^d)$-regular (see  Example \ref{Ex3}). Then for $T\in \mathscr{T}_{\bd}^{+}$,
$$
k_T(\alpha)=\widetilde \eta\big(h(T)\Lambda_{\alpha}\big),
$$
so that Theorem \ref{Th_Dirichlet} is a consequence of Theorem \ref{Thm_mainnu}.
\end{proof}

\begin{proof}[Proof of Theorem \ref{Th_Gal}]
Let 
$$
\eta(x) = \Pi(\pi_1(x)),\quad \tau(x) = \|x\|_\infty,\quad
C = \{\tau\le 1\},
$$
where $\Pi(y)=|y_1\cdots y_{d_1}|$ for $y\in\bR^{d_1}$.
The pair $(\eta,C)$ is $(1,d_1-1,2^d c_{d_1})$-regular,
where $c_{d_1}$ is the volume of the simplex $\{ u \in [0,1]^{d_1-1} \,  : \,  u_1 + \ldots + u_{d_1-1} \leq 1 \big\}$.
Indeed, this is a combination of Examples \ref{Ex25} and \ref{Ex3}. 
Then for $T\in \mathscr{T}_{\bd}^{+}$,
$$
g_T(\alpha)=\widetilde \eta\big(h(T)\Lambda_{\alpha}\big),
$$
and Theorem \ref{Th_Gal} is a consequence of Theorem \ref{Thm_mainnu}.
\end{proof}

\subsection{Connections to earlier works}

In Section \ref{Sec:ShrinkingTargets} and Section \ref{Sec:ProofThmBn},  we explain how the results stated above can be recast in the language of dynamical systems and shrinking targets (possibly with respect to a non-invariant measure).  There is a vast literature on Poisson asymptotics (and thus on extremal laws) for shrinking targets,  and we will not attempt to make a comprehensive summary.  The investigations in this paper could be said to originate from some surprising observations concerning Poisson approximation in the gap distribution of Laplacian eigenvalues by Sinai \cite{Sinai1, Sinai2} (see also the subsequent work by Hirata \cite{Hirata} for a more dynamical framework,  closer to the spirit of this paper).  In the setting of homogeneous spaces,
the distribution of extreme values of excursions for partially hyperbolic one-parameter subgroups was studied by Pollicott \cite{pol} and Kirsebom \cite{kir}.

Recently,  Dologopyat,  Fayad and Liu \cite{DFL} have conceptualized a lot of the early developments by emphasizing the role of quantitative multiple mixing ($\cW$-mixing in our paper).  Their setup and arguments in \cite[Section 3]{DFL} are close to the ones presented in Section \ref{Sec:ShrinkingTargets} below.  In particular,  their notion of \emph{simple admissible targets} \cite[Definition 3.2]{DFL} captures a similar form of volume growth stability under perturbations as our notion of 
$(a,b,c)$-regularity,  and their \cite[Proposition 3.9]{DFL} is close to our Theorem \ref{Thm_An} (specialized to $\bR$-flows).  

\subsection{An overview of the paper}

In Section \ref{Sec:VolumeRegularity} we state a common generalization (Theorem \ref{Thm_Bn}) of Theorem \ref{Thm_mainmu} and Theorem \ref{Thm_mainnu},  and
deduce these theorems from it,  highlighting the role of volume regularity.  In Section \ref{Sec:ShrinkingTargets} we discuss Poisson asymptotics for general shrinking targets.  We introduce the notion of $\cW$-equidistribution and show that $\cW$-equidistribution ensures that we always have Poisson asymptotics under some weak assumptions on the shrinking targets (Theorem \ref{Thm_An}).  In Section \ref{Sec:ProofThmBn},  we deduce Theorem \ref{Thm_Bn} from Theorem \ref{Thm_An}.  To do this,  we need small volume asymptotics for hitting sets in the space of lattices.  Such asymptotics are proved in Section \ref{Sec:SiegelTransforms} (Lemma \ref{LemmaB}),  using classical formulas of Siegel and Rogers.

\section{The role of volume regularity}
\label{Sec:VolumeRegularity}
In this section we extend Theorem \ref{Thm_mainmu} and Theorem \ref{Thm_mainnu} to a larger class of shrinking targets (see Theorem \ref{Thm_Bn} below).  In Subsection
\ref{subsec:whyBn} we explain how $(a,b,c)$-regularity of the pair $(\eta,C)$ implies that the sequence $(C(u\delta_n))$,  which appear in both Theorem  \ref{Thm_mainmu}
and Theorem \ref{Thm_mainnu},  fit this framework,  and thereby proving these theorems (assuming Theorem \ref{Thm_Bn}).

\subsection{Definitions and notation}

We denote by $\|\cdot\|_\infty$ the $\ell^\infty$-norm on $\bR^d$,  and define
\[
D_R := \big\{ x \in \bR^d \,  : \,  \|x\|_\infty \leq R \big\},  \quad \textrm{for $R \geq 0$}. 
\]
If $t \in \bR$ and $B \subset \bR^d$,  we write
\[
t \cdot B = \{ tx \,  : \,  x \in B \big\}.  
\]
We say that a subset $B \subset \bR^d$ is \emph{balanced} if $tB \subset B$ for all $|t| \leq 1$.   Given $\eps > 0$,  let
\begin{equation}
\label{Def_Weps}
W_\eps := \big\{ g \in \SL_d(\bR) \,  : \,  \| \, g - \id \|_{\textrm{op}} < \eps \big\},
\end{equation}
where $\|\cdot\|_{\textrm{op}}$ denotes the operator norm on $\SL_d(\bR)$ with respect to $\|\cdot\|_\infty$.  If $B$ is a subset of $\bR^d$,  we write
\[
W_\eps.B = \{ g.x \,  : \,  g \in W_\eps,  \enskip x \in B \big\}.
\]

\subsection{Poisson approximation}

Let $(F_n)$ be a sequence of finite subsets of $H_d$ such that $|F_n| \ra \infty$ and let
$(B_n)$ be a sequence of bounded and balanced sets in $\bR^d$.  We define
\begin{equation}
\label{Nn_Bn}
N_n(\Lambda) := \#\big\{ h \in F_n \,  : \,  h.\Lambda \cap B_n \neq \{0\} \big\},  
\quad \textrm{for $\Lambda \in X_d$}. 
\end{equation}
In the next subsection we show how one can prove Theorem \ref{Thm_mainmu} and
Theorem \ref{Thm_mainnu} from the following theorem.  

\begin{theorem}
\label{Thm_Bn}
Let $d \geq 3$ and suppose that there exist
\begin{itemize}
\item[$(i)$] a sequence $(\eps_n)$ of positive integers such that $\eps_n \gg |F_n|^{-\sigma}$ for some $\sigma > 0$.  \vspace{0.1cm}
\item[$(ii)$] two sequences $(B_n^{-})$ and $(B_n^{+})$ of bounded and balanced 
subsets of $\bR^d$ such that 
\[
W_{\eps_n}.B_n^{-} \subset B_n \qand W_{\eps_n}.B_n \subset B_n^{+}
\]
and the limits
\[
\xi := \lim_{n \ra \infty} |F_n| \cdot \Vol_d(B_n^{+}) =  \lim_{n \ra \infty} |F_n| \cdot \Vol_d(B_n^{-})
\]
exist.
\end{itemize}
Then,
\vspace{0.1cm}
\begin{itemize}
\item[\textsc{(i)}] If $v_d(F_n)/\log |F_n| \ra \infty$,  then
\[
N_n \xLongrightarrow[\mu_d]{} \Poi\big(\xi/2\zeta(d)\big).
\]
\item[\textsc{(ii)}] If $F_n \subset H_{\bd}^{+}$ for all $n$ and $v_{\bd}^{+}(F_n)/\log |F_n| \ra \infty$,  then
\[
N_n \xLongrightarrow[\nu_{\bd}]{} \Poi\big(\xi/2\zeta(d)\big).
\]
\end{itemize}
\end{theorem}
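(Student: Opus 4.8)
\emph{Strategy and Step 1 (passage to hitting sets).} The plan is to deduce Theorem~\ref{Thm_Bn} from the abstract Poisson limit theorem for shrinking targets of Section~\ref{Sec:ShrinkingTargets} (Theorem~\ref{Thm_An}), whose hypotheses are: that the ambient system be $\cW$-equidistributed; that the targets be squeezed between two families stable under perturbations of size a negative power of the sample size; and that the rescaled target masses converge. Put $A_n := \{\Lambda \in X_d : \Lambda \cap B_n \neq \{0\}\}$, and define $A_n^{\pm}$ from $B_n^{\pm}$ the same way, so that $N_n(\Lambda) = \#\{h \in F_n : h.\Lambda \in A_n\}$. If $g \in W_{\eps_n}$ and $0 \neq v \in \Lambda \cap B_n^{-}$, then $g.v \in g.\Lambda$ and $g.v \in W_{\eps_n}.B_n^{-} \subset B_n$, so $g.\Lambda \in A_n$; hence $W_{\eps_n}.A_n^{-} \subset A_n$, and symmetrically $W_{\eps_n}.A_n \subset A_n^{+}$. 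Thus $(A_n^{-}, A_n, A_n^{+})$ is an admissible sandwich of shrinking targets with perturbation scale $\eps_n \gg |F_n|^{-\sigma}$, which is exactly the input Theorem~\ref{Thm_An} requires. (Balancedness of $B_n$ is used here: it guarantees that $\Lambda$ meets $B_n$ nontrivially precisely when $\Lambda$ contains a primitive vector, up to sign, in $B_n$.)

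\emph{Step 2 (mass of the targets).} One needs $|F_n| \cdot \theta(A_n^{\pm}) \to \xi/2\zeta(d)$ for $\theta$ either $\mu_d$ or $\nu_{\bd}$. Since $|F_n| \cdot \Vol_d(B_n^{\pm}) \to \xi$, it suffices that $\theta(A_n^{\pm}) = \tfrac{1}{2\zeta(d)} \Vol_d(B_n^{\pm})(1 + o(1))$ as the volumes shrink, which is precisely Lemma~\ref{LemmaB}: Siegel's mean value formula gives $\Vol_d(B)/\zeta(d)$ for the expected number of primitive lattice vectors of a $\mu_d$-random lattice in $B$, halved for the $\pm$-ambiguity, while Rogers' second-moment formula --- available for $d \geq 3$, which is where that hypothesis enters --- bounds by $O(\Vol_d(B)^2)$ the lower-order event that $B$ contains two linearly independent primitive vectors. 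For $\theta = \nu_{\bd}$ one invokes additionally that each $h \in F_n$ has $\lfloor h \rfloor_{\bd}$ large, so that $h.\nu_{\bd}$ is close to $\mu_d$ at the relevant scale; this is subsumed in $\cW$-equidistribution and needs no separate treatment.

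\emph{Step 3 ($\cW$-equidistribution and conclusion).} It remains to feed Theorem~\ref{Thm_An} the $\cW$-equidistribution of $(X_d, \mu_d)$ under $H_d$ and of $(X_d, \nu_{\bd})$ under $H_{\bd}^{+}$ --- quantitative multiple mixing for the diagonal action in the first case, quantitative multiple equidistribution of translates of the horospherical family $\{\Lambda_\alpha\}$ in the second, both consequences of effective equidistribution on $X_d$. In the factorial-moment computation underlying Theorem~\ref{Thm_An}, the $k$-th factorial moment $\sum_{h_1,\dots,h_k \ \mathrm{distinct}} \theta\big(\bigcap_i h_i^{-1}.A_n\big)$ equals $\big(|F_n|\,\theta(A_n)\big)^k + o(1) \to (\xi/2\zeta(d))^k$; here the spread conditions $v_d(F_n)/\log|F_n| \to \infty$ (resp.\ $v_{\bd}^{+}(F_n)/\log|F_n| \to \infty$) are decisive, since each correlation error is bounded by a fixed power of a Sobolev norm of a scale-$\eps_n$ smoothing of $\mathbf{1}_{A_n^{\pm}}$ (hence by $|F_n|^{O(1)}$) times $e^{-\kappa\, v_d(F_n)}$, which overwhelms the at most $|F_n|^k$ summands. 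As these are the factorial moments of $\Poi(\xi/2\zeta(d))$, Theorem~\ref{Thm_An} gives $N_n \Rightarrow \Poi(\xi/2\zeta(d))$ with respect to $\mu_d$ in case~\textsc{(i)} and with respect to $\nu_{\bd}$ in case~\textsc{(ii)}.

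\emph{Where the difficulty lies.} The genuine work is carried by Theorem~\ref{Thm_An} and Lemma~\ref{LemmaB}: because the targets shrink, no fixed mixing rate suffices, and one must marry a mixing estimate quantitative at the moving scale $\eps_n$ with the knowledge that the Rogers correction of Step~2 is genuinely subleading, so that the multi-point correlation functions actually factorise rather than merely stay comparable. At the level of the present deduction the only thing to check is that the polynomial lower bound on $\eps_n$ and the sandwich between $B_n^{-}$ and $B_n^{+}$ place us within the hypotheses of Theorem~\ref{Thm_An}, and that Lemma~\ref{LemmaB} applies verbatim to the balanced sets $B_n^{\pm}$.
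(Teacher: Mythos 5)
Your proposal is correct and follows essentially the same route as the paper: identify $A_n = \Omega_{B_n}$ and $A_n^{\pm} = \Omega_{B_n^{\pm}}$, verify the sandwich inclusions for $W_{\eps_n}$, plug $\eps_n \gg |F_n|^{-\sigma}$ and Lemma~\ref{LemmaB} into Theorem~\ref{Thm_An}, and supply $\cW$-mixing (Theorem~\ref{Thm_BEG}) or $\cW$-equidistribution (Theorem~\ref{Thm_BG}) as the equidistribution input. The one phrasing to tighten is in Step~2: Theorem~\ref{Thm_An} asks only for convergence of $|F_n|\,\mu_d(A_n^{\pm})$, not of $|F_n|\,\nu_{\bd}(A_n^{\pm})$ --- the passage from $\mu_d$-mass to $\nu_{\bd}$-distribution is done entirely by the $\cW$-equidistribution hypothesis, as you note a sentence later, so there is no need to estimate $\nu_{\bd}(A_n^{\pm})$ at all.
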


\subsection{Proofs of Theorem \ref{Thm_mainmu} and Theorem \ref{Thm_mainnu} assuming Theorem \ref{Thm_Bn}}
\label{subsec:whyBn} 

Let $a,c > 0$ and $b \geq 0$,  and suppose that $(\eta,C)$ is a $(a,b,c)$-regular pair. 
In particular, 
\[
C(t) = \big\{ x \in \bR^d \,  : \,  \eta(x) < t,  \enskip \tau(x) \leq 1 \big\},  \quad \textrm{for all $t \geq 0$},
\] 
for some semi-norm $\tau$ on $\bR^d$,  and $C(t)$ is a balanced and bounded subset of $\bR^d$ for every $t \geq 0$.  \\

Let $(F_n)$ be a sequence of finite subsets of
$H_d$ and set $\delta_n = |F_n|^{-1/a}(\log |F_n|)^{-b/a}$.  Fix $u > 0$.  We want to apply Theorem \ref{Thm_Bn} to the sets
\begin{equation}
\label{chooseBn}
B_n = C(\delta_n u) \qand B_n^{\pm} = t_n^{\pm} \cdot C(\delta_n(u \pm \beta_n)),
\end{equation}
for some suitable sequences $(t^\pm_n)$ and $(\beta_n)$ of positive real numbers.  More precisely,  we wish to choose four sequences $(t_n^{\pm}),  (\eps_n)$ and 
$(\beta_n)$ such that 
\[
\lim_{n \ra \infty} t_n^{\pm} = 1 \qand \eps_n \gg |F_n|^{-\sigma},  \quad \textrm{for some $\sigma > 0$}, 
\] 
and (for all sufficiently large $n$,  depending on $u$),  
\vspace{0.1cm}
\begin{itemize}
\item[(1)] $W_{\eps_n}.(t_n^{-} \cdot C(\delta_n(u - \beta_n))) \subset C(\delta_n u)$.  \vspace{0.2cm}
\item[(2)] $W_{\eps_n}.C(\delta_n u) \subset t_n^{+} \cdot C(\delta_n (u+\beta_n))$. \vspace{0.2cm}
\item[(3)] $\lim_n |F_n| \Vol_d\big(C(\delta_n(u \pm \beta_n)\big) = c \cdot u^a a^b$.
\end{itemize}
\vspace{0.2cm}
If we can do this,  then Theorem \ref{Thm_Bn},  applied with $\xi = c \cdot u^a a^b$,  implies:
\vspace{0.1cm}
\begin{itemize}
\item (Theorem \ref{Thm_mainmu}) If $v_d(F_n)/\log |F_n| \ra \infty$,  then
\[
N_n \xLongrightarrow[\mu_d]{} \Poi\big(c u^a a^b/2\zeta(d)\big),  \quad \textrm{for all $u > 0$}.
\]
\item (Theorem \ref{Thm_mainnu}) If $F_n \subset H_{\bd}^{+}$ for all $n$ and $v^{+}_{\bd}(F_n)/\log |F_n| \ra \infty$,  then
\[
N_n \xLongrightarrow[\nu_{\bd}]{} \Poi\big(c u^a a^b/2\zeta(d)\big),  \quad \textrm{for all $u > 0$}.
\]
\end{itemize}
Let us now address the choices of the four sequences $(t_n^{\pm}),  (\eps_n)$ and 
$(\beta_n)$.  The following lemma will be useful.
\begin{lemma}
\label{LemmaM}
There exists a constant $M$ such that
\[
W_\eps.C(t) \subseteq (1 + M\eps) \cdot C(t+M\eps),  
\]
for all $\eps, t \in [0,1]$.
\end{lemma}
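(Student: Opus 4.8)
The plan is to exploit the structure $C(t) = \{x : \eta(x) < t,\ \tau(x) \le 1\}$ together with the $\gamma$-homogeneity of $\eta$, and to handle a perturbed point $g.x$ of some $x \in C(t)$ in two stages: first move from $x$ to $g.x$ (a genuine $\ell^\infty$-perturbation, controlled by the local Lipschitz constant of $\eta$ and the continuity of the semi-norm $\tau$), then rescale by the factor $(1+M\eps)^{-1} \le 1$, which by homogeneity and non-negativity of $\eta$ can only \emph{decrease} both $\eta$ and $\tau$ and thus costs nothing.

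First I would fix a scale. By condition (ii) of $(a,b,c)$-regularity the set $C(1)$ is bounded, so we may pick $R \ge 1$ with $C(1) \subseteq D_R$; since $C(t) \subseteq C(1)$ for $t \le 1$, every $x$ we care about lies in $D_R$. Because $\eta$ is locally Lipschitz and $D_{2R}$ is compact, $\eta$ is Lipschitz on $D_{2R}$ with some constant $L_\eta < \infty$; because $\tau$ is a semi-norm on the finite-dimensional space $\bR^d$, there is $L_\tau$ with $\tau(z) \le L_\tau \|z\|_\infty$ for all $z$. I would then set $M := R(L_\eta + L_\tau) + 1$.

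Now take $\eps, t \in [0,1]$, $g \in W_\eps$ and $x \in C(t)$, and write $y := g.x$, $s := (1+M\eps)^{-1} \in (0,1]$. Then $\|y - x\|_\infty = \|(g-\id).x\|_\infty \le \|g-\id\|_{\textrm{op}}\,\|x\|_\infty < \eps R \le R$, so $x, y, sx, sy$ all lie in $D_{2R}$. For the norm constraint, $\tau(y) \le \tau(x) + \tau(y-x) \le 1 + L_\tau \eps R$, hence $\tau(sy) = s\,\tau(y) \le (1 + L_\tau \eps R)/(1 + M\eps) \le 1$ by the choice of $M$. For $\eta$, using the Lipschitz bound on $D_{2R}$ and then $\gamma$-homogeneity,
\[
\eta(sy) \le \eta(sx) + L_\eta\|sy - sx\|_\infty = s^{\gamma}\eta(x) + s\,L_\eta\|y - x\|_\infty \le \eta(x) + L_\eta \eps R < t + M\eps,
\]
where $s^{\gamma} \le 1$ since $s \le 1$ and $\gamma \ge 0$, and $\eta(x) \ge 0$. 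Thus $sy = (1+M\eps)^{-1} g.x \in C(t+M\eps)$, i.e. $g.x \in (1+M\eps)\cdot C(t+M\eps)$, which is the claim.

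\textbf{Main obstacle.} There is no deep difficulty here; the argument is bookkeeping. The one point that genuinely requires care is that the rescaled point $(1+M\eps)^{-1}g.x$ must remain inside a \emph{fixed} compact set on which $\eta$ has a uniform Lipschitz constant — this is the reason for first enclosing everything in $D_{2R}$ — and for correctly tracking the sign of the homogeneity term so that the rescaling by a factor $\le 1$ works in our favour rather than against us.
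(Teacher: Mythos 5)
Your proof is correct and follows essentially the same route as the paper's: both enclose $C(1)$ in a fixed ball $D_R$, use a uniform Lipschitz bound for $\eta$ (and the automatic Lipschitz property of the semi-norm $\tau$) on $D_{2R}$, and then absorb the resulting enlargement into a rescaling by $(1+M\eps)$ via $\gamma$-homogeneity of $\eta$ and $1$-homogeneity of $\tau$. The only difference is cosmetic bookkeeping: you rescale first and verify the two defining inequalities of $C(t+M\eps)$ directly for $(1+M\eps)^{-1}g.x$, whereas the paper first shows $W_\eps.C(t)\subseteq\{\eta<t+M\eps,\ \tau\le 1+M\eps\}$ and then identifies this superset as $(1+M\eps)\cdot C\bigl((t+M\eps)/(1+M\eps)^\gamma\bigr)\subseteq(1+M\eps)\cdot C(t+M\eps)$.
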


\begin{proof}
Fix $\eps,  t \in [0,1]$.  Note that
\begin{align*}
W_\eps.C(t) 
&\subseteq 
\big\{ x \in \bR^d \,  : \,  \exists \,  y \in C(t) \enskip \textrm{such that} \enskip \|x-y\|_\infty < \eps \|y\|_\infty \big\} \\[0.1cm]
&\subseteq 
\big\{ x \in \bR^d \,  : \,  \exists \,  y \enskip \textrm{such that} \enskip \|x-y\|_\infty < \eps \|y\|_\infty,   \enskip \eta(y) < t,  \enskip \tau(y) \leq 1 \big\}.
\end{align*}
Since $C(1)$ is a bounded subset of $\bR^d$,  there exists a constant $M_o$ such that
$C(1) \subset D_{M_o}$.  Hence,  for every $x \in W_\eps.C(t)$,  we can find $y \in D_{M_o}$ such that
\[
\|x\| \leq (1+\eps)\|y\|_\infty,  \enskip \eta(x) < t + |\eta(x) - \eta(y)| \qand 
\tau(x) \leq 1 + |\tau(x) - \tau(y)|.
\]
Since both $\eta$ and $\tau$ are locally Lipschitz and $x,y \in D_{2M_o}$,  we can find a constant $L$ such that
\[
|\eta(x) - \eta(y)| \leq L\|x-y\|_\infty \leq LM_o \, \eps \qand
|\tau(x) - \tau(y)| \leq L\|x-y\|_\infty \leq LM_o \,  \eps
\]
We conclude that
\[
W_\eps.C(t) \subseteq \big\{ x \in \bR^d \,  : \,  \eta(x) < t + LM_o \, \eps,  \enskip \tau(x) \leq 1 + LM_o \, \eps \big\}.
\]
Set $M := L M_o$.  Since $\eta$ is $\gamma$-homogeneous for some $\gamma \geq 0$ and $\tau$ is $1$-homogeneous,  we thus see that
\[
W_\eps. C(t) \subset (1+M\eps) \cdot C\Big( \frac{t+M\eps}{(1+M\eps)^{\gamma}}\Big) 
\subset (1 + M\eps) \cdot C(t + M\eps),
\]
which finishes the proof. 
\end{proof}

Let $(s_n)$ be a sequence of positive real numbers converging to zero (to be specified later),  and set
\[
\eps_n = \delta_n s_n \qand t_n^{-} = \frac{1}{1 + M\eps_n} \qand t_n^{+} = 1 + M\eps_n \qand \beta_n = Ms_n,
\]
where $M$ is as in the lemma above.  Note that $t_n^{\pm} \ra 1$ as $n \ra \infty$. Since $(s_n)$ and $(\delta_n)$ converge to zero,
we may without loss of generality assume that $n$ is large enough so that $\beta_n < u$ and $\delta_n u \leq 1$,  in which case we can apply Lemma \ref{LemmaM} with 
\[
t = \delta_n u \qand \eps = \eps_n = \delta_n s_n,
\]  
and readily deduce the inclusions (1) and (2) above.  Concerning the limit in (3),  we argue as follows.  Firstly, 
since $\delta_n(u \pm \beta_n) \ra 0$ and $(\eta,C)$ is $(a,b,c)$-regular,  we have
\[
c = \lim_{n \ra \infty} 
\frac{\Vol_d\big(C(\delta_n(u \pm \beta_n))\big)}{\delta_n^a(u \pm \beta_n)^a (-\log \delta_n(u \pm \beta_n))^b}.
\]
Hence,  since $\delta_n = |F_n|^{-1/a}(\log |F_n|)^{-b/a}$,  
\begin{align*}
\lim_{n \ra \infty} |F_n| \Vol_d\big(C(\delta_n(u \pm \beta_n))\big)
&= c \cdot \lim_{n \ra \infty} |F_n| \delta_n^a(u \pm \beta_n)^a (-\log \delta_n(u \pm \beta_n))^b \\[0.2cm]
&= c \cdot \lim_{n \ra \infty} \frac{(u \pm \beta_n)^a (a \log |F_n| + r^{\pm}_n(u))^b}{(\log |F_n|)^b} \\[0.2cm]
&= c \cdot  u^{a} a^b,
\end{align*}
where $r^{\pm}_n(u) := \frac{b}{a} \log \log |F_n| + \log(u \pm \beta_n)$.  This proves (3).  It remains to
choose the sequence $(s_n)$ so that $\eps_n \gg |F_n|^{-\sigma}$ for some $\sigma > 0$.  Since $\delta_n = |F_n|^{-1/a} (\log |F_n|)^{-b/a}$,  we can for instance take 
$s_n = (\log |F_n|)^{b/a}/|F_n|^{1/a}$,  in which case $\sigma = 2/a$.

\section{Poisson approximation for shrinking targets}
\label{Sec:ShrinkingTargets}

In this section we show how equidistribution (or mixing) of all orders can be used to establish Poisson asymptotics in some situations which involve shrinking targets.  We begin by proving a very general criterion for Poisson approximation (Lemma \ref{LemmaPoisson}).  Then we introduce the notion of $\cW$-equidistribution which is a simultaneous generalization of both (quantitative) equidistribution (of all orders) and (quantitative) mixing (of all orders).  Our main result in this section is Theorem \ref{Thm_An},  which provides criteria for when the hitting times to shrinking targets in a $\cW$-equidistributing setting are asymptotically Poissonian. 

\subsection{A general criterion for Poisson approximation}

Let $(Z,\nu)$ be a probability space and let $H$ be an infinite set.  Let $(F_n)$ be a sequence of finite subsets of $H$ such that $|F_n| \ra \infty$.  Suppose that for every 
$n$ and $h \in F_n$,  we are given a measurable subset $A_{n,h} \subset Z$.  Define 
\begin{equation}
\label{Def_Nn_General}
N_n(z) := \#\big\{ h \in F_n \,  : \,  z \in A_{n,h} \big\},  \quad \textrm{for $z \in Z$}. 
\end{equation}
We will prove that the sequence $(N_n)$ is $\nu$-asymptotically Poissonian with mean $m$,  provided that the sets $A_{n,h}$,  for $h \in F_n$,  are "almost independent" for large $n$.  To make this more precise,  we define
\begin{equation}
\Delta_{n,r}(m) :=  \max\Big\{ \big| \,  |F_n|^r \cdot \nu\Big( \bigcap_{h \in F'} A_{n,h} \Big) - m^r \big| \,  : \,  F' \subset F_n, \enskip |F'| = r \Big\},
\end{equation}
for positive integers $r,n$ and a real number $m \geq 0$.  If $|F_n| < r$,   we set $\Delta_{n,r}(m) = 0$ for all $m$.  Note that if the sets $A_{n,h}$ for $h \in F_n$ are
$\nu$-independent,  and if $\nu(A_{n,h}) = m$ for all $h$,  then $\Delta_{n,r}(m) = 0$.  \\

The following lemma will be useful in the next subsection. 

\begin{lemma}
\label{LemmaPoisson}
Suppose that there exists $m \geq 0$ such that
\[
\lim_{n \ra \infty} \Delta_{n,r}(m) = 0,  \quad \textrm{for all $r \geq 1$}.
\]
Then 
$$
N_n \xLongrightarrow[\nu]{} \Poi(m)\quad\hbox{as $n \ra \infty$. }
$$
\end{lemma}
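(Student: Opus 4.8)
The plan is to use the method of moments, in the form that is most natural for Poisson limits, namely \emph{factorial moments}. Recall that if $X \sim \Poi(m)$ then the $r$-th factorial moment is $\bE[X(X-1)\cdots(X-r+1)] = m^r$, and that a sequence of $\bN_0$-valued random variables with all factorial moments converging to those of $\Poi(m)$ converges in distribution to $\Poi(m)$ (this is a standard fact; it follows for instance from the fact that the Poisson distribution is determined by its moments, together with the expansion of ordinary moments in terms of factorial moments and the bounded-moments criterion). So it suffices to show that for each fixed $r \geq 1$,
\[
\bE_\nu\big[ N_n(N_n-1)\cdots(N_n-r+1)\big] \longrightarrow m^r, \quad \textrm{as } n \ra \infty.
\]

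The key step is the combinatorial identity expressing the $r$-th descending factorial of $N_n$ as a sum over ordered $r$-tuples of distinct elements of $F_n$. Concretely, writing $N_n(z) = \sum_{h \in F_n} \mathbf{1}_{A_{n,h}}(z)$, one has the pointwise identity
\[
N_n(z)(N_n(z)-1)\cdots(N_n(z)-r+1) = \sum_{(h_1,\ldots,h_r)} \mathbf{1}_{A_{n,h_1}}(z)\cdots \mathbf{1}_{A_{n,h_r}}(z),
\]
where the sum runs over all ordered $r$-tuples of \emph{pairwise distinct} $h_1,\ldots,h_r \in F_n$. Integrating against $\nu$ and using that there are $|F_n|(|F_n|-1)\cdots(|F_n|-r+1)$ such tuples, each corresponding to an $r$-element subset $F' \subset F_n$ counted $r!$ times, gives
\[
\bE_\nu\big[ N_n(N_n-1)\cdots(N_n-r+1)\big] = \sum_{\substack{F' \subset F_n \\ |F'| = r}} r! \cdot \nu\Big( \bigcap_{h \in F'} A_{n,h}\Big).
\]
Now I compare this with $m^r$. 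The number of $r$-element subsets of $F_n$ is $\binom{|F_n|}{r}$, so $\sum_{|F'|=r} r! \, m^r / |F_n|^r = r!\binom{|F_n|}{r}|F_n|^{-r} \to 1$ as $n \to \infty$ (for fixed $r$). On the other hand, by the definition of $\Delta_{n,r}(m)$, for every such $F'$ we have $\big| |F_n|^r \nu(\bigcap_{h\in F'} A_{n,h}) - m^r \big| \leq \Delta_{n,r}(m)$, hence
\[
\Big| \bE_\nu\big[ N_n(N_n-1)\cdots(N_n-r+1)\big] - r!\binom{|F_n|}{r}\frac{m^r}{|F_n|^r} \Big| \leq r!\binom{|F_n|}{r}\frac{\Delta_{n,r}(m)}{|F_n|^r}.
\]
The right-hand side is bounded by $\Delta_{n,r}(m)$ (since $r!\binom{|F_n|}{r} \leq |F_n|^r$), which tends to $0$ by hypothesis, while $r!\binom{|F_n|}{r}|F_n|^{-r} m^r \to m^r$. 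Therefore the $r$-th factorial moment of $N_n$ converges to $m^r$, and since this holds for every $r \geq 1$, the method of moments yields $N_n \xLongrightarrow[\nu]{} \Poi(m)$.

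The main obstacle, such as it is, is purely a matter of citing or recording the correct moment-convergence criterion: one must be slightly careful that convergence of \emph{factorial} moments to those of a Poisson law does imply convergence in distribution. This is true because the Poisson distribution is uniquely determined by its moments (its moment generating function $\exp(m(e^s-1))$ is finite everywhere, so Carleman's condition holds), and convergence of all factorial moments is equivalent to convergence of all ordinary moments via the Stirling-number change of basis $x^r = \sum_{j=0}^r \stirling{r}{j} x(x-1)\cdots(x-j+1)$; one then invokes the classical theorem that if the moments of $N_n$ converge to the moments of a distribution determined by its moments, then $N_n$ converges to that distribution. No genuine analytic difficulty arises; the heart of the argument is the elementary factorial-moment identity above together with the quantitative almost-independence encoded in $\Delta_{n,r}(m) \to 0$.
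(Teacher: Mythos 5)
Your proof is correct and follows essentially the same route as the paper: both reduce to convergence of the $r$-th binomial/factorial moments via the method of moments, expand $\binom{N_n}{r}$ (equivalently $N_n(N_n-1)\cdots(N_n-r+1)/r!$) as a sum of $\nu\big(\bigcap_{h\in F'}A_{n,h}\big)$ over $r$-element subsets $F'\subset F_n$, and bound the deviation from $m^r/r!$ by $\Delta_{n,r}(m)$. The only cosmetic difference is that you spell out the justification of the factorial-moment criterion (Carleman, Stirling-number change of basis) where the paper simply cites Billingsley.
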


\begin{proof}
By the classical method of moments (see e.g.  \cite[Theorem 30.1]{Billingsley}),  it
suffices to show that
\begin{equation}
\label{suffice}
\lim_{n \ra \infty} \int_Z \binom{N_n}{r} \,  d\nu = \frac{m^r}{r!},  \quad \textrm{for all $r \geq 1$}. 
\end{equation}
By \cite[Section 7.3,  Eq.  (3.4)]{Ross},  
\[
\int_Z \binom{N_n}{r} \,  d\nu  = \sum_{F'} \nu\Big( \bigcap_{h \in F'} A_{n,h} \Big),   \quad \textrm{for all $r \geq 1$},
\]
where the sum is taken over all finite subsets $F' \subset F_n$ with $|F'| = r$.  Hence,  
\begin{align*}
\int_Z \binom{N_n}{r} \,  d\nu  
&= 
\sum_{F'} \Big( \nu\Big( \bigcap_{h \in F'} A_{n,h} \Big) - \frac{m^r}{|F_n|^r} \Big) + 
\binom{|F_n|}{r} \frac{m^r}{|F_n|^r} \\[0.2cm]
&= \frac{1}{|F_n|^r} \cdot \sum_{F'} \Big( |F_n|^r \nu\Big( \bigcap_{h \in F'} A_{n,h} \Big) - m^r \Big) + \binom{|F_n|}{r} \frac{m^r}{|F_n|^r}.
\end{align*}
Since $|F_n| \ra \infty$ as $n \ra \infty$ and $\lim_{n \ra \infty} \Delta_{n,r}(m) = 0$,  we conclude that
\begin{align*}
\varlimsup_{n \ra \infty} \Big| \int_Z \binom{N_n}{r} \,  d\nu - \frac{m^r}{r!} \Big| 
&\leq \varlimsup_{n \ra \infty}\Big( \,  \frac{1}{|F_n|^r} \binom{|F_n|}{r} \,  \Delta_{n,r}(t)
+ \Big(\frac{1}{|F_n|^r} \binom{|F_n|}{r} -  \frac{1}{r!}\Big) \cdot m^r \,  \Big) \\[0.2cm]
&= \varlimsup_{n \ra \infty} \frac{\Delta_{n,r}(m)}{r!} = 0.
\end{align*}
Since $r$ is arbitrary,  this proves \eqref{suffice},  and we are done.
\end{proof}

\subsection{Quantitative multiple equidistribution}

The main aim of this subsection is to define the notion of $\cW$-equidistribution for actions of real Lie groups on locally compact spaces.   To be able to do this,  we first need to discuss $C^q$-norms and good pairs.  

\subsubsection{$C^q$-norms}

Let $G$ be a real Lie group with Lie algebra $\gog$.  Given $Y \in \gog$ we denote by
$D_Y$ the differential operator defined by
\[
(D_Y \rho)(g) = \frac{d}{dt} \rho(e^{tY}g) \mid_{t = 0},  \quad \varphi \in C_c^\infty(G).
\]
Let $\{Y_1,\ldots,Y_{s}\}$ be a basis for $\gog$,  where $s = \dim \gog$.  Given 
$\alpha \in \bN^{s}$,  we define the differential operator $D_\alpha$ by
\[
D_\alpha := D_{Y_1}^{\alpha_1} \cdots D_{Y_{s}}^{\alpha_{s}}.
\]
We refer to $|\alpha| := \alpha_1 + \ldots + \alpha_s$ as the \emph{order} of $D_\alpha$,  and define,  for $q \geq 1$,  
\begin{equation}
\label{def_Cq}
\|\rho\|_{C^q} := \max\{ \|D_\alpha \rho\|_\infty \,  : \,  |\alpha| \leq q \big\},   \quad \textrm{for $\rho \in C_c^\infty(G)$}.
\end{equation}

\subsubsection{Good pairs and $\cW$-equidistribution}

Let $Z$ be a locally compact and second countable metrizable space,  endowed with a jointly continuous action of $G$.  We assume that there exists a $G$-invariant Borel probability measure $\mu$ on $Z$.  Let $\cA$ be a $G$-invariant sub-algebra of 
$\bC \cdot 1 + C_c(Z)$,  and let $\cS = (\cS_q)$ be a family of semi-norms on $\cA$. 

\begin{definition}[Good pairs]
\label{Def_good}
We say that the pair $(\cA,\cS)$ is \emph{good} if for every Borel set $A \subset Z$ such that either $A$ or $A^c$ is pre-compact,  and for every $\rho \in C_c^\infty(G)$,  we have
\[
\rho * \chi_A \in \cA \qand \cS_q(\rho * \chi_A) \ll_q \|\rho\|_{C^q},
\]
for all $q \geq 1$,  where the implicit constants only depend on $q$,  and not on the set $A$. 
\end{definition}

Let $H$ be a closed subgroup of $G$.  For every $r \geq 1$,  let $w_r$ be a non-negative function on the space of $r$-element subsets of $H$,  and set $\cW = (w_r)$.

\begin{definition}[$\cW$-equidistribution and $\cW$-mixing]
\label{Def_Wequi}
Let $\nu$ be a Borel probability measure on $Z$.  We say that \emph{$\nu$ $\cW$-equidistributes to $\mu$ along $H$  with respect to the pair $(\cA,\cS)$} if for every $r \geq 1$,  there exist $\delta_r > 0$ and $q_r \geq 1$ such that for every $\varphi \in \cA$ and $F \subset H$ with $|F| = r$,  we have
\[
\Big| \int_Z \prod_{h \in F} \varphi \circ h \,  d\nu - \Big(\int_Z \varphi \,  d\mu \Big)^r \Big| 
\ll_r e^{-\delta_r w_r(F)} \,  \cS_{q_r}(\varphi)^r,
\]
where the implicit constants only depend on $r$.  In the case when $\nu = \mu$,  we
say that \emph{$\mu$ is $\cW$-mixing with respect to $(\cA,\cS)$}.
\end{definition}

In Section \ref{Sec:ProofThmBn} below,  we consider the case $Z = X_d$ and $G = \SL_d(\bR)$ and
$\mu = \mu_d$,  and provide examples of good pairs $(\cA,\cS)$ and 
triples $(H,\nu, \cW)$ such that $\nu$ equidistributes to $\mu$ along $H$ with respect to the pair $(\cA,\cS)$.

\subsection{$\cW$-equidistribution and Poisson approximation}

Our aim is now to connect $\cW$-equidistribution with Poisson approximation.  This is done in Theorem \ref{Thm_An} below.  \\

In what follows,  let $H$ be a closed subgroup of $G$ and let $(\cA,\cS)$ be a 
good pair.  Let $\nu$ be a probability measure on $Z$ and suppose that $\nu$ 
$\cW$-equidistributes to $\mu$ along $H$ with respect to $(\cA,\cS)$,  for 
some $\cW = (w_r)$.  For every $r \geq 1$,  we extend $w_r$ to a function 
$\widetilde{w}_r$ defined on all finite subsets of $H$ by 
\[
\widetilde{w}_r(F) = \min\{ w_r(F') \,  : \,  F' \subset F,  \enskip |F'| = r \big\}.
\]
If $|F| < r$,  we set $\widetilde{w}_r(F) = 0$.  \\

Let $(F_n)$ be a sequence of finite subsets of $H$ such that $|F_n| \ra \infty$,  and
let $(A_n)$ be a sequence of Borel subsets of $Z$ such that for every $n$,  either 
$A_n$ or $A_n^c$ is pre-compact.  We define
\begin{equation}
\label{Nn_An}
N_n(z) = \#\big\{ h \in F_n \,  : \,  h.z \in A_n \big\},  \quad \textrm{for $z \in Z$}.
\end{equation}
The next theorem is the main result of this section.  It provides a criterion for when
the sequence $(N_n)$ is $\nu$-asymptotically Poissonian in terms of $\cW$ and the behaviour of the sequence $\mu(A^{\pm}_n)$ of real numbers,  where $A_n^{\pm}$ are Borel sets in $Z$ which approximate the set $A_n$ from above and below. 
The spirit of the criterion is similar to \cite[Theorem 2.10 and Theorem 3.7]{DFL},  which deal with $\bR$-flows.  

\begin{theorem}
\label{Thm_An}
Suppose that
\[
\lim_{n \ra \infty} \frac{\widetilde{w}_r(F_n)}{\log |F_n|} = \infty,  \quad \textrm{for all $r \geq 1$}.
\]
and that there exist
\vspace{0.1cm}
\begin{itemize}
\item[$(i)$] a sequence $(V_n)$ of open identity neighbourhoods in $G$ and a sequence $(\rho_n)$ of non-negative smooth functions on $G$ such that
$\supp (\rho_n) \subset V_n$ and $\int_G \rho_n \,  dm_G = 1$ for all $n$,  with the property that for every $q \geq 1$,  there is $\sigma_q > 0$,  such that
\[
\|\rho_n\|_{\cC^{q}(G)} \ll_q |F_n|^{\sigma_q},  
\] 
for all $n$,  where the implicit constants only depend on $q$.   \vspace{0.1cm}
\item[$(ii)$] two sequences $(A_n^{-})$ and $(A_n^{+})$ of Borel sets in $Z$ such that 
\[
V_n.A_n^{-} \subset A_n \qand V_n.A_n \subset A_n^{+},  \quad \textrm{for all $n$},
\]
and such that 
\[
m := \lim_{n \ra \infty} |F_n| \mu(A_n^{+}) =  \lim_{n \ra \infty} |F_n|  \mu(A_n^{-}).
\]
\end{itemize}
\vspace{0.1cm}
Then,  $N_n \xLongrightarrow[\nu]{} \Poi(m)$,  as $n \ra \infty$.
\end{theorem}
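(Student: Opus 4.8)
The plan is to reduce the statement to Lemma~\ref{LemmaPoisson}, applied with $A_{n,h} := \{ z \in Z : h.z \in A_n \} = h^{-1}.A_n$, for which the counting function in \eqref{Nn_An} coincides with the one in \eqref{Def_Nn_General}. Thus it suffices to prove that $\Delta_{n,r}(m) \to 0$ for every $r \geq 1$, where $\Delta_{n,r}$ is the quantity from Lemma~\ref{LemmaPoisson}. Fixing $F' \subset F_n$ with $|F'| = r$, one has
\[
\nu\Big( \bigcap_{h \in F'} h^{-1}.A_n \Big) \;=\; \int_Z \prod_{h \in F'} \chi_{A_n} \circ h \; d\nu ,
\]
and the only obstruction to feeding this integral into $\cW$-equidistribution is that $\chi_{A_n}$ is neither smooth nor a member of $\cA$. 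I would remove it by squeezing $\chi_{A_n}$ between two mollified indicators lying in $\cA$; this is exactly where hypotheses $(i)$ and $(ii)$ enter.

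\emph{Mollification.} Let $(V_n)$, $(\rho_n)$ be as in $(i)$; we may assume $V_n$ symmetric with $\supp\rho_n \subset V_n$. Set $\psi_n^{\pm} := \rho_n * \chi_{A_n^{\pm}}$. From $\supp \rho_n \subset V_n$ and the chain $V_n.A_n^{-} \subset A_n \subset V_n.A_n \subset A_n^{+}$ (the first and third inclusions being $(ii)$, the middle one holding since $e \in V_n$), a direct check yields the pointwise bounds $\psi_n^{-} \leq \chi_{A_n} \leq \psi_n^{+}$ on $Z$. By $G$-invariance of $\mu$ and $\int_G \rho_n \, dm_G = 1$ we have $\int_Z \psi_n^{\pm} \, d\mu = \mu(A_n^{\pm})$, so $(ii)$ gives $|F_n| \int_Z \psi_n^{\pm} \, d\mu = |F_n| \mu(A_n^{\pm}) \to m$. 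Finally, since $(\cA,\cS)$ is a good pair (Definition~\ref{Def_good}) and the sets $A_n^{\pm}$, like $A_n$, are pre-compact or have pre-compact complement --- as holds in all our applications --- we obtain $\psi_n^{\pm} \in \cA$ together with $\cS_q(\psi_n^{\pm}) \ll_q \|\rho_n\|_{C^q} \ll_q |F_n|^{\sigma_q}$ for every $q$, the last estimate being $(i)$.

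\emph{Decoupling.} The pointwise bounds $\psi_n^{-} \leq \chi_{A_n} \leq \psi_n^{+}$ give, for every $F' \subset F_n$ with $|F'| = r$,
\[
\int_Z \prod_{h \in F'} \psi_n^{-} \circ h \; d\nu \;\leq\; \nu\Big( \bigcap_{h \in F'} h^{-1}.A_n \Big) \;\leq\; \int_Z \prod_{h \in F'} \psi_n^{+} \circ h \; d\nu .
\]
Applying $\cW$-equidistribution (Definition~\ref{Def_Wequi}) to $\psi_n^{\pm} \in \cA$ and $F' \subset H$, with $\delta_r > 0$, $q_r \geq 1$ as there,
\begin{align*}
\Big| \int_Z \prod_{h \in F'} \psi_n^{\pm} \circ h \; d\nu - \Big( \int_Z \psi_n^{\pm} \, d\mu \Big)^{r} \Big|
&\;\ll_r\; e^{-\delta_r w_r(F')} \, \cS_{q_r}(\psi_n^{\pm})^{r} \\
&\;\leq\; e^{-\delta_r \widetilde{w}_r(F_n)} \, |F_n|^{\, r \sigma_{q_r}} ,
\end{align*}
where the second line uses $w_r(F') \geq \widetilde{w}_r(F_n)$ and the seminorm bound, with implied constant independent of $F'$. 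Multiplying through by $|F_n|^{r}$, the error becomes $\ll_r |F_n|^{\, r(1+\sigma_{q_r}) - \delta_r \widetilde{w}_r(F_n)/\log|F_n|}$, and since $\widetilde{w}_r(F_n)/\log|F_n| \to \infty$ its exponent tends to $-\infty$; hence the error tends to $0$ uniformly in $F'$. Combined with $\big( |F_n| \int_Z \psi_n^{\pm} \, d\mu \big)^{r} \to m^{r}$, the displayed sandwich forces $|F_n|^{r}\, \nu\big(\bigcap_{h \in F'} h^{-1}.A_n\big) \to m^{r}$ uniformly over admissible $F'$, i.e. $\Delta_{n,r}(m) \to 0$. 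Since $r$ was arbitrary, Lemma~\ref{LemmaPoisson} gives $N_n \xLongrightarrow[\nu]{} \Poi(m)$.

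I expect the one genuinely delicate point to be the balance in the last step between the merely polynomial growth $\cS_{q_r}(\psi_n^{\pm}) \ll |F_n|^{\sigma_{q_r}}$ of the mollified targets (which is all $(i)$ provides) and the stretched-exponential decay $e^{-\delta_r \widetilde{w}_r(F_n)}$ coming from $\cW$-equidistribution: the hypothesis $\widetilde{w}_r(F_n)/\log|F_n| \to \infty$ for every $r$ is precisely what makes the former negligible against the latter, uniformly in $F'$. Everything else --- the pointwise sandwich, the constancy of $\int_Z \psi_n^{\pm} \, d\mu$ under $G$-averaging, and the passage from $\Delta_{n,r} \to 0$ to the Poisson limit --- is routine bookkeeping.
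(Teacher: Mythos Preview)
Your proposal is correct and follows essentially the same route as the paper's own proof: mollify $\chi_{A_n}$ by $\psi_n^{\pm}=\rho_n*\chi_{A_n^{\pm}}$ to obtain the pointwise sandwich, use $G$-invariance of $\mu$ to identify $\int_Z\psi_n^{\pm}\,d\mu=\mu(A_n^{\pm})$, feed $\psi_n^{\pm}\in\cA$ into the $\cW$-equidistribution inequality with the seminorm bound $\cS_{q_r}(\psi_n^{\pm})\ll_r|F_n|^{\sigma_{q_r}}$, and conclude via Lemma~\ref{LemmaPoisson}. Your rewriting of the error as $|F_n|^{r(1+\sigma_{q_r})-\delta_r\widetilde{w}_r(F_n)/\log|F_n|}$ makes the role of the hypothesis $\widetilde{w}_r(F_n)/\log|F_n|\to\infty$ slightly more transparent than the paper's presentation, and your explicit remark that $A_n^{\pm}$ (not just $A_n$) must be pre-compact or have pre-compact complement for the good-pair property to apply is a point the paper glosses over.
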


\subsection{Proof of Theorem \ref{Thm_An}}

We define $\varphi_n^{\pm} = \rho_n * \chi_{A_n^{\pm}}$.  Since $\rho_n \geq 0$ and $\supp(\rho_n) \subset V_n$,  the inclusions in our assumption (ii) imply
\[
\varphi_n^{-} \leq \chi_{A_n} \leq \varphi_n^{+},
\]
and thus,  for every finite subset $F \subset H$
\begin{equation}
\label{incl}
\int_{Z} \prod_{h \in F} \varphi_n^{-} \circ h \,  d\nu
\leq
\nu\Big(\bigcap_{h \in F'} h^{-1}.A_n\Big)
\leq \int_{Z} \prod_{h \in F'} \varphi_n^{+} \circ h \,  d\nu.
\end{equation}
Furthermore,  since $\mu$ is $G$-invariant and $\int_G \rho_n \,  dm_G = 1$,  we have
\begin{equation}
\label{measphipm}
\int_Z \varphi^{+}_n \,  d\mu = \mu(A_n^{+}) \qand \int_Z \varphi_n^{-} \,  d\mu = \mu(A_n^{-}). 
\end{equation}
Since $(\cA,\cS)$ is a good pair,  and since either $A_n$ of $A_n^c$ is pre-compact,  we have $\varphi_n^{\pm} \in \cA$,  and
\begin{equation}
\label{Sq}
\cS_q(\varphi_n^{\pm}) \ll_q \|\rho_n\|_{C^{q}(G)} \ll_q |F_n|^{\sigma_{q}},  \quad \textrm{for all $q \geq 1$},
\end{equation}
where we in the last inequality used our assumption (i).  We stress that the implicit constants are independent of $n$.  \\

By assumption,  $\nu$ $\cW$-equidistributes to $\mu$ along $H$ with respect to $(\cA,\cS)$,  so it follows from  \eqref{measphipm} and \eqref{Sq} that
\begin{equation}
\label{boundAnpm}
\Big| 
\int_X \prod_{h \in F'} \varphi_n^{\pm} \circ h \,  d\nu - \mu(A_n^{\pm})^r 
\Big| \ll_r |F_n|^{r \sigma_{q_r}} \cdot e^{-\delta_r w_r(F')}
\end{equation}
for every subset $F' \subset F_n$ with $r$ elements.  We stress that the implicit constants only depend on $r$.   From \eqref{incl} and \eqref{boundAnpm} we now
conclude that
\begin{align}
|F_n|^r \nu\Big(\bigcap_{h \in F'} h^{-1}.A_n) - m^r
&\leq 
|F_n|^r \Big( \int_Z \prod_{h \in F'} \varphi_n^{+} \circ h - \mu(A_n^{+})^r \Big)
+
\big(|F_n| \mu(A_n^{+})\big)^r  - m^r \nonumber \\[0.2cm]
&\ll_r
\big(|F_n| \mu(A_n^{+})\big)^r  - m^r + |F_n|^{r(1+\sigma_{q_r})} \cdot e^{-\delta_r w_r(F')} 
\nonumber \\[0.3cm]
&\ll_r
\big(|F_n| \mu(A_n^{+})\big)^r  - m^r + |F_n|^{r(1+\sigma_{q_r})}\cdot e^{-\delta_r \widetilde{w}_r(F_n)}, \label{up1}
\end{align}
for every subset $F'$ of $F_n$ with $r$ elements.  Similarly, 
\begin{equation}
\label{down1}
|F_n|^r \nu\Big(\bigcap_{h \in F'} h^{-1}.A_n) - m^r \gg_r 
\big(|F_n| \mu(A_n^{-})\big)^r  - m^r - |F_n|^{r(1+\sigma_{q_r})} \cdot e^{-\delta_r \widetilde{w}_r(F_n)},
\end{equation}
for every subset $F'$ of $F_n$ with $r$ elements.  \\

Since we assume that 
\[
\lim_{n \ra \infty} \frac{\widetilde{w}_r(F_n)}{\log |F_n|} = \infty  \qand 
m = \lim_{n \ra \infty} |F_n| \mu(A_n^{+}) = \lim_{n \ra \infty} |F_n| \mu(A_n^{-}),  
\]
we see from \eqref{up1} and \eqref{down1} that
\[
\lim_{n \ra \infty} \max\Big\{ \Big| |F_n|^r \nu\Big(\bigcap_{h \in F'} h^{-1}.A_n\Big) - m^r \Big| \,  : \,  F' \subset F_j,  \enskip |F'| = r \Big\} = 0,
\]
for every $r \geq 1$.   By Lemma \ref{LemmaPoisson},  this shows that $N_n \xLongrightarrow[\nu]{} \Poi(m)$,  as $n \ra \infty$.

\section{Siegel transforms}
\label{Sec:SiegelTransforms}

We recall that $X_d$ denotes the space of unimodular lattices in $\bR^d$,  equipped with the Chabauty topology.  The standard $\SL_d(\bR)$-action on $X_d$ is jointly
continuous and there is a unique $\SL_d(\bR)$-invariant Borel probability measure 
on $X_d$,  which we denote by $\mu_d$.

\subsection{Hitting sets}

Given a Borel set $B \subset \bR^d$,  we define the \emph{hitting set} 
$\Omega_B \subset X_d$ by
\begin{equation}
\label{def_OmegaB}
\Omega_B = \big\{ \Lambda \in X_d \,  : \,  \Lambda \cap B \neq \{0\} \big\}. 
\end{equation}
We note that $\Omega_B$ is a Borel set in $X_d$.  By Mahler's Compactness Criterion \cite[Chapter V,  Theorem IV]{Cassels},  the complement $\Omega_B^c$ is compact in $X_d$ if $B$ contains a neighborhood of $0$ in $\bR^d$.

\subsection{Siegel transforms}

Given $\Lambda \in X_d$,  we denote by $\Lambda_{\textrm{pr}}$ the set of primitive vectors in $\Lambda$.  If $f : \bR^d \ra \bC$ is a Borel function with bounded support,  we define its \emph{Siegel transform} $\widehat{f} : X_d \ra \bC$ by
\begin{equation}
\widehat{f}(\Lambda) = \hspace{-0.2cm} \sum_{\hspace{0.2cm} \lambda \in \Lambda_{\textrm{pr}}} \hspace{-0.1cm}f(\lambda),  \quad \textrm{for $\Lambda \in X_d$}.
\end{equation}
In particular,  if $B$ is a bounded Borel set in $\bR^d$,  then $\widehat{\chi}_B(\Lambda) = |B \cap \Lambda_{\textrm{pr}}|$.  \\

The following theorem summarizes important special cases of two fundamental formulas of Siegel and Rogers.  

\begin{theorem}[Siegel's and Rogers' formulas]
\label{Thm_SiegelRogers}
Let $B \subset \bR^d$ be a bounded and symmetric Borel set.  
\vspace{0.1cm}
\begin{itemize}
\item[$(i)$] \textsc{Siegel,  \cite[Section 2]{Siegel}} If $d \geq 2$,  then $\widehat{\chi}_B \in L^1(\mu_d)$ and 
\[
\int_{X_d} \widehat{\chi}_B(\Lambda) \,  d\mu_d(\Lambda) = \frac{\Vol_d(B)}{\zeta(d)},
\]
\item[$(ii)$] \textsc{Rogers,   \cite[Theorem 5]{Rogers}} If $d \geq 3$,  then $\widehat{\chi}_B \in L^2(\mu_d)$,  and
\[
\int_{X_d} \widehat{\chi}_B(\Lambda)^2 \,  d\mu_d(\Lambda) = 
2 \frac{\Vol_d(B)}{\zeta(d)}  + \Big( \frac{\Vol_d(B)}{\zeta(d)}\Big)^2.
\]
\end{itemize}
\end{theorem}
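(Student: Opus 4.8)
The final statement to prove is Theorem \ref{Thm_SiegelRogers}, which collects Siegel's mean-value formula and Rogers' second-moment formula for Siegel transforms of indicator functions.

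\bigskip

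\textbf{Plan.} The statement is a specialization of two classical integration formulas over $X_d = \SL_d(\bR)/\SL_d(\bZ)$ to the case $f = \chi_B$. The strategy is to invoke the general Siegel and Rogers formulas and simplify the resulting sums, using that $B$ is symmetric (so primitive vectors pair off as $\pm\lambda$) and the standard identity $\sum_{k\geq 1} k^{-d} = \zeta(d)$ that produces the factors of $\zeta(d)^{-1}$.

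\bigskip

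\textbf{Step 1: Siegel's formula (part (i)).} Siegel's mean-value theorem \cite[Section 2]{Siegel} asserts that for any $f \in L^1(\bR^d)$ one has
\[
\int_{X_d} \widehat{f}(\Lambda)\, d\mu_d(\Lambda) = \frac{1}{\zeta(d)} \int_{\bR^d} f(x)\, dx,
\]
where the Siegel transform is the sum over \emph{primitive} vectors. (If instead one sums over \emph{all} nonzero lattice vectors, the factor $\zeta(d)^{-1}$ disappears; the two normalizations are related by $\sum_{v \ne 0} f(v) = \sum_{k \geq 1}\sum_{\lambda \text{ prim}} f(k\lambda)$ and a change of variables.) Applying this with $f = \chi_B$ for a bounded Borel set $B$ gives $\int_{X_d}\widehat{\chi}_B\, d\mu_d = \Vol_d(B)/\zeta(d)$ and, in particular, finiteness, i.e. $\widehat{\chi}_B \in L^1(\mu_d)$. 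This requires only $d \geq 2$, since Siegel's formula holds in that range. I would simply cite this; no symmetry of $B$ is needed for part (i).

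\bigskip

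\textbf{Step 2: Rogers' formula (part (ii)).} For $d \geq 3$, Rogers' formula \cite[Theorem 5]{Rogers} evaluates the integral of a product $\widehat{f_1}\widehat{f_2}$ (equivalently $\int \widehat{f}^{\,2}$ when $f_1 = f_2 = f$) as a sum of two types of terms: a ``diagonal'' contribution and a contribution indexed by pairs of coprime integers $(p,q)$. Concretely, in the primitive-vector normalization,
\[
\int_{X_d} \widehat{f}(\Lambda)^2\, d\mu_d(\Lambda)
= \frac{1}{\zeta(d)}\int_{\bR^d}\!\big(f(x)^2 + f(x)f(-x)\big)dx
\;+\; \Big(\frac{1}{\zeta(d)}\int_{\bR^d} f\Big)^{\!2},
\]
where the $f(x)f(-x)$ term comes from the fact that $\lambda$ and $-\lambda$ are both primitive and distinct, and the last square term is the ``main term'' coming from genuinely independent pairs of primitive directions. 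Now specialize to $f = \chi_B$ with $B$ symmetric: then $f(x)^2 = f(x)$ and $f(x)f(-x) = \chi_B(x)\chi_B(-x) = \chi_B(x) = f(x)$, so the first integral equals $2\Vol_d(B)$. This yields exactly
\[
\int_{X_d}\widehat{\chi}_B^{\,2}\, d\mu_d = \frac{2\Vol_d(B)}{\zeta(d)} + \Big(\frac{\Vol_d(B)}{\zeta(d)}\Big)^{\!2},
\]
and in particular $\widehat{\chi}_B \in L^2(\mu_d)$ (the right-hand side being finite since $B$ is bounded). The restriction $d \geq 3$ is exactly the range in which Rogers' second-moment formula is available and the $L^2$-norm is finite.

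\bigskip

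\textbf{Main obstacle.} There is no genuine difficulty here: both identities are quoted from the literature, and the only content is the bookkeeping of normalizations (primitive vs.\ all nonzero vectors, and the $\pm$ pairing forced by symmetry). The one point that deserves care is making sure the two ``off-diagonal'' sub-cases in Rogers' theorem collapse correctly under $f_1 = f_2 = \chi_B$ with $B$ symmetric, so that the coefficient of $\Vol_d(B)/\zeta(d)$ is precisely $2$ and the remaining term is precisely the square $(\Vol_d(B)/\zeta(d))^2$ with coefficient $1$ — this is where a sign or a missing term would change the constant $m_o$ in the applications, so I would verify it explicitly against \cite[Theorem 5]{Rogers}.
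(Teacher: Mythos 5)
Your proposal is correct and matches the paper's approach: the paper provides no proof of Theorem \ref{Thm_SiegelRogers} at all, only the citations to \cite[Section 2]{Siegel} and \cite[Theorem 5]{Rogers}, and your write-up is exactly the expected bookkeeping (primitive-vector normalization giving the $\zeta(d)^{-1}$, the diagonal terms $\lambda_2 = \pm\lambda_1$ producing $\int(f^2 + f(x)f(-x))$, and the symmetry of $B$ collapsing that to $2\Vol_d(B)$). Your instinct to verify the coefficient $2$ explicitly against Rogers is well placed, since it is precisely what enters the constant $m_o$ downstream; one small caveat is that Rogers' Theorem 5 is stated for products $\widehat{f_1}\widehat{f_2}$ over general $k$-tuples, and the two off-diagonal cases you mention are the $(p,q)=(1,1)$ and $(1,-1)$ divisor pairs in his notation, so when checking the source make sure you are reading the primitive-vector version rather than the all-lattice-points version (where the $\zeta$-factors are absent).
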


\subsection{Volume asymptotics}

We recall that a subset $B \subset \bR^d$ is \emph{balanced} if $t  B \subset B$ for all $|t| \leq 1$.  Note that every balanced set is symmetric.  

\begin{lemma}
\label{LemmaOmegaB}
Let $d \geq 2$.  For every bounded and balanced Borel set $B \subset \bR^d$,  we have
\[
\Omega_B = \{ \widehat{\chi}_{B} > 0 \} = \{ \widehat{\chi}_{B} \geq 2 \}.
\]
and
\[
\mu_d(\Omega_B) \leq \frac{\Vol_d(B)}{2 \zeta(d)}.
\]
\end{lemma}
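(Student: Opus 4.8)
The plan is to prove the two assertions separately. For the set-theoretic identity, first I would observe that $\Omega_B \supseteq \{\widehat\chi_B \geq 2\} \supseteq \{\widehat\chi_B > 0\}$ trivially reverses to $\Omega_B \subseteq \{\widehat\chi_B > 0\}$ once we check that a lattice meeting $B\setminus\{0\}$ must meet it in a \emph{primitive} vector: indeed, if $0 \neq v \in \Lambda \cap B$, write $v = k w$ with $w \in \Lambda_{\mathrm{pr}}$ and $k \in \bN$; since $B$ is balanced, $w = (1/k) v \in B$ as well, so $w \in \Lambda_{\mathrm{pr}} \cap B$ and $\widehat\chi_B(\Lambda) \geq 1$. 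This gives $\Omega_B \subseteq \{\widehat\chi_B > 0\}$. For the final equality $\{\widehat\chi_B > 0\} = \{\widehat\chi_B \geq 2\}$, I would use that $B$ is symmetric (every balanced set is, as remarked): if $w \in \Lambda_{\mathrm{pr}} \cap B$ then $-w \in \Lambda_{\mathrm{pr}} \cap B$ too, and $w \neq -w$ since $w \neq 0$, so primitive vectors in $B$ come in pairs and $\widehat\chi_B$ is either $0$ or at least $2$.

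For the volume bound, the idea is to combine the identity just proved with Siegel's formula (Theorem \ref{Thm_SiegelRogers}(i)). Since $\Omega_B = \{\widehat\chi_B \geq 2\}$, Markov's inequality applied to the non-negative integer-valued function $\widehat\chi_B \in L^1(\mu_d)$ gives
\[
\mu_d(\Omega_B) = \mu_d\big(\{\widehat\chi_B \geq 2\}\big) \leq \frac{1}{2} \int_{X_d} \widehat\chi_B \, d\mu_d = \frac{\Vol_d(B)}{2\zeta(d)},
\]
where the last equality is exactly Siegel's formula (valid for $d \geq 2$ and $B$ bounded symmetric, both of which hold here). This is the whole argument.

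The only point requiring a little care — and the closest thing to an obstacle — is the primitivity step: one must be sure that a balanced set containing a nonzero lattice vector also contains a primitive one, which is where the "balanced" hypothesis (rather than merely "symmetric") is genuinely used, via $w = (1/k)v$ with $1/k \leq 1$. Everything else is the pairing observation for symmetric sets and a one-line invocation of Markov's inequality together with Siegel's formula; no volume asymptotics or Rogers' second-moment formula are needed for this particular lemma.
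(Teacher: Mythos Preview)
Your proof is correct and follows essentially the same route as the paper: the paper also reduces $\Omega_B = \{\widehat\chi_B > 0\}$ to the balanced condition (phrased set-theoretically via $\Lambda\setminus\{0\} = \bigcup_{k\ge 1} k\Lambda_{\mathrm{pr}}$ and $\frac{1}{k}B \subseteq B$, which is exactly your ``divide by the gcd'' step), then uses symmetry to get evenness of $\widehat\chi_B$, and finally applies Siegel's formula together with what you call Markov's inequality for the measure bound. One small presentational slip: your displayed chain $\Omega_B \supseteq \{\widehat\chi_B \geq 2\} \supseteq \{\widehat\chi_B > 0\}$ has the last inclusion the wrong way around (trivially $\{\widehat\chi_B \geq 2\} \subseteq \{\widehat\chi_B > 0\}$), but the substance of your argument is unaffected.
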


\begin{proof}
Since $B$ is balanced,  we have $\frac{1}{k} B \subseteq B$ for every integer $k \geq 1$,  and thus
\vspace{0.1cm}
\begin{align*}
\Omega_B 
&= \Big\{\Lambda \,  : \,  \Lambda \cap B \neq \{0\} \Big\} =\Big\{ \Lambda \,  : \,  \bigcup_{k \geq 1} \Big( k.\Lambda_{\textrm{pr}} \cap B \Big)  \neq \{0\} \Big\} \\
&=
\Big\{ \Lambda \,  : \,  \bigcup_{k \geq 1} k.  \Big(\Lambda_{\textrm{pr}} \cap \frac{1}{k}B \Big)  \neq \{0\} \Big\} = 
\big\{ \Lambda \,  : \,  \Lambda_{\textrm{pr}} \cap B  \neq \{0\} \big\}.
\end{align*}
This shows that $\Omega_B = \{ \widehat{\chi}_B > 0 \}$.  Furthermore,  since $B$
is symmetric,  $|\Lambda_{\textrm{pr}} \cap B|$ must be an even integer for every $\Lambda \in X_d$,  which proves that we also have $\Omega_B = \{ \widehat{\chi}_{B} \geq 2 \}$.  Finally,  we note that by Siegel's formula (Theorem \ref{Thm_SiegelRogers} (i)),  
\[
\frac{\Vol_d(B)}{\zeta(d)} = \int_{X_d} \widehat{\chi}_B \,  d\mu_d \geq 
\int_{\{\widehat{\chi}_B \geq 2\}} \widehat{\chi}_B \,  d\mu_d \geq 2 \cdot \mu_d(\Omega_B).
\]
\end{proof}

\begin{lemma}
\label{LemmaB}
Let $d \geq 3$.  For every bounded and balanced Borel set $B \subset \bR^d$, 
we have
\[
\Big| \, \mu_d(\Omega_B) - \frac{\Vol_d(B)}{2\zeta(d)} \,  \Big| \leq \Big( \frac{\Vol_d(B)}{2\zeta(d)} \Big)^2.
\]
\end{lemma}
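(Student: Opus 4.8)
The plan is to relate $\mu_d(\Omega_B)$ to the first two moments of the Siegel transform $\widehat{\chi}_B$ and then apply Cauchy--Schwarz. Write $V := \Vol_d(B)/\zeta(d)$. Since $B$ is bounded, $\widehat{\chi}_B(\Lambda) = |\Lambda_{\mathrm{pr}} \cap B|$ is a finite nonnegative integer for every $\Lambda$, and since $B$ is balanced (hence symmetric) the primitive vectors of $\Lambda$ lying in $B$ come in pairs $\{v,-v\}$, so this integer is even. Accordingly I would set $g := \tfrac12 \widehat{\chi}_B$, a nonnegative integer-valued function, which by Rogers' formula (Theorem \ref{Thm_SiegelRogers}$(ii)$, applicable since $d \geq 3$) lies in $L^2(\mu_d)$. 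By Lemma \ref{LemmaOmegaB} we have $\Omega_B = \{\widehat{\chi}_B \geq 2\} = \{g \geq 1\}$, hence $\mu_d(\Omega_B) = \mu_d(\{g \geq 1\})$. Siegel's and Rogers' formulas then give
\[
\int_{X_d} g \, d\mu_d = \frac{V}{2} \qquad \textrm{and} \qquad \int_{X_d} g^2 \, d\mu_d = \frac14\bigl(2V + V^2\bigr) = \frac{V}{2} + \frac{V^2}{4}.
\]

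For the upper estimate, Markov's inequality (equivalently, the bound already recorded in Lemma \ref{LemmaOmegaB}) gives $\mu_d(\Omega_B) \leq \int_{X_d} g \, d\mu_d = V/2$, so $\mu_d(\Omega_B) - V/2 \leq 0 \leq (V/2)^2$. For the lower estimate I would apply Cauchy--Schwarz to the factorization $g = g \cdot \mathbf{1}_{\{g \geq 1\}}$:
\[
\frac{V}{2} = \int_{X_d} g \, d\mu_d \leq \Bigl( \int_{X_d} g^2 \, d\mu_d \Bigr)^{1/2} \mu_d(\{g \geq 1\})^{1/2},
\]
whence $\mu_d(\Omega_B) \geq (V/2)^2 / (V/2 + V^2/4) = V/(2+V)$. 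Since $V/(2+V) = V/2 - V^2/\bigl(2(2+V)\bigr) \geq V/2 - V^2/4$, we obtain $\mu_d(\Omega_B) - V/2 \geq -(V/2)^2$. Combining the two estimates yields $|\mu_d(\Omega_B) - V/2| \leq (V/2)^2$, which is precisely the assertion. As an alternative to Cauchy--Schwarz, one may integrate the elementary pointwise inequality $\mathbf{1}_{\{g \geq 1\}} \geq g - \tfrac12 g(g-1)$, valid for all nonnegative integers, against $\mu_d$, using $\int_{X_d} g(g-1)\,d\mu_d = \int_{X_d} g^2\,d\mu_d - \int_{X_d} g\,d\mu_d = V^2/4$; this even yields the sharper bound $V^2/8$.

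I do not expect a serious obstacle: the content is entirely in the two quadratic formulas of Siegel and Rogers. The only point deserving care is the parity observation that dictates working with $g = \tfrac12 \widehat{\chi}_B$ in place of $\widehat{\chi}_B$ itself --- without the factor $\tfrac12$ the second-moment estimate would lose the factor $2$ that distinguishes Rogers' formula from the square of Siegel's --- together with the elementary algebra that reduces $V/(2+V)$ to the stated bound $(V/2)^2$.
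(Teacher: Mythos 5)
Your proof is correct and, up to the cosmetic normalization $g = \tfrac12\widehat{\chi}_B$ (which merely halves all quantities), it is the same argument as the paper's: Cauchy--Schwarz applied to $\widehat{\chi}_B = \chi_{\Omega_B}\,\widehat{\chi}_B$, the Siegel and Rogers formulas for the first and second moments, and the elementary bound $\tfrac{1}{1+t}\geq 1-t$. Your alternative via the Bonferroni-type pointwise inequality $\mathbf{1}_{\{g\geq 1\}} \geq g - \tfrac12 g(g-1)$ is a nice observation and does give the sharper constant $V^2/8$, but it is not the route the paper takes.
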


\begin{proof}
By Lemma \ref{LemmaOmegaB},  we have $\mu_d(\Omega_B) \leq \frac{\Vol_d(B)}{2 \zeta(d)}$,  so we only need to show that
\begin{equation}
\label{want}
\mu_d(\Omega_B) \geq \frac{\Vol_d(B)}{2\zeta(d)} - \Big(\frac{\Vol_d(B)}{2\zeta(d)} \Big)^2.
\end{equation}
We note that
\[
\int_{X_d} \widehat{\chi}_B \,  d\mu_d = 
\int_{X_d} \chi_{\{\widehat{\chi}_B > 0\}} \cdot \widehat{\chi}_B \,  d\mu_d
\leq \mu_d(\Omega_B)^{1/2} \cdot \Big( \int_{X_d} \widehat{\chi}_B^2 \,  d\mu_d \Big)^{1/2},
\]
and thus,  by Theorem \ref{Thm_SiegelRogers} (i) and (ii).
\[
\Big( \frac{\Vol_d(B)}{\zeta(d)} \Big)^2 \leq \mu_d(\Omega_B) \cdot \Big( 2 \frac{\Vol_d(B)}{\zeta(d)}  + \Big( \frac{\Vol_d(B)}{\zeta(d)}\Big)^2 \Big).
\]
From this we conclude that
\[
\mu_d(\Omega_B) \geq \frac{\Vol_d(B)}{2\zeta(d)} \cdot \frac{1}{1 +\frac{\Vol_d(B)}{2\zeta(d)}} \geq \frac{\Vol_d(B)}{2\zeta(d)} - \Big(\frac{\Vol_d(B)}{2\zeta(d)}\Big)^2,
\]
where we in the last inequality used the trivial inequality $\frac{1}{1+t} \geq 1 - t$ 
for all $t \geq 0$.  
\end{proof}

\begin{remark}
\label{Rmk_SmallVolume}
Kleinbock and Margulis have proved a version of Lemma \ref{LemmaB} for \emph{convex} sets in \cite[Proposition 7.1]{KM}.  They claim that their
lemma also holds for $d = 2$,  but unfortunately their argument relies on 
\cite[Theorem 7.3]{KM},  which cannot be applied in the case when $d=k=2$ (the relevant parameters in \cite[Proposition 7.1]{KM} for $d = 2$).  

There are some 
versions of Roger's formula for $d = 2$ by Athreya-Margulis \cite[Subsection 4.2]{AM} and Kelmer-Yu \cite[Theorem 1]{KY}.  However,  it seems to be difficult to use these
formulas to deduce small volume asymptotics as in Lemma \ref{LemmaB}.
\end{remark}

\section{Proof of Theorem \ref{Thm_Bn}}
\label{Sec:ProofThmBn}

\subsection{A good pair}

Let $\cA = \bC \cdot 1 + C^\infty_c(X_d)$.  Note that if $A$ is Borel subset of $X_d$
such that either $A$ or its complement $A^c$ is compact,  and $\rho$ is a complex-valued,  smooth and compactly supported function on $G$,  then $\rho * \chi_A \in \cA$.  We will construct a family $\cS = (\cS_q)$
of norms on the algebra $\cA$ such that the pair $(\cA,\cS)$ is good (see Definition \ref{Def_good}).  Given a non-zero element $Y$ in the Lie algebra  $\gos\gol_d(\bR)$,  we define the differential operator $D_Y$ on $\cA$ by
\[
D_Y \varphi(x) = \frac{d}{dt}\varphi(e^{tY}.x) \mid_{t=0},  \quad \textrm{for $x \in X_d$}.
\]
Let $\{Y_1,\ldots,Y_{d^2-1}\}$ be a basis for $\gos\gol_d(\bR)$.  For every 
$\alpha = (\alpha_1,\ldots,\alpha_{d^2-1}) \in \bN^{d^2-1}$,  we set
$D_\alpha := D_{1}^{\alpha_1} \cdots D_{d^2-1}^{\alpha_{d^2-1}}$,
and for $q \geq 1$ and $\varphi \in \cA$,  we define 
\[
\|\varphi\|_{C^q(X_d)} = \max\{ \|D_\alpha \varphi\|_{\infty} \,  : \,  \alpha_1 + \ldots + \alpha_{d^2 - 1} \leq q \big\}.
\]
Let us fix a proper right-invariant distance function on $\SL_d(\bR)$.  Since $\SL_d(\bR)$ acts transitively on $X_d$ with uniformly discrete stabilizers,  we can define a (non-invariant) quotient metric on $X_d$,  and write $\Lip$ for the corresponding Lipschitz norm.  We now define
\begin{equation}
S_q(\varphi) = \max(\|\varphi\|_{C^q(X_d)},\Lip(\varphi)),  \quad \textrm{for $q \geq 1$}.
\end{equation}
It is not difficult to see that if $A \subset X_d$ is a Borel set such that either $A$ or $A^c$ is pre-compact,  and $\rho \in C_c^\infty(G)$,  then 
\[
S_q(\rho * \chi_A) \ll_q \|\rho\|_{C^q},  \quad \textrm{for all $q \geq 1$},
\]
where the implict constants are independent of the Borel set $A$,  and $\|\cdot\|_{C^q}$ is defined as in \eqref{def_Cq}.  Hence $(\cA,\cS)$ is a good pair.  \\

We recall that
\[
W_\eps = \big\{ g \in \SL_d(\bR) \,  : \,  \|g - \id\|_{\textrm{op}} < \eps \big\}, \quad \textrm{for $\eps > 0$},
\]
where $\|\cdot\|_{\textrm{op}}$ is the operator norm with respect to the $\ell^\infty$-norm on $\bR^d$.  The proof of the following simple lemma is left to the reader.  

\begin{lemma}
\label{Lemma_rhoeps}
There is a family $(\widetilde{\rho}_\eps)$ of non-negative smooth functions on $\SL_d(\bR)$
such that $\supp \widetilde{\rho}_\eps \subset W_\eps$ and $\int \widetilde{\rho}_\eps \,  dm_{\SL_d(R)} = 1$
for every $\eps > 0$,  and with the property that for every $q \geq 1$,  there exists $\kappa_q > 0$ such that
\[
\|\widetilde{\rho}_\eps\|_{C^q} \ll \eps^{-\kappa_q},  \quad \textrm{for all $\eps > 0$},
\]
where the implicit constants only depend on $q$.
\end{lemma}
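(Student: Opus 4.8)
The plan is to build $\widetilde{\rho}_\eps$ by the standard mollifier construction, transported from the Lie algebra $\gos\gol_d(\bR)$ to $\SL_d(\bR)$ via the exponential map, and then to read off the $C^q$-bound from a routine rescaling estimate. It suffices, and is all that is needed for the applications, to treat $\eps \in (0,1]$; for $\eps > 1$ one simply sets $\widetilde{\rho}_\eps := \widetilde{\rho}_1$ (then $\supp\widetilde{\rho}_\eps \subset W_1 \subset W_\eps$, and the $C^q$-bound is only invoked in the regime $\eps \to 0^+$, cf.\ \S\ref{subsec:whyBn}). So fix the basis $\{Y_1,\dots,Y_{d^2-1}\}$ of $\gos\gol_d(\bR)$ chosen above, write $\Phi(v) := \exp\!\big(\textstyle\sum_i v_i Y_i\big)$, which is a real-analytic diffeomorphism of a fixed ball $\{|v| < r_0\} \subset \bR^{d^2-1}$ onto an identity neighbourhood in $\SL_d(\bR)$, and fix once and for all a non-negative $\phi \in C_c^\infty(\bR^{d^2-1})$ supported in the closed unit ball with $\int \phi = 1$.

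Next I would set up the construction. Since $\SL_d(\bR)$ is unimodular, a Haar measure $m_{\SL_d(\bR)}$ has, in the coordinates $v \mapsto \Phi(v)$, a strictly positive real-analytic density $J(v)$ against Lebesgue measure on $\{|v| < r_0\}$ (the classical Jacobian of $\exp$, with $J(0) > 0$); in particular $1/J$ is smooth and bounded together with all its derivatives on $\{|v| \le r_0/2\}$. Using $\|\exp X - \id\|_{\textrm{op}} \le \|X\|_{\textrm{op}} e^{\|X\|_{\textrm{op}}}$ and the equivalence of norms on $\gos\gol_d(\bR)$, there is $c_1 \in (0,r_0)$ with $\Phi(v) \in W_\eps$ whenever $|v| \le c_1\eps$. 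Then, for $\eps \in (0,1]$, put $\delta := \min(c_1,r_0/2)\,\eps$ and define $\widetilde{\rho}_\eps(\Phi(v)) := \delta^{-(d^2-1)}\phi(v/\delta)/J(v)$ for $|v| < r_0$, extended by zero. One checks at once that $\widetilde{\rho}_\eps \ge 0$, that $\widetilde{\rho}_\eps$ is smooth with $\supp\widetilde{\rho}_\eps \subset \Phi(\{|v| \le \delta\}) \subset W_\eps$, and that
\[
\int_{\SL_d(\bR)} \widetilde{\rho}_\eps \, dm_{\SL_d(\bR)} = \int_{\{|v| < r_0\}} \frac{\delta^{-(d^2-1)}\phi(v/\delta)}{J(v)}\, J(v)\, dv = \int_{\bR^{d^2-1}} \delta^{-(d^2-1)}\phi(v/\delta)\, dv = 1.
\]

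Finally I would bound $\|\widetilde{\rho}_\eps\|_{C^q}$ as in \eqref{def_Cq}. Because $\supp\widetilde{\rho}_\eps$ lies in the fixed compact set $\Phi(\{|v| \le r_0/2\})$, it is enough to estimate $D_\alpha\widetilde{\rho}_\eps$ for $|\alpha| \le q$ inside the chart. In the coordinates $v$ each $D_{Y_i}$ becomes a first-order operator $\sum_j a_{ij}(v)\,\partial_{v_j}$ with real-analytic coefficients on $\{|v| < r_0\}$ (arising from the differential of $\exp$ and its inverse), hence with all their derivatives bounded by a constant depending only on $d$ and $q$ on $\{|v| \le r_0/2\}$; so $D_\alpha$ with $|\alpha| \le q$ is, there, a differential operator of order $\le q$ with $C^q$-bounded coefficients. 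Since $\partial_v^\beta\big(\delta^{-(d^2-1)}\phi(v/\delta)\big) = \delta^{-(d^2-1)-|\beta|}(\partial^\beta\phi)(v/\delta)$ and $1/J$ is $C^q$-bounded on $\{|v|\le r_0/2\}$, the product rule gives $\|D_\alpha\widetilde{\rho}_\eps\|_\infty \ll_{d,q} \delta^{-(d^2-1)-q} \ll_{d,q} \eps^{-(d^2-1+q)}$, so $\kappa_q := d^2-1+q$ works. The only point requiring any care — and it is entirely classical — is the assertion that the vector fields $D_{Y_i}$, defined through the $\SL_d(\bR)$-action, have smooth coefficients bounded together with all derivatives on a fixed identity neighbourhood in the exponential chart; granting this, the lemma reduces to the rescaling estimate above, and the remaining verifications (smoothness, support, normalisation, non-negativity) are immediate. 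I do not expect any genuine obstacle here; this is exactly why the paper leaves it to the reader.
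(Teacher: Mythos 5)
The paper explicitly leaves the proof of this lemma to the reader, so there is no proof in the paper to compare against; your construction — a rescaled mollifier pulled back through the exponential chart, corrected by the Haar density, with the $C^q$-bound read off from the chain/Leibniz rule and $\delta \asymp \eps$ — is exactly the standard argument the authors clearly have in mind, and it is correct (in particular the exponent $\kappa_q = d^2-1+q$ is what the scaling gives).
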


\subsection{$\cW$-mixing for $\mu_{d}$}

Let $H_d < \SL_d(\bR)$ be as in Subsection \ref{subsec:Poissomud}.  If $F \subset H_d$ is a finite set with $r$ elements,  we define $w_r(F) := v_d(F)$,  where $v_d$ is defined as in \eqref{def_vd},  and set $\cW = (w_r)$.  The following theorem was proved by the authors and Manfred Einsiedler \cite[Theorem 1.1]{BEG}.  

\begin{theorem}
\label{Thm_BEG}
For every $d \geq 2$,  $\mu_d$ is $\cW$-mixing with respect to $(\cA,\cS)$.
\end{theorem}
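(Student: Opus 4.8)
The plan is to deduce this from the effective multiple-mixing estimate of \cite[Theorem 1.1]{BEG}, whose proof I sketch below. A routine reduction first disposes of the constant part of $\varphi$: writing $\varphi = c\cdot 1 + \psi$ with $c = \int_{X_d}\varphi\,d\mu_d$ and $\int_{X_d}\psi\,d\mu_d = 0$, expanding $\prod_{h\in F}\varphi\circ h$ over subsets $S\subseteq F$, noting that the terms with $|S|\le 1$ sum to $(\int\varphi)^{|F|}$ (those with $|S| = 1$ vanish by $H_d$-invariance of $\mu_d$), and using $|c|\le\cS_q(\varphi)$ together with $\cS_q(\psi)\le 2\cS_q(\varphi)$, one reduces Theorem \ref{Thm_BEG} to the bound
\[
\Big|\int_{X_d}\prod_{h\in S}\psi(h.x)\,d\mu_d(x)\Big|\ \ll_{|S|}\ e^{-\delta_{|S|}\,v_d(S)}\,\cS_{q_{|S|}}(\psi)^{|S|}
\]
for every mean-zero $\psi\in\cA$ and every finite $S\subseteq H_d$ with $|S|\ge 2$; combined with $v_d(S)\ge v_d(F)$ for $S\subseteq F$ and the monotonicity of $q\mapsto\cS_q$, this yields the estimate of Definition \ref{Def_Wequi} with $\delta_r = \min_{2\le s\le r}\delta_s$ and $q_r = \max_{2\le s\le r}q_s$.

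The base case $|S| = 2$ is effective $2$-mixing. Since $L^2_0(X_d)$ carries an $\SL_d(\bR)$-representation with a spectral gap --- property (T) for $d\ge 3$, and a spectral gap for $\SL_2(\bR)/\SL_2(\bZ)$ (e.g. Selberg's bound) for $d = 2$ --- matrix coefficients of Sobolev-smooth vectors decay exponentially in the Cartan projection of the translating element; as $\dist_d(h,h')$ is comparable to that Cartan projection for $h,h'\in H_d$, and $\cS_q$ dominates the order-$q$ $L^2(\mu_d)$-Sobolev norm on $X_d$, one obtains $\big|\int_{X_d}\psi(h.x)\overline{\psi(h'.x)}\,d\mu_d\big|\ll e^{-\kappa\dist_d(h,h')}\cS_{q_0}(\psi)^2$ for suitable $\kappa>0$ and $q_0\ge 1$.

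For $|S| = r\ge 3$ I would induct on $r$. After $x\mapsto h_r^{-1}.x$, for an enumeration $S = \{h_1,\dots,h_r\}$ still to be chosen, the integral becomes $\int_{X_d}\big(\prod_{i=1}^{r-1}\psi(g_i.x)\big)\,\psi(x)\,d\mu_d$ with $g_i = h_ih_r^{-1}$, each at $\dist_d$-distance at least $v_d(S)$ from the identity. One then peels off the factor $\psi(g_{r-1}.x)$ by effective $2$-mixing under $g_{r-1}$ against the remainder $\tilde\Psi = \psi\cdot\prod_{i\le r-2}\psi\circ g_i$, handling the mean $\int\tilde\Psi = O(e^{-\delta_{r-2}v_d(S)})$ by the inductive hypothesis. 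The obstruction is that $\tilde\Psi$, being a product of translates, is rough; this is met with two ingredients. First, a directional sharpening of effective $2$-mixing that only demands Sobolev regularity of $\tilde\Psi$ along root directions \emph{contracted} by $g_{r-1}$. Second, a pigeonhole choice of the enumeration of $S$ --- using that the exponent vectors of its elements lie in $\{\sum_k s_k = 0\}\subset\bR^d$ and are pairwise $\ell^\infty$-separated by at least $v_d(S)$ --- arranging that some root direction contracted by $g_{r-1}$ with rate $\gtrsim v_d(S)$ is non-expanding under all of $g_1,\dots,g_{r-2}$; one then averages $\tilde\Psi$ over a short arc (length $e^{-v_d(S)/2}$) in that direction, which leaves the integral unchanged by $\mu_d$-invariance and costs only a factor polynomial in $e^{v_d(S)/2}$ (controlled, via the convolution structure, by the good-pair estimate $\cS_q(\rho*\chi_A)\ll_q\|\rho\|_{C^q}$), after which the $2$-mixing gain $e^{-\kappa v_d(S)}$ strictly dominates. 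Iterating over $r$ and tracking constants produces admissible $\delta_r$ and $q_r$.

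The main obstacle is exactly this last point: products of translates are genuinely rough in the unstable directions, so the whole scheme rests on combining a directional form of effective $2$-mixing with a combinatorial choice of the order in which factors are peeled --- so that one contracting direction serves all remaining elements simultaneously --- and on the quantitative bookkeeping that keeps the arc-smoothing and mean-value errors exponentially below the $2$-mixing gain, uniformly in $r$. The case $d = 2$ admits no higher-rank simplification and must be run through the same scheme, leaning on the full strength of exponential $2$-mixing for the geodesic flow on $\SL_2(\bR)/\SL_2(\bZ)$.
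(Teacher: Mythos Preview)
The paper does not prove Theorem \ref{Thm_BEG}; it simply quotes \cite[Theorem 1.1]{BEG} and adds the remark that the norms $\cS_q$ satisfy the axioms (N1--N4) of \cite[Subsection 2.2]{BEG}, which is all that is needed to invoke that result. So there is no ``paper's own proof'' to compare your sketch against --- you have gone well beyond what the paper does.

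That said, your outline is a fair summary of the strategy of \cite{BEG}: the reduction to mean-zero $\psi$ is correct, the $|S|=2$ base case via the spectral gap is standard, and the inductive scheme of peeling one factor by a directional effective $2$-mixing estimate, after smoothing along a suitably chosen contracting root, is indeed the mechanism. Two places where the sketch is thin: (i) the ``pigeonhole choice of enumeration'' arranging a single root direction that is simultaneously contracted by $g_{r-1}$ at rate $\gtrsim v_d(S)$ and non-expanding under all remaining $g_i$ is the genuinely delicate combinatorial core of \cite{BEG}, and a bare appeal to pigeonhole does not obviously produce it for general $d$; (ii) the cost of the arc-smoothing is not governed by the good-pair bound $\cS_q(\rho*\chi_A)\ll_q\|\rho\|_{C^q}$ --- that estimate concerns mollified indicators on $X_d$, whereas here one needs control of Sobolev norms of products of translates under short one-parameter averaging, which is a different (though elementary) computation. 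Neither point is fatal to the plan, but both deserve more than a phrase if you intend this as a self-contained argument rather than a pointer to \cite{BEG}.
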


\begin{remark}
Theorem \ref{Thm_BEG} is not explicitly stated in \cite{BEG} with the norms $(\cS_q)$ above.  However,  these norms satisfy the properties (N1-N4) listed in \cite[Subsection 2.2]{BEG}; this is enough to guarantee $\cW$-mixing.
\end{remark}

\subsection{$\cW$-equidistribution of $\nu_{\bd}$ to $\mu_d$ along $H^{+}_{\bd}$}

Let $\bd = (d_1,d_2)$ such that $d = d_1 + d_2$,  and let $H^{+}_{\bd} < \SL_d(\bR)$ be as in Subsection \ref{subsec:Poissonud}.  If $F \subset H_d^{+}$ is a finite set with
$r$ elements,  we define $w_r(F) := v_{\bd}^{+}(F)$,  where $v_{\bd}^{+}$ is defined as in \eqref{def_vdp}.  The following theorem was proved by the authors 
\cite[Theorem 1.1]{BG2}.  

\begin{theorem}
\label{Thm_BG}
For every $\bd = (d_1,d_2)$ such that $d = d_1 + d_2 \geq 2$,  $\nu_{\bd}$ $\cW$-equidistributes to $\mu_d$ along $H_{\bd}^{+}$.
\end{theorem}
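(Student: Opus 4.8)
The plan is to deduce Theorem \ref{Thm_BG} from the $\cW$-mixing of $\mu_d$ (Theorem \ref{Thm_BEG}) by a mollification argument. First I would unwind $\nu_{\bd}$: writing $u(\alpha)$ for the unipotent matrix with block $\alpha$ in the upper-right corner, one has $\Lambda_\alpha = u(\alpha)\bZ^d$, so $\nu_{\bd}$ is the pushforward of the Haar probability measure of the compact torus orbit $U.\bZ^d \cong (\bR/\bZ)^{d_1 d_2}$, where $U = \{u(\alpha) : \alpha \in \Mat_{d_1,d_2}(\bR)\}$. The structural point is that $\mathscr{T}_{\bd}^{+}$ and $\lfloor \cdot \rfloor_{\bd}$ are tailored so that $U$ lies in the expanding horospherical subgroup of \emph{every} $h(T)$ with $T \in \mathscr{T}_{\bd}^{+}$, with uniform expansion: $\Ad(h(T))$ scales each $U$-direction $E_{lj}$ ($l \leq d_1 < j$) by $T_l/T_j \geq e^{2\lfloor T\rfloor_{\bd}}$.

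Now fix $F = \{h(T^{(1)}),\ldots,h(T^{(r)})\} \subset H^{+}_{\bd}$ and $\varphi \in \cA$; the task is to estimate $\int_{X_d} \prod_{i} \varphi \circ h(T^{(i)}) \, d\nu_{\bd}$. I would mollify: convolve $\nu_{\bd}$ with a non-negative bump $\rho_\eta$ supported in an $\eta$-neighbourhood of the identity inside the directions transverse to $U$, obtaining a smooth probability measure $\theta_\eta = \Phi_\eta \, d\mu_d$ with $\cS_q(\Phi_\eta) \ll_q \eta^{-\kappa_q}$. Because each $h(T^{(i)})$ is (weakly) expanding along $U$, the local Lipschitz continuity encoded in $\cS_q$ gives $\big| \int \prod_i \varphi \circ h(T^{(i)}) \, d\nu_{\bd} - \int \prod_i \varphi \circ h(T^{(i)}) \, d\theta_\eta \big| \ll_r \eta \, \cS_q(\varphi)^r$. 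Since $\int \prod_i \varphi \circ h(T^{(i)}) \, d\theta_\eta = \int \big(\prod_i \varphi \circ h(T^{(i)})\big) \Phi_\eta \, d\mu_d$, applying the $\cW$-mixing of $\mu_d$ in its natural multilinear form (with $\Phi_\eta$ evaluated at the identity and $\varphi$ at the $h(T^{(i)})$) bounds the right-hand side by $\big(\int \varphi \, d\mu_d\big)^r + O_r\big( e^{-\delta_{r+1} v^{+}_{\bd}(F)} \max(\cS_{q_{r+1}}(\varphi), \eta^{-\kappa})^{r+1}\big)$; here one uses $\dist_d(h(T^{(i)}),\id) = \|\log T^{(i)}\|_\infty \geq \lfloor T^{(i)}\rfloor_{\bd}$, whence $v_d(F \cup \{\id\}) \geq v^{+}_{\bd}(F)$. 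Choosing $\eta$ to be a fixed small power of $e^{-v^{+}_{\bd}(F)}$ (and noting the claim is vacuous unless $\cS_q(\varphi)$ is sub-exponential in $v^{+}_{\bd}(F)$) balances the two error terms and gives the required bound $\ll_r e^{-\delta_r v^{+}_{\bd}(F)} \cS_{q_r}(\varphi)^r$.

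The hard part is carrying out the mollification step uniformly in $F$. Two issues arise. The milder one is the non-compactness of $X_d$: one must ensure the error bounds do not degrade when the relevant lattices approach the cusp, which is handled using that the $\varphi$ of interest have compact support (and, in the applications, the associated sets have compact complement), together with standard non-divergence estimates for the unipotent and diagonal flows. The serious one is that $H^{+}_{\bd}$ sits inside the \emph{full} diagonal torus, which is strictly larger than the centre of the Levi factor of the parabolic cut out by $(d_1,d_2)$; consequently the neutral leaf of the flow $h(T)$ — notably the off-diagonal directions of the Levi $S(\GL_{d_1} \times \GL_{d_2})$ — varies with $T$ and is not uniformly contracted, so a naive transverse mollification at a single scale $\eta$ is not Lipschitz-stable. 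I expect this to be the technical heart of the matter: one must replace the uniform mollification by a Bruhat-adapted, direction-by-direction one (and/or argue by induction on the block structure), interlocked with the sharp combinatorial control of exponents that drives the quantitative multiple mixing of \cite{BEG}. With that refinement in place the parameter optimization of the previous paragraph goes through, and one obtains the asserted $\cW$-equidistribution, as carried out in \cite[Theorem 1.1]{BG2}.
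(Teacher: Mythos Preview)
The paper does not prove Theorem \ref{Thm_BG}; it simply quotes it from the companion paper \cite[Theorem 1.1]{BG2}. Your proposal goes further and sketches the expected argument --- a horospherical ``thickening'' to reduce equidistribution of $\nu_{\bd}$ to the $\cW$-mixing of $\mu_d$ (Theorem \ref{Thm_BEG}) --- and correctly identifies the main obstacle, namely that the Levi directions of the $(d_1,d_2)$ parabolic are not uniformly contracted by the various $h(T^{(i)})$. Two comments are in order. First, the $O(\eta)$ bound for the mollification error in your second paragraph already \emph{presupposes} the refinement you only describe in the third: if you mollify $\nu_{\bd}$ in all directions transverse to $U$ at a single scale $\eta$, then conjugating by $h(T^{(i)})$ expands some of those Levi directions, and the naive Lipschitz bound is not $O(\eta)$ but $O(\eta \cdot e^{C\|\log T^{(i)}\|_\infty})$, which is useless. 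So the structure of the argument is not ``first do the easy mollification, then refine''; the refinement is needed from the outset. Second, you invoke $\cW$-mixing ``in its natural multilinear form'', with $\Phi_\eta$ at the identity and $\varphi$ at the other points; this is indeed what \cite{BEG} proves, but note that Definition \ref{Def_Wequi} in the present paper is phrased only for a single test function, so strictly speaking you are appealing to a stronger statement than the one recorded here. Modulo these points, your outline matches the strategy of \cite{BG2}, and your observation that $v_d(F \cup \{\id\}) \geq v_{\bd}^{+}(F)$ is exactly why the spread $v_{\bd}^{+}$ is defined as it is.
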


\begin{remark}
Theorem \ref{Thm_BG} for one-parameter sub-semigroups of $H_d^{+}$ was proved earlier by the authors in \cite[Theorem 2.2]{BG1}.
\end{remark}

\subsection{Proof of Theorem \ref{Thm_Bn}}

Let $d \geq 3$,  and let $(F_n)$ be a sequence of finite subsets of $H_d$ such that $|F_n| \ra \infty$
and \emph{either}
\vspace{0.1cm}
\begin{itemize}
\item[(1)] $\displaystyle \lim_{n \ra \infty} \frac{v_d(F_n)}{\log |F_n|} = \infty$,  or \vspace{0.1cm}
\item[(2)] $F_n \subset H_{\bd}^{+}$ and $\displaystyle \lim_{n \ra \infty} \frac{v^{+}_{\bd}(F_n)}{\log |F_n|} = \infty$.
\end{itemize}
Theorem \ref{Thm_BEG} tells us that we are in the setting of Theorem \ref{Thm_An},  and (1) implies that the first assumption of Theorem \ref{Thm_An} (about $\widetilde{w}_r$) is satisfied.  Theorem \ref{Thm_BG},  in combination with (2),  says the same thing.  
We will check that the other two conditions (i) and (ii) (with $m = \xi/2\zeta(d))$ in Theorem \ref{Thm_An} are satisfied for 
\[
V_n := W_{\eps_n} \qand \rho_n := \widetilde{\rho}_{\eps_n}
\qand
A_n := \Omega_{B_n} \qand A_n^{\pm} := \Omega_{B_n^{\pm}},
\]
where $(\eps_n)$,  $(B_n)$ and $(B_n^{\pm})$ are as in the statement of Theorem \ref{Thm_Bn}.  If these conditions hold,  then Theorem \ref{Thm_An} implies that 
\vspace{0.1cm}
\begin{itemize}
\item $N_n \xLongrightarrow[\mu_d]{} \Poi\big(\xi/2\zeta(d)\big)$ - in the case of (1).  
\item $N_n \xLongrightarrow[\nu_{\bd}]{} \Poi\big(\xi/2\zeta(d)\big)$ - in the case of (2).  
\end{itemize}

\subsubsection*{Proof of (i) in Theorem \ref{Thm_An}}
Since $\eps_n \gg |F_n|^{-\sigma}$ for some $\sigma > 0$,  we conclude from Lemma \ref{Lemma_rhoeps} that
\[
\|\rho_n\|_{C^q} \ll \eps_n^{-\kappa_q} \ll |F_n|^{\sigma \kappa_q},  \quad \textrm{for all $q \geq 1$},
\]
where the implicit constants only depend on $q$. This proves (i) with $\sigma_q := \sigma \kappa_q$. \\

\subsubsection*{Proof of (ii) in Theorem \ref{Thm_An}}

The inclusions are obvious.  To prove that
\[
\lim_{n \ra \infty} |F_n| \mu_d(A_n^{\pm}) = \frac{\xi}{2\zeta(d)},
\]
we argue as follows.  First note that
\begin{equation}
\label{FnAn}
|F_n| \cdot \mu_d(A_n^{\pm})
= 
|F_n| \cdot \Big( \mu_d(\Omega_{B_n^{\pm}}) - \frac{\Vol_d(B_n^{\pm})}{2\zeta(d)} \Big) +
|F_n| \cdot \frac{\Vol_d(B_n^{\pm})}{2\zeta(d)}.
\end{equation}
By our assumption in Theorem \ref{Thm_Bn},  the second term converges to 
$\xi/2\zeta(d)$ when $n \ra \infty$.  In particular,  $\Vol_d(B_n^{\pm}) \ra 0$.  By Lemma \ref{LemmaB},  the absolute value of the first term is bounded from above by
\[
|F_n| \cdot \Big( \frac{\Vol_d(B_{n}^{\pm})}{2\zeta(d)}\Big)^2 = 
\Big( |F_n| \cdot \frac{\Vol_d(B_{n}^{\pm})}{2\zeta(d)}\Big) \cdot \frac{\Vol_d(B_{n}^{\pm})}{2\zeta(d)},
\]
Since the first factor on the right hand side stays bounded as $n \ra \infty$ while the second factor tends to zero,  we conclude that the first term in \eqref{FnAn} tends to
zero as $n \ra \infty$.  Hence,  $\lim_n |F_n| \,  \mu(A_n^{\pm}) = \xi/2\zeta(d)$,  which finishes the proof of (ii).

\appendix\section{Weibull asymptotics implies a logarithm Law}

Let $(X,\mu)$ be a probability space and let $f_T$,  with $T\in \mathscr{T}_{d}$,  be positive measurable functions on $X$.  We \emph{assume} that the minima of these functions are Weibull distributed,  i.e.  we suppose that
there exist $a,c >0$, $b\ge 0$ such that
given any sequence of finite subsets $\Delta_n$ of $\mathscr{T}_{d}$
satisfying 
\begin{equation}
\label{eq:sparse}
|\Delta_n|\to\infty\quad\hbox{and}\quad
 \frac{\displaystyle \min_{T\ne T'\in\Delta_n} \max_{p} |\log T_p - \log T'_p|}{\log |\Delta_n|} \to \infty,
\end{equation}
the minima 
\[
\mathfrak{M}_{\Delta_n} := \min \big\{ f_{T} :\, T\in \Delta_n\big\}
\]	
satisfy
\[
|\Delta_n|^a(\log |\Delta_n|)^{b}\cdot \mathfrak{M}_{\Delta_n} \xLongrightarrow[\mu]{}
\Wei\big(c,a\big).
\]

\begin{proposition}\label{p:log1}
Under the above assumption, for every $\delta<d-1$ and $\mu$-a.e. $x\in X$,
$$
\liminf_{\|T\|_\infty \to\infty}\, (\log \|T\|_\infty )^{a\delta} f_T(x)=0.
$$
\end{proposition}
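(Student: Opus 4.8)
The plan is to derive this a.e.\ statement from the distributional hypothesis by a direct measure-theoretic argument: I will exhibit, for each large $n$, an explicit sparse finite set $\Delta_n\subset\mathscr{T}_d$ all of whose members have $\ell^\infty$-norm of size $\approx e^{n}$, chosen so large (within the constraint \eqref{eq:sparse}) that the Weibull normalisation $\lambda_n$ of $\mathfrak{M}_{\Delta_n}$ grows much faster than $(\log\|T\|_\infty)^{a\delta}$ on $\Delta_n$. The hypothesis will then force $\mathfrak{M}_{\Delta_n}$ below the threshold $\frac{\eta/2}{(2\log R_n)^{a\delta}}$ with probability $\to 1$, and Fatou's lemma applied to the corresponding $\liminf$ of sets gives the claim; no Borel--Cantelli-type summation is required.

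First I would reduce to $0<\delta<d-1$ (for $\delta\le 0$ the statement follows from that for any positive $\delta'<d-1$, since $(\log\|T\|_\infty)^{a\delta}\le(\log\|T\|_\infty)^{a\delta'}$ once $\|T\|_\infty$ is large). Then I fix $\alpha\in(0,1)$ with $(1-\alpha)(d-1)>\delta$ and, for large $n$, take $v_n=\lfloor n^{\alpha}\rfloor$, $K_n=\lfloor n^{1-\alpha}\rfloor$, $R_n=e^{n}$, and let $\Delta_n$ be the set of $T\in\mathscr{T}_d$ with $\log T_i=n+v_nw_i$ for $1\le i\le d-1$ and $\log T_d=-\sum_{i<d}\log T_i$, where $w$ runs over $\{0,1,\dots,K_n\}^{d-1}$. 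I would then check the three required facts: $|\Delta_n|=(K_n+1)^{d-1}\asymp n^{(1-\alpha)(d-1)}\to\infty$; two distinct members differ by at least $v_n$ in some coordinate $\le d-1$, so the spread equals $v_n$ and $v_n/\log|\Delta_n|\asymp n^{\alpha}/\log n\to\infty$, i.e.\ \eqref{eq:sparse} holds; and, since $T_d<1<T_i$ for $i<d$ and $v_nK_n\le n$, every $T\in\Delta_n$ has $\log\|T\|_\infty=n+v_n\max_i w_i\in[n,2n]=[\log R_n,2\log R_n]$, in particular $\|T\|_\infty\ge R_n$.

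Next, applying the standing hypothesis to this $(\Delta_n)$ yields $\lambda_n\mathfrak{M}_{\Delta_n}\xLongrightarrow[\mu]{}\Wei(c,a)$ with $\lambda_n=|\Delta_n|^a(\log|\Delta_n|)^b$. Because $(1-\alpha)(d-1)>\delta$ and $a>0$,
\[
\frac{\lambda_n}{(\log R_n)^{a\delta}}\;\asymp\; n^{a\left((1-\alpha)(d-1)-\delta\right)}(\log n)^b\;\longrightarrow\;\infty .
\]
Fixing $\eta>0$ and setting $s_n=\frac{\eta/2}{(2\log R_n)^{a\delta}}$ and $E_n=\{\mathfrak{M}_{\Delta_n}\ge s_n\}$, one has $\lambda_ns_n\to\infty$, and since $\mu(\lambda_n\mathfrak{M}_{\Delta_n}\ge t)\to e^{-(ct)^a}$ for each $t>0$ with the right-hand side vanishing as $t\to\infty$, it follows that $\mu(E_n)\to 0$. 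To finish, I observe that if $x$ satisfies $\liminf_{\|T\|_\infty\to\infty}(\log\|T\|_\infty)^{a\delta}f_T(x)\ge\eta$, then there is $R_0$ with $(\log\|T\|_\infty)^{a\delta}f_T(x)\ge\eta/2$ for all $\|T\|_\infty\ge R_0$; hence, as soon as $R_n\ge R_0$, every $T\in\Delta_n$ has $f_T(x)\ge\frac{\eta/2}{(\log\|T\|_\infty)^{a\delta}}\ge\frac{\eta/2}{(2\log R_n)^{a\delta}}=s_n$, so $\mathfrak{M}_{\Delta_n}(x)\ge s_n$, i.e.\ $x\in E_n$ for all large $n$. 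Thus $\{x:\liminf_{\|T\|_\infty\to\infty}(\log\|T\|_\infty)^{a\delta}f_T(x)\ge\eta\}\subset\liminf_nE_n$, a set of measure $\le\liminf_n\mu(E_n)=0$ by Fatou's lemma. Letting $\eta\downarrow 0$ along $\eta=1/m$ then gives $\liminf_{\|T\|_\infty\to\infty}(\log\|T\|_\infty)^{a\delta}f_T(x)=0$ for $\mu$-a.e.\ $x$.

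The crux is getting the inequality the right way round: one needs $\mathfrak{M}_{\Delta_n}$ to fall \emph{below} the threshold with high probability, so $(\log R_n)^{-a\delta}$ must dominate the intrinsic scale $\lambda_n^{-1}$, which forces $\Delta_n$ to be taken as large as the sparseness constraint permits; and the count $|\Delta_n|\asymp(n/v_n)^{d-1}$ must then produce an exponent large enough, which is exactly why $d-1$ is the critical value. I expect the only real work to be the bookkeeping that calibrates $\alpha$, $v_n$ and $R_n$ simultaneously against \eqref{eq:sparse} and against the divergence $\lambda_n/(\log R_n)^{a\delta}\to\infty$; there is no substantive obstacle beyond this.
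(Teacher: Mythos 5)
Your proposal is correct and follows essentially the same strategy as the paper's proof: choose $\Delta_n$ as large as the lacunarity constraint \eqref{eq:sparse} permits, so that the Weibull scale $|\Delta_n|^a(\log|\Delta_n|)^b$ outgrows $(\log\|T\|_\infty)^{a\delta}$ (precisely because $\delta<d-1$), and let the distributional limit push the minima below the threshold. The only cosmetic differences are that the paper argues by contradiction and uses a slightly different explicit parametrization of $\Delta_n$ (via $T(s)=(\theta_n^{s_1},\ldots,\theta_n^{s_{d-1}},\theta_n^{-\sum_k s_k})$ with $\theta_n=\ell_n^{\log\ell_n}$), while you argue directly through $\mu(E_n)\to 0$ and Fatou.
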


In particular, it follows from Theorem \ref{Th_M} that the Minkowski constants \eqref{eq:cB} satisfy
$$
\liminf_{\|T\|_\infty \to\infty}\, (\log \|T\|_\infty )^{\frac{d-1}{d}-\varepsilon} c_{B_T}(\Lambda)=0 \quad \hbox{for all $\varepsilon>0$ and $\mu_d$-a.e. $\Lambda\in X_d$}
$$
 (cf. \eqref{eq:Mink}). Also from Theorem \ref{Th_PLF} we obtain that
$$
\liminf_{\|T\|_\infty \to\infty}\, (\log \|T\|_\infty )^{d-1-\varepsilon} m_{B_T}(\Lambda)=0 \quad \hbox{for all $\varepsilon>0$ and $\mu_d$-a.e. $\Lambda\in X_d$}.
$$
In particular, there exist $v_i\in \Lambda$ such that 
$$
\lim_{i\to\infty}\, (\log \|v_i\|_\infty )^{d-1-\varepsilon} \Pi(v_i)=0
$$
(cf. \eqref{eq:prod}).

\begin{proof}[Proof of Proposition \ref{p:log1}]
For the sake of a contradiction,  we assume that there exist $c_1,c_2>0$ such that
$$
\mu\big( \{x\in X:\, {\liminf}_{\|T\|_\infty \to\infty}\, (\log \|T\|_\infty)^{a\delta} f_T(x) > c_1  \}\big)> c_2.
$$
For a finite subset $\Delta \subset \mathscr{T}_{d}$, we write 
\begin{align*}
\rho(\Delta)  := \max \{ (\log \|T\|_\infty )^{a\delta}:\, T\in\Delta \}\quad\hbox{and}\quad
\mathfrak{M}_\Delta(x)  := \min \{ f_T(x):\, T\in\Delta \}.
\end{align*}
We observe that 
$$
\liminf_{\|T\|_\infty \to\infty}\, (\log \|T\|_\infty)^{a\delta} f_T(x) > c_1
$$
means that there exists $T_0(x)$ such that 
$$
(\log \|T\|_\infty)^{a\delta} f_T(x) > c_1\quad\hbox{when $\|T\|_\infty\ge T_0(x)$.}
$$
In particular, when $\|T\|_\infty\ge T_0(x)$ for all $T\in \Delta$,
$$
\rho(\Delta) \mathfrak{M}_\Delta(x)\ge \min\{(\log \|T\|_\infty)^{a\delta} f_T(x):\, T\in \Delta \}>c_1.
$$
Let us pick a sequence of finite subsets $\Delta_n$ such that 
\begin{equation} \label{eq:cond2}
\min\{\|T\|_\infty:\,  T\in \Delta_n\}\to \infty\quad\hbox{ as $n\to\infty$.}
\end{equation}
Then the set
$$
\{x\in X:\, \exists T_0(x):\, (\log \|T\|_\infty)^{a\delta} f_T(x) > c_1 \hbox{ when $\|T\|_\infty\ge T_0(x)$} \}
$$
is contained in 
$$
\{x\in X:\, \exists n_0(x):\, \rho(\Delta_n) \mathfrak{M}_{\Delta_n} (x) > c_1 \hbox{ for all $n\ge n_0(x)$} \}.
$$
In particular, we conclude that 
$$
\mu\big(\{x\in X:\, \exists n_0(x):\, \rho(\Delta_n) \mathfrak{M}_{\Delta_n} (x) > c_1 \hbox{ for all $n\ge n_0(x)$} \}\big) >c_2.
$$
Setting 
$$
\Omega_n:=\{x\in X:\, \rho(\Delta_n) \mathfrak{M}_{\Delta_n} (x) > c_1\},
$$
we conclude that $\mu(\liminf \Omega_n)>c_2$. This implies that
\begin{equation}
\label{eq:contr}
\mu(\Omega_n)>c_2\quad\hbox{ for all sufficiently large $n$.}
\end{equation}
Let us for now additionally assume that the sets $\Delta_n$ satisfy \eqref{eq:sparse} and 
\begin{equation} \label{eq:cond3}
\frac{|\Delta_n|^a (\log|\Delta_n|)^b}{\rho(\Delta_n)}\to \infty.
\end{equation}
Below we show that such sequences of sets really do exist.  \\

We choose $u\in \bR$ so that $e^{-(\lambda u)^a}<c_2$.
Since $c_1\frac{|\Delta_n|^a (\log|\Delta_n|)^b}{\rho(\Delta_n)}>u$ 
for sufficiently large $n$, we deduce that  
$$
\mu(\Omega_n)\le 
\mu\big(\{x\in X:\, |\Delta_n|^a (\log|\Delta_n|)^b\, \mathfrak{M}_{\Delta_n} (x) > u \}\big)
\to e^{-(\lambda u)^a}<c_2.
$$
However, this contradicts \eqref{eq:contr}. Therefore, we conclude that 
for all $c_1>0$,
$$
\mu\big( \{x\in X:\, {\liminf}_{\|T\|_\infty \to\infty}\, (\log \|T\|_\infty)^{a\delta} f_T(x) > c_1\}\big)= 0.
$$
This implies the proposition,  provided that we can show that we can construct $(\Delta_n)$ satisfying
 the conditions \eqref{eq:sparse}, \eqref{eq:cond2}, \eqref{eq:cond3}.  For this we use parameters $\ell_n,\theta_n\to \infty$. Let
$$
\Delta_n:=\big\{T(s_1,\ldots, s_{d-1}):\, 1\le s_1,\ldots,s_{d-1}\le \ell_n\big\},
$$
where
$$
T(s_1,\ldots, s_{d-1}):=\big(\theta_n^{s_1},\ldots,\theta_n^{s_{d-1}}, \theta_n^{-\sum_k s_k} \big).
$$
Then $|\Delta_n|=\ell_n^{d-1}\to \infty$, and the condition \eqref{eq:sparse} is equivalent
 to $\frac{\log \theta_n}{\log \ell_n}\to \infty$, so that 
$\theta_n=\ell_n^{\omega_n}$ with $\omega_n\to\infty$.
The condition \eqref{eq:cond2} is obviously satisfied,
and the condition \eqref{eq:cond3} holds when 
$$
\frac{\ell_n^{a(d-1)}}{\log(\theta_n^{\ell_n})^{a\delta}}=\frac{\ell_n^{a(d-1)- a\delta }}{(\omega_n\log\ell_n)^{a\delta}} \to\infty.
$$
Since $\delta<d-1$, this can be arranged by taking for instance $\omega_n=\log \ell_n$.
\end{proof}	

An analogue of Proposition \ref{p:log1} also holds in the setting of Theorems \ref{Th_Dirichlet} and \ref{Th_Gal}.
We fix a tuple $\bd = (d_1,d_2)$
such that $d = d_1 + d_2$, $d_1,d_2\ge 1$ and consider the sub-semigroup $\mathscr{T}^{+}_{\bd}$ of $\mathscr{T}_{d}$ defined in \eqref{eq:TT}.
Let $\mathscr{C}$ be a cone in $\mathscr{T}_{\bd}^{+}$ with finitely many faces. 
We consider positive measurable functions $f_T$, $T\in \mathscr{C}$, on $X$ 
and assume that the minima of these functions satisfy the Weibull distribution. Namely, we suppose that
there exist $a>0$, $b\ge 0$ such that
given any sequence of finite subsets $\Delta_n$ of $\mathscr{C}$
satisfying $|\Delta_n|\to\infty$ and 
\begin{equation}
\label{eq:sparse2}
 \frac{\displaystyle \min_{T\ne T'\in\Delta_n} \max_{p} |\log T_p - \log T'_p|}{\log |\Delta_n|} \to \infty
\quad \hbox{and} \quad
 \frac{\displaystyle \min_{T\in\Delta_n} \lfloor T\rfloor_{\bd}}{\log |\Delta_n|} \to \infty
\end{equation}
the minima 
\[
\mathfrak{M}_{\Delta_n} := \min \big\{ f_{T} :\, T\in \Delta_n\big\}
\]	
satisfy
\[
|\Delta_n|^a(\log |\Delta_n|)^{b}\cdot \mathfrak{M}_{\Delta_n} \xLongrightarrow[\mu]{}
\Wei\big(c,a\big).
\]

\begin{proposition}\label{p:log2}
	Under the above assumption, if $d_0=\dim (\mathscr{C})$,
	for every $\delta<d_0$ and $\mu$-a.e. $x\in X$,
	$$
	\liminf_{\|T\|_\infty \to\infty}\, (\log \|T\|_\infty )^{a\delta} f_T(x)=0.
	$$
\end{proposition}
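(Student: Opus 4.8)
The plan is to mimic the proof of Proposition \ref{p:log1} essentially verbatim; the only part that genuinely changes is the construction of the auxiliary sequence of finite sets, which must now live inside the $d_0$-dimensional cone $\mathscr{C}$ rather than inside all of $\mathscr{T}_d$. First I would run the contradiction argument unchanged: assume that $\mu\big(\{x:\liminf_{\|T\|_\infty\to\infty}(\log\|T\|_\infty)^{a\delta}f_T(x)>c_1\}\big)>c_2$ for some $c_1,c_2>0$, set $\rho(\Delta)=\max\{(\log\|T\|_\infty)^{a\delta}:T\in\Delta\}$ and $\mathfrak{M}_\Delta(x)=\min\{f_T(x):T\in\Delta\}$, and observe exactly as in the proof of Proposition \ref{p:log1} that for \emph{any} sequence of finite sets $\Delta_n\subset\mathscr{C}$ with $\min\{\|T\|_\infty:T\in\Delta_n\}\to\infty$ the sets $\Omega_n=\{x:\rho(\Delta_n)\mathfrak{M}_{\Delta_n}(x)>c_1\}$ satisfy $\mu(\Omega_n)>c_2$ for all large $n$. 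If one can in addition arrange that $\Delta_n$ satisfies \eqref{eq:sparse2} together with
\[
\frac{|\Delta_n|^{a}(\log|\Delta_n|)^{b}}{\rho(\Delta_n)}\longrightarrow\infty ,
\]
then picking $u$ with $e^{-(cu)^a}<c_2$ and invoking the assumed Weibull convergence gives $\mu(\Omega_n)\le\mu\big(\{x:|\Delta_n|^{a}(\log|\Delta_n|)^{b}\mathfrak{M}_{\Delta_n}(x)>u\}\big)\to e^{-(cu)^a}<c_2$, a contradiction; hence the $\liminf$ vanishes $\mu$-a.e. So everything reduces to producing $\Delta_n\subset\mathscr{C}$ with $|\Delta_n|\to\infty$, with \eqref{eq:sparse2}, with $\min\{\|T\|_\infty:T\in\Delta_n\}\to\infty$, and with the displayed ratio tending to infinity.

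For the construction I would pass to logarithmic coordinates via the coordinatewise exponential $E:V\to\mathscr{T}_d$ on $V=\{y\in\bR^d:\sum_i y_i=0\}$, so that $\widetilde{\mathscr{C}}:=E^{-1}(\mathscr{C})$ is a polyhedral cone of dimension $d_0$ in $V$, $\log\|E(y)\|_\infty=\max_{i\le d_1}y_i$, and $\lfloor E(y)\rfloor_{\bd}=\min(y_1,\dots,y_{d_1},-y_{d_1+1},\dots,-y_d)$ is continuous and $1$-homogeneous in $y$, with $\log\|E(y)\|_\infty\ge\lfloor E(y)\rfloor_{\bd}$. (Here I would note that $\mathscr{C}$ must meet the interior of $\mathscr{T}_{\bd}^{+}$, since otherwise $\lfloor\cdot\rfloor_{\bd}\equiv0$ on $\mathscr{C}$, no sequence can satisfy \eqref{eq:sparse2}, and the hypothesis is vacuous.) I would then fix a point $w$ in the relative interior of $\widetilde{\mathscr{C}}$ with $\lfloor E(w)\rfloor_{\bd}>0$, an orthonormal basis $u_1,\dots,u_{d_0}$ of the linear span $L$ of $\widetilde{\mathscr{C}}$, and $\eps_0>0$ small enough that $w+\sum_i c_i u_i\in\widetilde{\mathscr{C}}$ for all $|c_i|\le\eps_0$ and that $\lfloor E(y)\rfloor_{\bd}\ge\tfrac12\lfloor E(w)\rfloor_{\bd}$ for all $y$ in the resulting relative $\eps_0$-ball around $w$. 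With $\ell_n\to\infty$, $\gamma_n:=(\log\ell_n)^2$ and $R_n:=\gamma_n\ell_n/\eps_0$, I would take
\[
\Delta_n:=\Big\{\,E\Big(R_n w+\gamma_n\sum_{i=1}^{d_0}s_i u_i\Big)\ :\ s_i\in\{0,1,\dots,\ell_n\}\,\Big\}.
\]
Since $R_n w+\gamma_n\sum_i s_i u_i=R_n\big(w+\sum_i(\gamma_n s_i/R_n)u_i\big)$ with $\gamma_n s_i/R_n\le\eps_0$ and $\mathscr{C}$ is a cone, this lies in $\mathscr{C}$, and $|\Delta_n|=(\ell_n+1)^{d_0}\to\infty$.

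The verification is then mechanical. Distinct parameter tuples give log-points differing by $\gamma_n\sum_i(s_i-s_i')u_i$, whose $\ell^\infty$-norm is at least $c_0\gamma_n$ with $c_0=\min\{\|\sum_i z_i u_i\|_\infty:0\ne z\in\bZ^{d_0}\}>0$; hence the first ratio in \eqref{eq:sparse2} is $\gg\gamma_n/\log\ell_n=\log\ell_n\to\infty$. By $1$-homogeneity each $T\in\Delta_n$ has $\lfloor T\rfloor_{\bd}\ge\tfrac12\lfloor E(w)\rfloor_{\bd}R_n\gg\gamma_n\ell_n$, so the second ratio in \eqref{eq:sparse2} is $\gg\gamma_n\ell_n/\log\ell_n\to\infty$, which also forces $\min\{\|T\|_\infty:T\in\Delta_n\}\to\infty$. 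Finally $\log\|T\|_\infty\le(\max_i w_i+\eps_0)R_n\ll\gamma_n\ell_n$ on $\Delta_n$, so $\rho(\Delta_n)\ll(\gamma_n\ell_n)^{a\delta}$, and therefore
\[
\frac{|\Delta_n|^{a}(\log|\Delta_n|)^{b}}{\rho(\Delta_n)}\ \gg\ \frac{\ell_n^{\,ad_0}(\log\ell_n)^{b}}{(\gamma_n\ell_n)^{a\delta}}\ =\ \ell_n^{\,a(d_0-\delta)}(\log\ell_n)^{\,b-2a\delta}\ \longrightarrow\ \infty,
\]
the last step using $\delta<d_0$. This is precisely where the threshold $d_0=\dim(\mathscr{C})$ enters, exactly as $d-1$ did in Proposition \ref{p:log1}. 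Feeding $(\Delta_n)$ into the contradiction argument finishes the proof.

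I expect the main obstacle to be the construction of $(\Delta_n)$ rather than the probabilistic part: one needs about $\ell_n^{d_0}$ points inside $\mathscr{C}$ that are simultaneously sufficiently lacunary, sufficiently deep inside $\mathscr{T}_{\bd}^{+}$ (the two parts of \eqref{eq:sparse2}), escaping to infinity, and yet with $\rho(\Delta_n)$ small compared with $|\Delta_n|^{a}(\log|\Delta_n|)^{b}$. The two minor subtleties I would need to be careful about are (a) that the grid genuinely lies in $\mathscr{C}$, which is handled by working in the relative interior of $\widetilde{\mathscr{C}}$ and using homogeneity of the cone, and (b) that $\lfloor\cdot\rfloor_{\bd}$ grows linearly along the grid, which is handled by choosing $w$ with $\lfloor E(w)\rfloor_{\bd}>0$ and using continuity together with $1$-homogeneity. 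Everything upstream of the construction is identical to the proof of Proposition \ref{p:log1}.
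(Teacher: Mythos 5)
Your proposal is correct and takes essentially the same approach as the paper's proof: run the contradiction argument of Proposition \ref{p:log1} verbatim, then reduce everything to constructing a $d_0$-dimensional grid of points in $\mathscr{C}$ (in logarithmic coordinates) satisfying \eqref{eq:sparse2}, escape to infinity, and the ratio condition. The paper produces such a grid from $d_0$ multiplicatively independent elements of $\mathscr{C}$ raised to powers $1\le s_k\le\ell_n$ with base scale $\theta_n=\ell_n^{\omega_n}$, $\omega_n=\log\ell_n$ (the printed ``$\theta_n=\log\ell_n$'' is an evident typo), whereas you anchor the grid at a relative-interior point $w$ and shift along an orthonormal basis of the logarithmic span; the two constructions are morally identical, with yours being somewhat more explicit about why the grid stays in $\mathscr{C}$ and why $\lfloor T\rfloor_{\bd}$ grows.
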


Let us apply this proposition in the setting of Theorem \ref{Th_Dirichlet} with 
$d_1=d-1$ and $d_2=1$ with the cone 
$$
\mathcal{C}=\{(t^{1/(d-1)},\ldots, t^{1/(d-1)}, t^{-1}):\, t\ge 1 \}.
$$
We conclude that 
$$
\liminf_{\|T\|_\infty \to\infty}\, (\log \|T\|_\infty )^{\frac{1}{d-1}-\epsilon} k_T(\alpha)=0.
$$
for every $\varepsilon>0$ and Lebesgue almost every $\alpha\in [0,1]^{d-1}$ (cf. \eqref{DS}).

In the setting of Theorem \ref{Th_Gal} with $d_1=d-1$ and $d_2=1$ with the cone
$$
\mathcal{C}=\{(t^{1/(d-1)}s_1,\ldots, t^{1/(d-1)}s_{d-1}, t^{-1}):\,\, s_1,\ldots, s_{d-1}\ge t^{-1/(d-1)},\, t\ge 1, \, s_1\cdots s_{d-1}=1 \},
$$
we deduce that 
$$
\liminf_{\|T\|_\infty \to\infty}\, (\log \|T\|_\infty )^{d-1-\epsilon} g_T(\alpha)=0.
$$
for every $\varepsilon>0$ and Lebesgue almost every $\alpha\in [0,1]^{d-1}$
(cf.  \eqref{eq:Gal}).

\begin{proof}[Proof of Proposition \ref{p:log2}]
The proof proceeds as in Proposition \ref{p:log1}. We just have to justify existence of a sequence of finite subsets $\Delta_n$  of $\mathcal{C}$ satisfying the required conditions. We use parameters $\ell_n,\theta_n\to \infty$. Since $\dim(\mathcal{C})=d_0$, there exist (multiplicatively) independent 
$$
A_{n,k}=(\theta_n^{w_{1,k}},\ldots,\theta_n^{w_{d,k}}), \quad k=1,\ldots, d_0,
$$
elements in $\mathcal{C}$ (here $w_{l,k}>0$ for $l\le d_1$ and $w_{l,k}>0$ for $l> d_1$).
Let
$$
\Delta_n:=\big\{T(s_1,\ldots, s_{d_0}):\, 1\le s_1,\ldots,s_{d_0}\le \ell_n\big\},
$$
where
$$
T(s_1,\ldots, s_{d_0}):=A_{n,1}^{s_1}\cdots A_{n,d_0}^{s_{d_0}}.
$$
One can check as in the proof of Proposition \ref{p:log1} that when
$\theta_n=\log \ell_n$, the sets $\Delta_n$ satisfy the required conditions.
We note that in this case we also need an additional condition in \eqref{eq:sparse2},
but this condition also follows from $\frac{\log \theta_n}{\log \ell_n}\to \infty$.
\end{proof}


\begin{thebibliography}{99}
\bibitem{AM}
Athreya, J.S.; Margulis, G.A. 
\emph{Logarithm laws for unipotent flows,  I}.  
J. Mod. Dyn. 3 (2009), no. 3, 359--378.
\bibitem{Billingsley}
Billingsley, P. 
\emph{Probability and measure. } Third edition. A Wiley--Interscience Publication. John Wiley \& Sons, Inc., New York, 1995. 
\bibitem{BG1}
Bj\"orklund, M.; Gorodnik, A.
\emph{Central limit theorems for Diophantine approximants.}
 Math. Ann. 374 (2019), no. 3--4, 1371--1437.
\bibitem{BG0} 
Bj\"orklund, M.; Gorodnik, A.
\emph{Counting in generic lattices and higher rank actions}.
arXiv:2101.04931.
\bibitem{BG2}
Bj\"orklund, M.; Gorodnik, A.
\emph{Effective multiple equidistribution of translated measures. } To appear in IMRN, arXiv:2105.05468 
\bibitem{BEG}
Bj\"orklund, M.; Einsiedler, M.; Gorodnik, A.
\emph{Quantitative multiple mixing. } 
J. Eur. Math. Soc. (JEMS) 22 (2020), no. 5, 1475--1529.
\bibitem{Cassels}
Cassels, J. W. S. 
\emph{An introduction to the geometry of numbers. } 
Corrected reprint of the 1971 edition. Classics in Mathematics. Springer-Verlag, Berlin, 1997.
\bibitem{DS}
Davenport, H.; Schmidt, W. M. 
\emph{Dirichlet's theorem on diophantine approximation. II. } 
Acta Arith.  {16} (1969/70),  413--424.
\bibitem{DFL}
Dolgopyat,  D.; Fayad,  B.; Sixu L.,
\emph{Multiple Borel Cantelli Lemma in dynamics and MultiLog law for recurrence}.
arxiv2103.08382.
\bibitem{G} Gallagher, P. 
\emph{Metric simultaneous diophantine approximation.}
J. London Math. Soc. 37 (1962), 387--390.
\bibitem{Gr}
 Greenblatt, M.
 \emph{Resolution of singularities, asymptotic expansions of integrals and related phenomena.}
 J. Anal. Math. 111 (2010), 221--245. 
\bibitem{GL}
Gruber, P. M.; Lekkerkerker, C. G. 
\emph{Geometry of numbers. } Second edition. North-Holland Mathematical Library, 37. North-Holland Publishing Co., Amsterdam, 1987. 
\bibitem{Hirata}
Hirata, M.: 
\emph{Poisson law for Axiom A diffeomorphisms. } 
Ergodic Theory Dynam. Systems 13
(1993),  no. 3, 533--556.
\bibitem{KY}
Kelmer, D.; Yu, S.
\emph{The second moment of the Siegel transform in the space of symplectic lattices. }IMRN 2021, no. 8, 5825--5859.
\bibitem{kir} Kirsebom, M.
\emph{Extreme value distributions for one-parameter actions on homogeneous spaces.} Nonlinearity 33 (2020), no. 3, 1218--1239. 
\bibitem{KM}  Kleinbock, D. Y.; Margulis, G. A. 
\emph{Logarithm laws for flows on homogeneous spaces. } 
Invent. Math. 138 (1999), no. 3, 451--494.
\bibitem{pol} Pollicott, M.
\emph{Limiting distributions for geodesics excursions on the modular surface.}
Spectral analysis in geometry and number theory, 177--185,
Contemp. Math., 484, Amer. Math. Soc., Providence, RI, 2009. 
\bibitem{Rogers}
Rogers, C.A.
\emph{Mean values over the space of lattices. }
Acta Math. 94 (1955), 249--287. 
\bibitem{Ross}
Ross, S.,
\emph{A first course in probability. Second edition. } 
Macmillan Co., New York; Collier Macmillan Ltd., London, 1984. 
\bibitem{SiegelBook}
Siegel, C.L.,
\emph{Lectures on the geometry of numbers.} 
 Springer-Verlag, Berlin, 1989.
\bibitem{Siegel}
Siegel,  C.L.,  
\emph{A mean value theorem in geometry of numbers. } 
Ann. of Math. 46 (1945), 340--347. 
\bibitem{Sinai1}
Sinai Y. G., 
\emph{Some mathematical problems in the theory of quantum chaos}. 
Physica A 163 (1990), 197--204.
\bibitem{Sinai2}
Sinai Y.G. 
\emph{Mathematical problems in the theory of quantum chaos. } 
Geometric Aspects of Functional Analysis.  Lecture Notes in Mathematics,  vol 1469. Springer, Berlin, Heidelberg. 
\bibitem{sul} Sullivan, D.
\emph{Disjoint spheres, approximation by imaginary quadratic numbers, and the logarithm law for geodesics.}
Acta Math. 149 (1982), no. 3--4, 215--237. 
\end{thebibliography}
\end{document}